\newtheorem{Th}{Theorem}[section]
\newtheorem{Pro}[Th]{Proposition}
\newtheorem{Lem}[Th]{Lemma}
\newtheorem{Co}[Th]{Corollary}
\newtheorem{Def}[Th]{Definition}
\theoremstyle{remark}
\newtheorem{Rem}[Th]{Remark}
\newtheorem{Ex}[Th]{Example}
\begin{document}
\title[Simple modules  in the minimal nilpotent orbit]{Simple modules for Affine vertex algebras in the minimal nilpotent orbit}
\author[V. Futorny]{Vyacheslav Futorny}
\address{Instituto de Matem\'atica e Estat\'istica, Universidade de S\~ao Paulo,  S\~ao Paulo, Brazil, and  International Center for Mathematics, SUSTech, Shenzhen, China}\email{vfutorny@gmail.com}
\author[O. A. Hern\'andez Morales]{Oscar Armando Hern\'andez Morales}
\address{Instituto de Matem\'atica e Estat\'istica, Universidade de S\~ao Paulo,  S\~ao Paulo, Brazil} \email{oscarhm@ime.usp.br}
\author[L. E. Ramirez]{Luis Enrique Ramirez}
\address{Universidade Federal do ABC, Santo Andr\'e, Brazil} \email{luis.enrique@ufabc.edu.br}

\begin{abstract}
 We explicitly construct,  in terms of Gelfand--Tsetlin tableaux, a new family of simple positive energy representations for the simple  affine vertex algebra $V_k(\mathfrak{sl}_{n+1})$  in the minimal  nilpotent orbit of $\mathfrak{sl}_{n+1}$.
These representations are  quotients of induced modules over the affine Kac-Moody algebra $\widehat{\mathfrak{sl}}_{n+1} $ and include in particular all admissible simple highest weight modules  
 and all simple modules induced from $\mathfrak{sl}_2$. Any such simple module in the minimal  nilpotent orbit  has bounded weight multiplicities. 
\end{abstract}

\subjclass[2010]{Primary 17B10, 17B69}
\keywords {Affine vertex algebra, localization functors, Gelfand--Tsetlin modules, tableaux realization.}

\maketitle

 \section{Introduction}
 Relaxed highest weight modules for affine Kac-Moody algebras attract a considerable interest due to their connection 
 to the representation theory of conformal vertex algebras and conformal field theories, cf.   
 \cite{FST1998}, \cite{AM95}, \cite{AM07}, \cite{AP08}, \cite{AM09}, \cite{Adamovic2016}, \cite{Auger-Creutzig-Ridout2018} and references therein. In particular, the importance of \emph{relaxed highest weight} admissible modules for conformal field theory was shown in \cite{R}, \cite{CR1}, \cite{CR2}, see also \cite{RW}, \cite{FKR}.
 Using the Zhu's functor the study of positive energy representations of simple affine vertex algebras reduces to the representations of the underlined finite-dimensional Lie algebra. This allows to construct new families of simple  
 representations of admissible vertex algebras. This has been exploited in \cite{AFR17}, where new families of simple 
 modules were constructed for the affine vertex algebra of $\mathfrak{sl}_n$ and a  complete classification of all simple relaxed highest weight representations  with finite-dimensional   weight spaces in the case $n=3$ was obtained. Further on, this approach was developed in \cite{KR19a} and \cite{KR19b} using Fernando-Mathieu classification of simple  weight representations with finite weight multiplicities of finite-dimensional simple Lie algebras. 
These papers provided an algorithm for classifying all admissible relaxed highest weight modules starting from admissible highest weight modules.  The latter modules were classified in \cite{KacWak89} and \cite{Ara16}.
However, explicit
construction of  admissible relaxed highest weight representations beyond their classification is rather difficult. Moreover, the algorithm is limited to the modules with finite weight multiplicities.  We refer to the recent paper \cite{FK20}, where the localization 
and the Wakimoto functors were used to construct \emph{relaxed Wakimoto modules} for the universal and simple affine vertex algebras with infinite weight multiplicities.

Current paper aims to give an explicit
construction of admissible representations  extending \cite{AFR17}, where the Gelfand--Tsetlin tableau realization was used 
to describe  admissible families in type $A$.  Such realization provides a basis and explicit formulas for the action of the Lie algebra.  In this paper we describe several new families of simple positive energy weight representations of the admissible affine vertex algebra $V_k(\mathfrak{sl}_{n})$. These families include all  simple highest weight modules in the minimal nilpotent orbit described in \cite{Ara15}.   They also include simple modules in the minimal nilpotent orbit induced from an $\mathfrak{sl}_2$-subalgebra. All constructed modules 
  have bounded weight multiplicities.
Our approach is based on the theory of \emph{relation} Gelfand--Tsetlin modules developed in \cite{FRZ19}, where
  explicit tableau basis was constructed for different classes of simple Gelfand--Tsetlin modules for $\mathfrak{gl}_n$.

Our first main result is Theorem \ref{hw module}, which provides a tableau realization for simple highest weight modules, namely we give necessary and sufficient conditions for such modules to be relation modules with respect to the standard Gelfand--Tsetlin subalgebra.
In Section \ref{Section: Induced relation modules} we apply twisted localization to relation Gelfand--Tsetlin modules. 
We establish the following result (cf. Theorem \ref{twistedE21}).

\
 
  \begin{Th}\label{thm-main-1} Let $\mathfrak{g}=\mathfrak{sl}_{n+1}$, $\Gamma$ the standard Gelfand--Tsetlin subalgebra (corresponding to the chain of embeddings starting from the upper left corner), $f=E_{21}$,
  $M$  a $\Gamma$-relation module with an injective action of $f$. Then the twisted localization $D_f^{x} M$ of $M$ is a $\Gamma$-relation Gelfand--Tsetlin module for any $x\in \mathbb C$.
\end{Th}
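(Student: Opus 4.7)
The plan is to equip $D_f^x M$ with an explicit tableau basis obtained from that of $M$ by shifting the single Gelfand--Tsetlin entry on which $f = E_{21}$ acts by the complex parameter $x$, and then to verify the relation-module axioms of \cite{FRZ19} for this new basis. First I would fix a tableau realisation $\{T(L) : L \in B\}$ of $M$ coming from its $\Gamma$-relation structure and record the classical fact that, in the standard chain $\mathfrak{gl}_1 \subset \mathfrak{gl}_2 \subset \cdots \subset \mathfrak{gl}_{n+1}$, the lowering operator $E_{21}$ shifts only the entry of the Gelfand--Tsetlin row attached to the embedding $\mathfrak{gl}_1 \hookrightarrow \mathfrak{gl}_2$. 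Because $f$ acts injectively on $M$, the ordinary localization $D_f M = M[f^{-1}]$ is a well-defined $\Gamma$-weight module whose basis is indexed by tableaux obtained from $B$ by allowing that distinguished entry to undergo any integer shift, positive or negative.

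Second, I would invoke Mathieu's twisted-localization construction: as a vector space $D_f^x M = D_f M$, but with the action $g \cdot v := \Theta_f^x(g)\, v$, where
\[
\Theta_f^x(g) \;=\; \sum_{k \geq 0} \binom{x}{k}\, \mathrm{ad}(f)^k(g)\, f^{-k}.
\]
The sum is finite on each $g \in \mathfrak{g}$ since $\mathrm{ad}(f)$ is nilpotent on $\mathfrak{sl}_{n+1}$, and it makes sense on $D_f M$ because $f$ is invertible there. Translating $\Theta_f^x(E_{ij})$ through the classical Gelfand--Tsetlin formulas, I would verify that its action on a basis element is obtained from the untwisted formula by the single substitution of adding $x$ to the entry of the $\mathfrak{gl}_1$ row, uniformly across all tableaux in $B$.

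Third, I would set $T(L)_x := f^x T(L)$ and introduce the shifted tableau set $B_x$ obtained from $B$ by adding $x$ to the relevant entry of every tableau. The $\Gamma$-weight of $T(L)_x$ is the image of the weight of $T(L)$ under the same substitution, so $\Gamma$ still acts with a generalised weight-space decomposition, and the shifted Gelfand--Tsetlin formulas automatically give $D_f^x M$ a $U(\mathfrak{sl}_{n+1})$-module structure with basis $\{T(L)_x\}$. It then remains to verify that $B_x$ is a relation set in the sense of \cite{FRZ19}: the defining preorder is governed by the integer parts of differences between entries of each tableau, and these integer parts are preserved when the same complex constant $x$ is added to a single distinguished entry throughout $B$.

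The main obstacle is the bookkeeping in this last step: one must show that adding $x$ to a fixed entry throughout a valid relation set produces another valid relation set, i.e.\ that no forbidden relation configurations are introduced and that the twisted Gelfand--Tsetlin coefficients remain well defined on every element of $B_x$. I expect this to follow from the observation that the comparisons in the preorder which do not involve the shifted entry are untouched, while those that do involve it are translated by the same complex constant and therefore define the same integer-part inequalities. This combinatorial compatibility, together with the explicit twisted Gelfand--Tsetlin action from the previous step, will identify $D_f^x M$ as a $\Gamma$-relation Gelfand--Tsetlin module.
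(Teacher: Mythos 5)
Your plan is essentially the paper's own proof: the paper shows that injectivity of $E_{21}$ excludes the relations $((1,1);(2,1))$ and $((1,1);(2,2))$ (Lemma \ref{inj-surj-E21}), identifies the untwisted localization with the relation module in which the $(1,1)$-entry is freed (Lemma \ref{localiz-E21}), computes the twisted action on tableaux as the Gelfand--Tsetlin formulas with $w_{11}$ replaced by $w_{11}+x$ (Lemma \ref{lem-formulas}), and then sends $T(w)^x\mapsto T(w+x\delta^{11})$ to obtain an isomorphism with $V_{\mathcal{D}}(T(v+x\delta^{11}))$ --- exactly your three steps. Only your closing justification should be rephrased: the shifted family is a relation realization not because ``integer parts of differences are preserved'' (for $x\notin\mathbb{Z}$ the differences $w_{11}+x-w_{2j}$ do not retain their integrality), but because after localization the governing relation set $\mathcal{D}$ contains no relation involving the vertex $(1,1)$ at all, so adding $x$ to that entry imposes no constraint.
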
  

\

Using the theorem above we obtain our second main result -  Theorem \ref{sl2 Induced}, where we classify all simple $\Gamma$-relation Gelfand--Tsetlin modules induced from $\mathfrak{sl}_2$-subalgebra generated by  $E_{12}$ and $E_{21}$. Also, in Theorem \ref{Lem: Induced by sl(m)} we obtain an explicit construction of a family of simple $\mathfrak{sl}_{n+1}$-modules, which  are parabolically induced from cuspidal $\mathfrak{a}$-modules, where $\mathfrak{a}\simeq \mathfrak{sl}_m$ is part of the $\Gamma$-flag,  $m=2, \ldots, n$. 
Further, localization of simple highest weight relation modules is considered in Proposition \ref{localiz-F}
for a multiplicative set generated by any number of commuting generators of the form $E_{k1}$.

Finally, in 
 Section  \ref{Section: Minimal orbit} we establish our third main result - explicit description of simple admissible highest weight $\mathfrak{sl}_{n+1}$-modules and modules induced from 
$\mathfrak{sl}_2$ subalgebra in  the unique minimal non-trivial nilpotent orbit 
 $\mathbb{O}_{min}$  of $\mathfrak{sl}_{n+1}$. Let
$$
k+n=\frac{p}{q}-1,\quad  p> n,\ q\geq 1 \mbox{ and }\ (p,q)=1,
$$
$a\in \{1,2,\dots ,q-1\}$ and 
$
\lambda_a=\lambda-\frac{ap}{q}\varpi_1$, where 
$\lambda=\left(\lambda_1, \ldots, \lambda_n\right)
$ is a weight of $\mathfrak{sl}_{n+1}$ with
 $\lambda_i\in\mathbb{Z}_{\geq 0}$, for all $ i=1,\dots, n$, $\lambda_{1}+\ldots+\lambda_n< p-n$. Then we have 
 (cf. Theorem \ref{Pro: HWM} and Theorem \ref{inducedsl2minimal}):

\
 
  \begin{Th}\label{thm-main-3} 
  \begin{itemize}
 \item[(i)] If $\Gamma$  is the standard Gelfand--Tsetlin subalgebra then 
  the simple highest weight module $L(\lambda_a)$ is a bounded $\Gamma$-relation Gelfand--Tsetlin $\mathfrak{sl}_{n+1}$-module. Moreover, all simple admissible highest weight  modules in the minimal nilpotent orbit are bounded $\Gamma$-relation Gelfand--Tsetlin modules.
  \item[(ii)] Let $\pi$ be a basis of the root system of $\mathfrak g$, $\mathfrak b(\pi)$  the corresponding Borel subalgebra  of $\mathfrak g$,
  $\beta$  a positive (with respect to $\pi$) root of $\mathfrak g$, 
  $\rho_{\pi}$ the half-sum of positive (with respect to $\pi$) roots.
 Let $L_{\mathfrak b(\pi)}(\lambda)$ be an admissible simple $\mathfrak b(\pi)$-highest weight $\mathfrak g$-module  in the minimal  orbit,  such that  $\left<\lambda+\rho_{\pi},\beta^{\vee} \right> \notin\mathbb{Z}$, and $f=f_{\beta}$. Denote by $A_{\pi, \beta}$ the set of all $x\in \mathbb{C}\setminus \mathbb{Z}$ for which   $x+\left<\lambda+\rho_{\pi},\beta^{\vee} \right> \notin\mathbb{Z}$.
 Then the twisted localization modules $D^x_{f}L_{\mathfrak b(\pi)}(\lambda)$, $x\in A_{\pi, \beta}$, exhaust all  simple $\mathfrak{sl}_2$-induced admissible modules in the minimal orbit. Moreover,
    there exists a flag ${\mathcal F}$ such that $D^x_{f}L_{\mathfrak b(\pi)}(\lambda)$ is  $\Gamma_{\mathcal F}$-relation Gelfand--Tsetlin $\mathfrak g$-module.
  \end{itemize}
 \end{Th}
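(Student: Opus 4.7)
The plan is to combine three ingredients already developed in the paper: Arakawa's description of the simple admissible highest weight modules in $\mathbb{O}_{min}$ from \cite{Ara15}, the relation-module criterion of Theorem \ref{hw module} for simple highest weight modules with respect to the standard Gelfand--Tsetlin subalgebra, and the stability of the relation property under twisted localization (Theorem \ref{thm-main-1}), applied after suitably rechoosing the flag so that $f_{\beta}$ is identified with an ``$E_{21}$''-type element.

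For part (i), I would start from the explicit form of $\lambda_a=\lambda-\tfrac{ap}{q}\varpi_1$, in which the only non-integer contribution is a rational shift along $\varpi_1$ while the remaining coordinates are non-negative integers satisfying $\lambda_1+\cdots+\lambda_n<p-n$. Writing down the Gelfand--Tsetlin highest weight tableau of $L(\lambda_a)$, the non-integer shift is confined to the top row, so the combinatorial hypotheses of Theorem \ref{hw module} reduce to a finite list of inequalities and congruences that can be verified by direct inspection. Boundedness then follows because, after the relations imposed by Theorem \ref{hw module} are taken into account, only finitely many tableau entries remain free in each weight space, uniformly in the weight. The ``moreover'' clause, that every admissible highest weight simple in $\mathbb{O}_{min}$ arises this way, is an immediate consequence of the classification of admissible weights supported on the minimal orbit (\cite{KacWak89}, \cite{Ara15}).

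For part (ii), I would first use that the hypothesis $\langle\lambda+\rho_{\pi},\beta^{\vee}\rangle\notin\mathbb{Z}$ forces $f_{\beta}$ to act injectively on $L_{\mathfrak{b}(\pi)}(\lambda)$, so the twisted localization $D^{x}_{f}L_{\mathfrak{b}(\pi)}(\lambda)$ is well defined; the membership $x\in A_{\pi,\beta}$ then prevents any Weyl translate of the shifted weight from becoming integral dominant, hence keeps the module simple and genuinely $\mathfrak{sl}_2$-induced. That these exhaust all simple $\mathfrak{sl}_2$-induced admissible modules in $\mathbb{O}_{min}$ I would establish by combining Theorem \ref{sl2 Induced} with the fact that, inside the minimal orbit, every such module belongs to a coherent family containing an admissible highest weight module, so it must be a twisted localization of some $L_{\mathfrak{b}(\pi)}(\lambda)$ from part (i). For the relation-module conclusion, pick a flag $\mathcal{F}$ whose first step embeds $\mathfrak{sl}_2\hookrightarrow\mathfrak{sl}_{n+1}$ via the triple $(e_{\beta},h_{\beta},f_{\beta})$; then $f=f_{\beta}$ plays the role of $E_{21}$ in $\Gamma_{\mathcal{F}}$, and Theorem \ref{thm-main-1} (applied in that flag) immediately gives that $D^{x}_{f}L_{\mathfrak{b}(\pi)}(\lambda)$ is a $\Gamma_{\mathcal{F}}$-relation module.

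The most delicate step is the exhaustion claim in (ii): we must verify that the quadruples $(\pi,\beta,\lambda,x)$ with $x\in A_{\pi,\beta}$ parametrize \emph{every} simple $\mathfrak{sl}_2$-induced admissible module in $\mathbb{O}_{min}$, not just a subclass, and do so with a clear understanding of the redundancies (different $\pi$ or $\beta$ producing isomorphic modules). A secondary but non-trivial task is to justify that the ``rerooted'' flag $\mathcal{F}$ indeed produces a Gelfand--Tsetlin subalgebra to which Theorem \ref{thm-main-1} applies verbatim; this should follow by conjugation from the standard-flag case, but it needs to be written out carefully to ensure that the $\Gamma_{\mathcal{F}}$-relation structure really is preserved by the twisted localization.
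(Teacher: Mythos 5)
Your overall strategy matches the paper's (part (i) via the relation--module criterion for highest weight modules, part (ii) via Theorem \ref{sl2 Induced} and the twisted localization theorem), but there is a genuine gap in part (ii), precisely at the step you dismiss as ``pick a flag $\mathcal F$ whose first step embeds $\mathfrak{sl}_2$ via $(e_\beta,h_\beta,f_\beta)$; then Theorem \ref{thm-main-1} immediately gives the conclusion.'' Theorem \ref{thm-main-1} has a hypothesis: the module being localized must already be a $\Gamma_{\mathcal F}$-relation module for that same flag, with $f$ acting injectively. Whether $L_{\mathfrak b(\pi)}(\lambda)$ is a relation module with respect to a given (conjugated) Gelfand--Tsetlin subalgebra is a flag-dependent arithmetic condition on the weight (Theorem \ref{hw module}, Corollary \ref{prop:relationbounded}, Corollary \ref{cor-outras Borel}); it fails for many flags even for bounded modules (cf.\ the remark that $L(-\omega_i)$ is bounded but not a $\Gamma$-relation module). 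The entire technical content of the paper's proof of Theorem \ref{inducedsl2minimal} is the construction of a flag that works: a case analysis according to whether $\lambda$ has one non-integral simple coordinate or two consecutive ones, and where $\beta$ sits relative to them. In the easy case a suitable basis $\pi'$ with $\beta$ as first root is produced and Corollary \ref{cor-diff-Borel-same} applies; in the hard case (two consecutive non-integral roots, $\beta$ reaching into the integral region) the paper needs Lemma \ref{lem-key} and Corollary \ref{lem-diff-Borel} b), c), i.e.\ an explicit new tableau realization of $L(\lambda)$ for the element $w_i=s_{i-1}s_is_{i-2}s_{i-1}\cdots s_2s_3s_1s_2$, before Theorem \ref{thm-main-1} can be invoked. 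Your proposal simply assumes the required $\Gamma_{\mathcal F}$-relation realization exists for any rerooted flag, so the ``Moreover, there exists a flag $\mathcal F$\dots'' clause --- and with it part a) of the exhaustion --- is not actually proved.

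Two smaller points. For the exhaustion itself, your appeal to coherent families is vaguer than what is needed: the paper gets it from Theorem \ref{sl2 Induced} (every simple $\mathfrak{sl}_2$-induced relation module is $D_f^x L(\lambda)$ with explicitly constrained parameters) together with the computation of which induced modules are admissible in the minimal orbit (Proposition \ref{prop-sl2}, Corollary \ref{minimalinducedsl2}); with a coherent-family argument you would still have to track annihilators, since twisted localization only gives $\operatorname{Ann}M\subset\operatorname{Ann}D_F^{\bf x}M$ in general, so admissibility of the localized module in the \emph{same} orbit needs the dominance/annihilator statements (Corollary \ref{Localization annihilator}). In part (i), the ``moreover'' clause is not automatic from the classification alone: the admissible highest weight modules in the minimal orbit are the various $L(w\cdot\lambda_a)$ sharing the annihilator $J_{\lambda_a}$, and one must check that each of these Weyl-dot translates satisfies one of the conditions of Corollary \ref{prop:relationbounded} (they land in different cases a), b), e), as the $\mathfrak{sl}_3$ and $\mathfrak{sl}_4$ examples show); this is routine but has to be verified, and boundedness is most cleanly obtained from Mathieu's criterion cited there rather than from your informal ``finitely many free entries'' count.
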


\

\noindent{\bf Acknowledgements.}  V.F. and L.E.R. gratefully acknowledge the
hospitality and excellent working conditions of the International Center for Mathematics of SUSTech,  where part of this work was completed. V.F. is supported in part by the CNPq (304467/2017-0) and by the Fapesp grant (2018/23690-6); O.H. is supported by the Coordena\c{c}\~ao de Aperfei\c{c}oamento de Pessoal de N\'ivel Superior - Brasil (CAPES) - Finance Code 001; L.E.R. is supported  by the Fapesp grant (2018/17955-7)

\section{Preliminaries}\label{section:Preliminaries}

\subsection{Weight modules}

Let $\mathfrak g$ be a simple complex finite dimensional Lie algebra, $\mathfrak h$ a fixed Cartan subalgebra,
$U(\mathfrak g)$ the universal enveloping algebra of $\mathfrak g$, and  $W$ the Weyl group of $\mathfrak g$. 
By $\Delta$ we denote the root system of $\mathfrak g$ and by
  $\Delta_+$ the set of positive roots with respect to a fixed basis $\pi$ of $\Delta$.
Put $Q=\sum\limits_{\alpha\in \Delta}\mathbb Z \alpha$ for the root lattice and 
$Q^\vee=\sum\limits_{\alpha\in \Delta}\mathbb Z \alpha^{\vee}$  for the coroot lattice,
where $\alpha^{\vee}=2\alpha/(\alpha,\alpha)$. Let $\rho_{\pi}$ to be the half sum of positive roots. 

By $\mathfrak{b}(\pi)$ we denote the standard Borel subalgebra of $\mathfrak{g}$ corresponding to the set $\pi$. In addition to the standard Borel subalgebra of $\mathfrak{g}$ we  also consider the standard parabolic subalgebras of $\mathfrak{g}$. For a subset $\Sigma$ of $\pi$  denote by $\Delta_\Sigma$ the root subsystem in $\mathfrak{h}^*$ generated by $\Sigma$. Then the standard parabolic subalgebra $\mathfrak{p}_\Sigma(\pi)$ of $\mathfrak{g}$ associated to $\pi$ and $\Sigma$ is defined as $\mathfrak{p}_\Sigma(\pi) = \mathfrak{l}_\Sigma \oplus \mathfrak{u}^{+}_\Sigma$ with nilradical $\mathfrak{u}^{+}_\Sigma := \bigoplus_{\alpha \in \Delta_+ \setminus \Delta_\Sigma} \mathfrak{g}_\alpha$, opposite nilradical $\mathfrak{u}^{-}_\Sigma := \bigoplus_{\alpha \in \Delta_+ \setminus \Delta_\Sigma} \mathfrak{g}_{-\alpha}$,  and  Levi subalgebra $\mathfrak{l}_\Sigma$ defined by
\begin{align*}
  \mathfrak{l}_\Sigma := \mathfrak{h} \oplus \bigoplus_{\alpha \in \Delta_\Sigma} \mathfrak{g}_\alpha.
\end{align*}
Moreover, we have  the corresponding triangular decomposition $\mathfrak{g}= \mathfrak{u}^{-}_\Sigma \oplus \mathfrak{l}_\Sigma \oplus \mathfrak{u}^{+}_\Sigma$. Note that if $\Sigma =\emptyset$ then $\mathfrak{p}_\Sigma(\pi) = \mathfrak{b}(\pi)$, and if $\Sigma = \pi$ then $\mathfrak{p}_\Sigma(\pi) = \mathfrak{g}$.

Recall that a $\mathfrak{g}$-module (respectively $\mathfrak{l}_\Sigma$-module)    $M$ is called  \emph{weight} if $\mathfrak{h}$ is diagonalizable on $M$.
 Let $V$ be a simple weight $\mathfrak{l}_\Sigma$-module. Set $\mathfrak{p}:=\mathfrak{p}_\Sigma(\pi)$ and consider $V$ as a $\mathfrak{p}$-module with trivial action of the nilradical $\mathfrak{u}^{+}_\Sigma$.
The \emph{generalized Verma} $\mathfrak{g}$-module $M^\mathfrak{g}_\mathfrak{p}(\Sigma,V)$ is the induced module 
\begin{align*}
  M^\mathfrak{g}_\mathfrak{p}(\Sigma, V) = {\mathop {\rm Ind}}^\mathfrak{g}_\mathfrak{p} V = U(\mathfrak{g}) \otimes_{U({\mathfrak{p}})} V .
\end{align*}

 \noindent
The module  $M^\mathfrak{g}_\mathfrak{p}(\Sigma, V)$ has a  unique maximal  submodule and a unique simple quotient $L^\mathfrak{g}_\mathfrak{p}(\Sigma, V)$. We write $M(\lambda)$ for the Verma module $M^\mathfrak{g}_{\mathfrak{b}(\pi)}(\emptyset, \mathbb{C}v_\lambda)$ and $L(\lambda)$ for $L^\mathfrak{g}_{\mathfrak{b}(\pi)}(\emptyset, \mathbb{C}v_\lambda)$ when it is clear which Borel subalgebra is meant. 

Let $M$ be a weight $\mathfrak{g}$-module.  For $\lambda\in \mathfrak h^*$ the subspace $M_{\lambda}$ of those $v\in V$ such that $hv=\lambda(h)v$ is the \emph{weight subspace} of weight $\lambda$. The dimension of $M_{\lambda}$ is the \emph{multiplicity} of weight $\lambda$. We say that a weight module is \emph{bounded} if all weight multiplicities are uniformly bounded. 
A weight $\mathfrak{g}$-module  $M$ is \emph{torsion free} if for any nonzero weight subspace $M_{\lambda}$ and any root $\alpha$, a nonzero root vector $X\in \mathfrak{g}_{\alpha}$ defines an isomorphism 
 between $M_{\lambda}$ and $M_{\lambda+\alpha}$. 
A weight module  $M$ is \emph{cuspidal} if $M$ is finitely generated torsion free module with finite  weight multiplicities.

\

\begin{Pro}\cite[Corollary 1.4]{Mat00}\label{cuspidal}
For a simple weight $\mathfrak{g}$-module $M$ with finite weight multiplicities, the following assertions are equivalent:

\begin{enumerate}[a)]
    \item $M$ is cuspidal;
    \item $M$ is torsion free;
    \item The support of $M$ is exactly one $Q$-coset.
\end{enumerate}
\end{Pro}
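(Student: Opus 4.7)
The plan is to prove the cycle $(a)\Rightarrow (b)\Rightarrow (c)\Rightarrow (a)$, noting that because $M$ is simple with finite weight multiplicities, finite generation is automatic and cuspidality reduces to torsion-freeness; the whole content sits in $(b)\Leftrightarrow (c)$.

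For $(b)\Rightarrow (c)$, torsion-freeness gives isomorphisms $M_{\mu}\xrightarrow{\sim} M_{\mu+\alpha}$ for every $\mu\in\mathrm{Supp}(M)$ and every root $\alpha$, so $\mathrm{Supp}(M)$ is $Q$-translation invariant. Simplicity gives $M=U(\mathfrak g)v$ for any nonzero weight vector $v$ of weight $\mu$, and PBW monomials shift weight by elements of $Q$, so $\mathrm{Supp}(M)\subseteq \mu+Q$. Together, $\mathrm{Supp}(M)=\mu+Q$.

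For $(c)\Rightarrow (b)$, I fix a nonzero $X\in\mathfrak g_{\alpha}$ and set
\[
N_{X}:=\{\,v\in M\,:\,X^{k}v=0\text{ for some }k\geq 0\,\}.
\]
The binomial identity $X^{N}(Yv)=\sum_{k}\binom{N}{k}(\mathrm{ad}\,X)^{k}(Y)\,X^{N-k}v$, together with local nilpotence of $\mathrm{ad}\,X$ on $\mathfrak g$, shows that $N_{X}$ is a $\mathfrak g$-submodule; simplicity then forces $N_{X}\in\{0,M\}$. When $N_{X}=0$ and, symmetrically, $N_{Y}=0$ for a companion $Y\in\mathfrak g_{-\alpha}$, both $X$ and $Y$ act injectively on every weight space, and finite-dimensionality of $M_{\mu}$ and $M_{\mu+\alpha}$ (both nonzero by $(c)$) promotes injectivity to isomorphism, yielding torsion-freeness.

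The main obstacle is ruling out $N_{X}=M$, i.e.\ $X$ acting locally nilpotently on $M$. The plan is to restrict to the $\alpha$-string $M^{(\alpha)}:=\bigoplus_{k\in\mathbb{Z}}M_{\mu+k\alpha}$, which is an $\mathfrak{sl}_{2}$-submodule under the triple $(X,H,Y)$. Local nilpotence of $X$ together with finite-dimensionality of each $M_{\mu+k\alpha}$ forces $M^{(\alpha)}$ to admit a filtration with highest-weight $\mathfrak{sl}_{2}$-subquotients, and counting highest weight vectors at each level (using finite multiplicities) bounds the set of such highest weights from above, so the support of $M^{(\alpha)}$ must be bounded above in the $\alpha$-direction. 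This contradicts hypothesis $(c)$, which demands $\mu+k\alpha\in\mathrm{Supp}(M)$ for every $k\in\mathbb{Z}$. This multiplicity-control step is the technical heart of the argument and the place where the finite-multiplicity hypothesis genuinely enters, whereas everything else is a formal consequence of simplicity and the triangular decomposition; it is exactly the content of Mathieu's structural input.
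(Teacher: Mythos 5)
The paper itself gives no argument for this proposition: it is imported verbatim from Mathieu \cite[Corollary 1.4]{Mat00}, so your proposal can only be judged on its own merits. Its architecture is the standard Fernando--Mathieu one and is sound: (a)$\Rightarrow$(b) is immediate from the definition of cuspidal, your (b)$\Rightarrow$(c) argument is correct, the dichotomy ``each root vector acts injectively or locally nilpotently'' via the submodule $N_X$ is right (the binomial identity plus ad-nilpotence of a root vector does make $N_X$ a submodule), and once both $X\in\mathfrak g_{\alpha}$ and $Y\in\mathfrak g_{-\alpha}$ act injectively, finite multiplicities together with (c) upgrade injectivity to isomorphisms on every weight space; simplicity gives finite generation, so torsion-freeness gives cuspidality.

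The one point where what you wrote is not yet a proof is the exclusion of $N_X=M$. As phrased --- ``counting highest weight vectors at each level bounds the set of such highest weights from above'' --- the inference does not go through: finite multiplicities bound the number of primitive vectors of any \emph{fixed} weight, but say nothing about how high up the $\alpha$-string primitive vectors may occur. The counting has to be done at one fixed low weight. Concretely: if $X$ were locally nilpotent while the string support is all of $\mu+\mathbb{Z}\alpha$, you could choose primitive vectors $u_1,u_2,\dots$ at positions $k_1<k_2<\cdots\to\infty$ along the string; putting $W_m=\sum_{i\le m}U(Y)u_i$, each quotient $W_m/W_{m-1}$ is a nonzero highest weight $\mathfrak{sl}_2$-module with highest weight at position $k_m$ (nonzero because the support of $W_{m-1}$ stops at position $k_{m-1}<k_m$), and its support is either $\{\mu+(k_m-j)\alpha:\ j\ge 0\}$ or, in the finite-dimensional case, an interval both of whose endpoints run off to $\pm\infty$ with $k_m$. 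Hence any fixed weight $\mu+k\alpha$ lies in the support of all but finitely many of these subquotients, and since weight multiplicities add along the filtration, $\dim M_{\mu+k\alpha}$ would be infinite --- contradicting the hypothesis. (Alternatively, the submodules $U(Y)u_m$ lie in eigenspaces of the Casimir of the $\mathfrak{sl}_2$-triple for pairwise distinct eigenvalues, so their sum is direct and the same count applies.) With this repair your argument is complete and elementary; it is exactly the content of the cited Corollary 1.4 of \cite{Mat00}, which is where the finite-multiplicity hypothesis genuinely enters, as you correctly anticipated.
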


A weight module satisfying  Proposition \ref{cuspidal}, c) is called \emph{dense}.  

\

\subsection{Annihilators}

Recall that  $\lambda \in \mathfrak{h}^*$ is \emph{dominant} if $\left<\lambda+\rho_{\pi},\alpha^{\vee} \right> \notin\mathbb{Z}_{< 0}$, for all $\alpha \in \Delta_+$. Similarly $\lambda$ is \emph{antidominant} if $\left<\lambda+\rho_{\pi},\alpha^{\vee} \right> \notin\mathbb{Z}_{> 0}$ for all $\alpha \in \Delta_+$.
Also, $\lambda$ is \emph{regular}  if $\left<\lambda+\rho_{\pi},\alpha^{\vee} \right> \neq 0$ for all $\alpha \in \Delta$.   If $\mathcal{Z}$ is the center of $U(\mathfrak{g})$ and $\lambda\in\mathfrak{h}^{*}$,  then $\chi_{\lambda}:\mathcal{Z}\rightarrow \mathbb{C}$ stands for the central character of $L(\lambda)$.

For an associative algebra $A$ and an $A$-module $M$, the \emph{annihilator} of $M$ in $A$ is the ideal $\{a \in A\  | \ am = 0 \text{ for all } m \in M \}$ and will be denoted by $Ann_A (M) $.

\

 \begin{Pro}\cite[8.5.8]{Dix77}\label{max-ann}
If $\lambda\in \mathfrak{h}^*$ is dominant, then the annihilator of $L(\lambda)$ is the unique maximal two-sided ideal of $U(\mathfrak{g})$ containing   $U(\mathfrak{g})\ker \chi_{\lambda}$. 
 \end{Pro}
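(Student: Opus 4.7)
The plan is to derive this from Duflo's classification of primitive ideals together with a dominance argument for annihilators of highest weight modules. First, since the center $\mathcal{Z}$ acts on $L(\lambda)$ via the scalar $\chi_\lambda$, we have $U(\mathfrak{g})\ker\chi_\lambda \subseteq \mathrm{Ann}_{U(\mathfrak{g})}(L(\lambda))$, and the latter is a proper two-sided ideal because $L(\lambda) \neq 0$. So the set of proper two-sided ideals containing $U(\mathfrak{g})\ker\chi_\lambda$ is nonempty, and it suffices to show that every maximal member of this set coincides with $\mathrm{Ann}(L(\lambda))$.

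Let $J$ be any maximal two-sided ideal of $U(\mathfrak{g})$ with $J \supseteq U(\mathfrak{g})\ker\chi_\lambda$. Since $U(\mathfrak{g})$ is Noetherian, $U(\mathfrak{g})/J$ is a simple Noetherian ring; by Goldie's theorem it admits a faithful simple module, so $J$ is primitive. By Duflo's theorem, every primitive ideal of $U(\mathfrak{g})$ is the annihilator of some simple highest weight module, so $J = \mathrm{Ann}(L(\mu))$ for some $\mu \in \mathfrak{h}^*$. The constraint $J \supseteq U(\mathfrak{g})\ker\chi_\lambda$ forces $\chi_\mu = \chi_\lambda$, and then by Harish-Chandra's description of central characters we have $\mu = w \cdot \lambda$ for some $w \in W$.

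The crux is then the inclusion $\mathrm{Ann}(L(w\cdot\lambda)) \subseteq \mathrm{Ann}(L(\lambda))$ for every $w \in W$ when $\lambda$ is dominant, which by the previous step forces $J \subseteq \mathrm{Ann}(L(\lambda))$ and hence equality by maximality of $J$; this simultaneously proves both maximality and uniqueness. To establish this inclusion, I would use translation functors: dominance of $\lambda$ ensures that $\lambda$ sits in the closure of the fundamental Weyl chamber (shifted by $-\rho$), and one shows that there is a finite-dimensional $\mathfrak{g}$-module $F$ such that $L(\lambda)$ occurs as a composition factor of the block projection of $L(w\cdot\lambda)\otimes F$. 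Combined with the Jantzen--Joseph fact that annihilators of subquotients of $M \otimes F$ are controlled by $\mathrm{Ann}(M)$ (via Harish-Chandra bimodules), this yields $\mathrm{Ann}(L(\lambda)) \supseteq \mathrm{Ann}(L(w\cdot\lambda))$.

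The main obstacle is the translation-functor step: showing that $L(\lambda)$ appears inside some tensor translate of $L(w\cdot\lambda)$ requires the full theory of translation across walls of Weyl chambers, which behaves subtly in the singular case (when $\lambda$ is dominant but not regular, some wall-crossings collapse composition factors). The dominance hypothesis is exactly what makes $\lambda$ the ``target'' of all wall-crossings in its orbit, so the inclusions go the right way; without it, different $\mathrm{Ann}(L(w \cdot \lambda))$ could be incomparable and the unique maximal ideal would fail to exist.
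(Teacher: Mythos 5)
Your reduction is sound in its first two steps: a maximal two-sided ideal $J\supseteq U(\mathfrak{g})\ker\chi_{\lambda}$ is primitive (though you do not need Goldie here -- a simple unital ring is primitive because any simple module over it is automatically faithful), and Duflo plus Harish--Chandra then give $J=\operatorname{Ann}_{U(\mathfrak{g})}L(w\cdot\lambda)$ for some $w\in W$. The gap is in the crux step $\operatorname{Ann}_{U(\mathfrak{g})}L(w\cdot\lambda)\subseteq \operatorname{Ann}_{U(\mathfrak{g})}L(\lambda)$. Your mechanism requires a finite-dimensional $F$ with $L(\lambda)$ a composition factor of $L(w\cdot\lambda)\otimes F$. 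But every weight of $L(w\cdot\lambda)\otimes F$ lies in $w\cdot\lambda+P$, where $P$ is the weight lattice, while the weights of $L(\lambda)$ lie in $\lambda+Q$, and $\lambda-w\cdot\lambda=(\lambda+\rho_{\pi})-w(\lambda+\rho_{\pi})$ need not lie in $P$ when $\lambda$ is not integral: already for $\mathfrak{sl}_2$ with $\left<\lambda+\rho_{\pi},\alpha^{\vee}\right>=p/q$, $q\geq 3$, one has $\lambda-s\cdot\lambda\notin P$, so no such $F$ exists for any choice of $F$. Since the whole point of this Proposition in the paper is to handle the non-integral dominant weights $\lambda-\frac{ap}{q}\varpi_1$ of the admissible setting, and $J$ above ranges over $\operatorname{Ann}L(w\cdot\lambda)$ for \emph{all} $w\in W$ (not just the integral Weyl group), your argument cannot reach exactly the cases that matter.

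Even in the integral case, the ``Jantzen--Joseph fact'' as you state it -- that annihilators of subquotients of $M\otimes F$ are controlled by $\operatorname{Ann}(M)$ -- is false in its naive reading: take $M$ the trivial module and $N$ a nontrivial simple constituent of $M\otimes F\simeq F$; then $\operatorname{Ann}(M)\not\subseteq\operatorname{Ann}(N)$. The correct version (for simples with the same central character) is essentially one direction of Vogan's ordering theorem for the primitive spectrum, a substantive result whose proofs use Harish--Chandra bimodule machinery that historically sits downstream of the Duflo--Dixmier results, including 8.5.8 itself, so your route is at best incomplete and at worst circular. Note also that the paper gives no proof: the statement is quoted from Dixmier [Dix77, 8.5.8]. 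The classical route behind that citation avoids tensoring altogether: since $\operatorname{Ann}_{U(\mathfrak{g})}M(\lambda)=U(\mathfrak{g})\ker\chi_{\lambda}$ (Duflo), one shows that for dominant $\lambda$ every proper two-sided ideal $I\supseteq U(\mathfrak{g})\ker\chi_{\lambda}$ satisfies $IM(\lambda)\neq M(\lambda)$; then $I$ annihilates the nonzero quotient $M(\lambda)/IM(\lambda)$, which surjects onto $L(\lambda)$, whence $I\subseteq\operatorname{Ann}_{U(\mathfrak{g})}L(\lambda)$. This treats integral and non-integral $\lambda$ uniformly (cf.\ also [Jos79], cited in the paper for the regular dominant refinement).
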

 
 \
 If $\lambda$ is regular dominant, then the correspondence $I\mapsto IM(\lambda)$ gives an order-preserving bijection between  two sided ideals of $U(\mathfrak{g})$ containing $U(\mathfrak{g})\ker \chi_{ \lambda}$  and submodules of $M(\lambda)$ \cite[Corollaries 4.3 and 4.8]{Jos79}.  
As a consequence of Proposition \ref{max-ann} we immediately have the following well known assertion.
 
 \begin{Co}\label{dom-ann}
 Let $\lambda, \mu \in \mathfrak{h}^*$ be dominant. If $ \mu=w \cdot \lambda$  for some $w\in W$ then  $\operatorname{Ann}_{U(\mathfrak{g})}L(\lambda)=\operatorname{Ann}_{U(\mathfrak{g})}L(\mu)$. Moreover, the converse holds if $\lambda$ is  regular.
 \end{Co}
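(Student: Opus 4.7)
The plan is to reduce the statement directly to Proposition~\ref{max-ann} together with the Harish-Chandra characterization of central characters: for any $\lambda,\mu\in\mathfrak h^*$ one has $\chi_\lambda=\chi_\mu$ if and only if $\mu\in W\cdot\lambda$. This standard fact is what converts the Weyl-orbit hypothesis into a statement about the two-sided ideals sitting over $U(\mathfrak g)\ker\chi_\lambda$, which is the object controlled by Proposition~\ref{max-ann}.

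For the forward direction, suppose $\mu=w\cdot\lambda$. Then $\chi_\mu=\chi_\lambda$, so $U(\mathfrak g)\ker\chi_\lambda=U(\mathfrak g)\ker\chi_\mu$. Both $L(\lambda)$ and $L(\mu)$ are simple $\mathfrak g$-modules on which $\mathcal Z$ acts through the same character, so $\operatorname{Ann}_{U(\mathfrak g)}L(\lambda)$ and $\operatorname{Ann}_{U(\mathfrak g)}L(\mu)$ both contain this two-sided ideal. Since $\lambda$ (hence $\mu$, which is dominant by assumption) is dominant, Proposition~\ref{max-ann} tells us that each of these annihilators is the \emph{unique} maximal two-sided ideal of $U(\mathfrak g)$ containing $U(\mathfrak g)\ker\chi_\lambda=U(\mathfrak g)\ker\chi_\mu$, and the equality follows at once.

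For the converse, assume $\lambda$ is regular and $\operatorname{Ann}_{U(\mathfrak g)}L(\lambda)=\operatorname{Ann}_{U(\mathfrak g)}L(\mu)$. The key observation is that the central character can be read off from the annihilator: since $\mathcal Z$ acts on $L(\lambda)$ via $\chi_\lambda$ (and similarly on $L(\mu)$ via $\chi_\mu$), one has
\[
\ker\chi_\lambda=\operatorname{Ann}_{U(\mathfrak g)}L(\lambda)\cap\mathcal Z=\operatorname{Ann}_{U(\mathfrak g)}L(\mu)\cap\mathcal Z=\ker\chi_\mu,
\]
so $\chi_\lambda=\chi_\mu$ and Harish-Chandra gives $\mu\in W\cdot\lambda$, as desired.

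I expect the forward direction to be entirely formal once Proposition~\ref{max-ann} is invoked; the only point requiring any care is making sure both $\lambda$ and $\mu$ are dominant so that the uniqueness statement applies to each annihilator. For the converse, the nontrivial ingredient is actually Harish-Chandra (not regularity itself), and the role of regularity here is to make the statement non-vacuous and to connect cleanly with the order-preserving bijection between ideals over $U(\mathfrak g)\ker\chi_\lambda$ and submodules of $M(\lambda)$ recalled just before the corollary.
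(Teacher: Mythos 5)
Your proof is correct and is essentially the argument the paper intends when it calls the corollary an immediate consequence of Proposition \ref{max-ann}: the forward direction combines Harish--Chandra's theorem ($\mu=w\cdot\lambda$ gives $\chi_\mu=\chi_\lambda$) with the uniqueness of the maximal two-sided ideal over $U(\mathfrak{g})\ker\chi_\lambda$ for each of the two dominant weights, and the converse extracts the central character from the annihilator ($\operatorname{Ann}_{U(\mathfrak{g})}L(\lambda)\cap\mathcal{Z}=\ker\chi_\lambda$) and applies Harish--Chandra again. Your observation that the converse step never actually uses regularity is accurate — you prove a slightly stronger statement than the one asserted — and this is not a flaw in the argument.
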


\

\subsection{Affine vertex algebras} Consider  the non-twisted affine Kac-Moody algebra associated with
$\mathfrak{g}$:
$\widehat{\mathfrak{g}}=\mathfrak{g}[t,t^{-1}]+\mathbb C K$, where $K$ is a central element.
 Denote by $\widehat{\Delta}$  the set of roots of $\widehat{\mathfrak{g}}$. Then 
   $\widehat{\Delta}^{re}=\{\alpha+m\delta\mid \alpha\in \Delta,  m\in \mathbb{Z}\}$ is the set of real roots and 
   $\widehat{\Delta}^{re}_+=\{\alpha+m\delta\mid \alpha\in \Delta_+, m\in \mathbb{Z}_{\geq 0}\}\sqcup
   \{-\alpha+m\delta\mid \alpha\in \Delta_+, m\in \mathbb{Z}_{\geq 1}\}$ is the set of  positive real roots with respect to the basis $\pi\cup \{-\theta+\delta\}$, where $\theta$ is the maximal positive root of $\mathfrak{g}$ and $\delta$ is the minimal indivisible positive imaginary root.  
By $\widetilde{W}=W\ltimes Q^{\vee}$ we denote the extended affine Weyl group of $\widehat{\mathfrak{g}}$.

For $k\in \mathbb C$ denote by $V^k(\mathfrak{g})$ the \emph{universal affine vertex algebra}
associated with $\mathfrak{g}$ at  level $k$  (\cite{Kac98,FB04}):
\begin{align*}
 V^k(\mathfrak{g})=U(\widehat{\mathfrak{g}})\*_{U(\mathfrak{g}[t]\oplus \mathbb C K)}\mathbb C v_k,
\end{align*}
where
$\mathfrak{g}[t]v_k=0$  and $Kv_k=kv_k$.

The \emph{simple affine vertex algebra} associated with $\mathfrak{g}$
 at level $k$ is the unique simple graded quotient of $V^k(\mathfrak{g})$, which will be denoted by $V_k(\mathfrak{g})$. 
In the conformal case  (i.e. $k\neq -h^\vee$, where $h^\vee$ is the dual Coxeter number of $\mathfrak{g}$)  there is a one-to-one correspondence
between simple positive energy representations of 
 $V_k(\mathfrak{g})$ and simple  $A(V_k(\mathfrak{g}))$-modules   \cite{Zh}, where \begin{align*}
 A(V_k(\mathfrak{g}))\simeq U(\mathfrak{g})/I_k\label{eq:zhu-of-simple}
\end{align*}
is the \emph{Zhu's algebra} of $V_k(\mathfrak{g})$ and $I_k$ is some
two-sided ideal  of  $U(\mathfrak{g})$.
 
For $\lambda\in {\mathfrak h}^*$ and $k\in \mathbb C$ denote by $\widehat{L}_k( \lambda)$
 the simple highest weight  $\widehat{\mathfrak {g}}$-module with the highest weight 
$\widehat{\lambda}:=\lambda+k\Lambda_0\in \widehat{\mathfrak{h}}^*$, where $\Lambda_0(K)=1$, and $\Lambda_0(h)=0$ for $h\in \mathfrak h$.
Following \cite{KacWak89}, the module $\widehat{L}_k(\lambda)$, and the highest weight $\lambda$,  will be called \emph{admissible}  if
\begin{enumerate}
 \item $\langle \lambda+\widehat{\rho},\alpha^{\vee}\rangle \notin \mathbb{Z}_{\leq 0}$ for all $\alpha\in \widehat{\Delta}^{re}_+$;
 \item $\mathbb{Q}\widehat\Delta(\lambda)=\mathbb{Q} \widehat{\Delta}^{re}$,  
\end{enumerate}
where  $\widehat\Delta(\lambda)
=\{\alpha\in \widehat{\Delta}^{re}\ |\  \langle
 \lambda+\widehat{\rho},\alpha^{\vee}\rangle
\in \mathbb{Z}\}$. The  level $k$ of  $\widehat{\mathfrak {g}}$ is 
  \emph{admissible} 
  if the $\widehat{\mathfrak {g}}$-module
  $V_k(\mathfrak{g})\cong \widehat{L}_k(0)$ is admissible.

In this paper we are interested in type $A$ only, hence from now on we fix $n\geq 2$ and assume that $\mathfrak g$ is of type $A_n$.
In this case the description of admissible levels is as follows.
  
\begin{Pro}\cite[Proposition 1.2]{KacWak08}
\label{Pro:admissible number}
Let $\mathfrak{g}=\mathfrak{sl}_{n+1}$. The number  $k$ is admissible if and only if 
 \begin{align*}
  k+n=\frac{p}{q}-1
 \quad \text{with }p,q\in \mathbb{N},\ (p,q)=1,\ p\geq n+1.
 \end{align*}
\end{Pro}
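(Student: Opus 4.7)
The plan is to specialize the general Kac--Wakimoto admissibility criterion (the two conditions already stated in the excerpt) to the vacuum weight $\widehat{\lambda} = k\Lambda_0$ and use the fact that $\mathfrak{sl}_{n+1}$ is simply laced with dual Coxeter number $h^\vee = n+1$. Throughout I would use the decomposition $\widehat{\rho} = \rho_\pi + h^\vee \Lambda_0$ and, for a real root $\alpha = \beta + m\delta$ with $\beta \in \Delta$, the coroot formula $\alpha^\vee = \beta^\vee + mK$ (which in the simply laced case has no extra normalizing factor). This yields the clean evaluation
\[
\langle k\Lambda_0 + \widehat{\rho}, \alpha^\vee\rangle = \langle \rho_\pi, \beta^\vee\rangle + m(k+h^\vee),
\]
which is the single formula driving the whole argument.

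I would first extract the rationality of $k + h^\vee$ from condition (2). The set $\widehat{\Delta}(k\Lambda_0)$ consists of those real roots $\beta + m\delta$ for which $\langle \rho_\pi,\beta^\vee\rangle + m(k+h^\vee) \in \mathbb{Z}$. If $k+h^\vee \notin \mathbb{Q}$ then this set lies in finitely many cosets $\beta + \mathbb{Z}\delta$ (only one value of $m$ per $\beta$), so $\mathbb{Q}\widehat{\Delta}(k\Lambda_0)$ cannot span $\mathbb{Q}\widehat{\Delta}^{re}$; conversely, as soon as $k+h^\vee = p/q \in \mathbb{Q}$ in lowest terms, for each $\beta$ there are infinitely many admissible $m$ (an arithmetic progression of step $q$), and one checks that together with finite real roots they generate $\mathbb{Q}\widehat{\Delta}^{re}$.

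Next I would extract the inequality $p \geq n+1$ from condition (1). Write $k+h^\vee = p/q$ with $q\geq 1$ and $(p,q)=1$. For $\alpha = \beta + m\delta \in \widehat{\Delta}^{re}_+$, either $\beta \in \Delta_+$ with $m\geq 0$ (in which case $\langle \rho_\pi, \beta^\vee\rangle \geq 1$ and the pairing is a positive rational for $m\geq 1$, or $\geq 1$ for $m=0$), or $\beta \in -\Delta_+$ with $m\geq 1$, where the pairing becomes $-\langle \rho_\pi, \gamma^\vee\rangle + mp/q$ for $\gamma = -\beta \in \Delta_+$. The dangerous case is the second: a violation of condition (1) occurs iff there exist $\gamma \in \Delta_+$ and $m \geq 1$ with $mp/q \in \mathbb{Z}$ and $mp/q \leq \langle \rho_\pi, \gamma^\vee\rangle$. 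Since $(p,q)=1$, the integrality forces $q\mid m$, so the smallest such value is $m=q$, giving $mp/q = p$; and the range of $\langle \rho_\pi, \gamma^\vee\rangle$ over $\gamma \in \Delta_+$ in type $A_n$ is $\{1,2,\ldots, n\}$ (attained along the height function, the maximum $n = h^\vee - 1$ being reached at the highest root $\theta$). Hence a bad $\gamma$ exists iff $p \leq n$, and no bad pairing arises iff $p \geq n+1$.

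The main obstacle is the last bookkeeping step: one must simultaneously check that \emph{every} positive real root avoids the negative integers when $p\geq n+1$, not just the highest root, and that condition (2) really is satisfied (and not merely not obstructed) at these levels. Both are clean in type $A$ because $\langle \rho_\pi,\gamma^\vee\rangle$ is simply the height of $\gamma$ and ranges over $\{1,\ldots,n\}$, so the analysis above is tight. Assembling the two halves, $k$ is admissible iff $k+n+1 = p/q$ with $(p,q)=1$, $q\geq 1$, and $p\geq n+1$, which is the claim after rewriting $k+h^\vee = p/q$ as $k+n = p/q - 1$.
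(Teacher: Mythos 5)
Your proposal is correct. The paper itself offers no proof of this statement --- it is quoted verbatim from Kac--Wakimoto \cite[Proposition 1.2]{KacWak08} --- so there is nothing internal to compare against; what you have written is essentially the standard verification underlying the cited result: admissibility of the level means admissibility of the vacuum weight $k\Lambda_0$, the evaluation $\langle k\Lambda_0+\widehat{\rho},(\beta+m\delta)^{\vee}\rangle=\langle\rho_{\pi},\beta^{\vee}\rangle+m(k+h^{\vee})$ is the right engine (valid here because $\mathfrak{sl}_{n+1}$ is simply laced, so $(\beta+m\delta)^{\vee}=\beta^{\vee}+mK$ with no rescaling), condition (2) is equivalent to $k+h^{\vee}\in\mathbb{Q}$ since then $\widehat{\Delta}(k\Lambda_0)=\{\beta+mq\delta\}$ spans $\mathbb{Q}\Delta+\mathbb{Q}\delta$, and condition (1) reduces to the roots $-\gamma+m\delta$ with $q\mid m$, where the height function $\langle\rho_{\pi},\gamma^{\vee}\rangle\in\{1,\dots,n\}$ gives the sharp bound $p\geq n+1=h^{\vee}$. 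One small presentational remark: your Case A analysis ("the pairing is a positive rational for $m\geq 1$") tacitly assumes $k+h^{\vee}>0$, i.e.\ $p>0$; this is harmless because when $p\leq 0$ Case B already produces a violation of condition (1) (take $m=q$), so the biconditional "condition (1) holds iff $p\geq n+1$" survives, but it is worth saying explicitly that $p$ is a priori only a nonzero integer and that the positivity is forced by the conclusion rather than assumed.
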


\

\subsection{Relaxed highest weight modules}
Fix an admissible number $k$. 
Following \cite{AFR17}, we say that a $\mathfrak{g}$-module $M$ is {\it{admissible}} of  level $k$ if $M$ is an $A(V_k(\mathfrak{g}))$-module. The corresponding simple quotient of the induced module for $\widehat{\mathfrak {g}}$ is a {\it{relaxed highest weight module}}.
We have a one-to-one correspondence between the set of isomorphism classes of simple admissible $\mathfrak{g}$-modules of level $k$ and the isomorphism classes  of simple positive energy representations of $V_k(\mathfrak{g})$.

Denote by $Pr_k$ the set of admissible weights $\lambda$ such that there exists $y\in \widetilde{W}$ satisfying  $\widehat{\Delta}(\lambda)=y( \widehat{\Delta}(k\Lambda_0))$, and $Pr_{k,\mathbb{Z}}:=\{\lambda\in Pr_k\ |\  \lambda(K)=k,\ \langle \lambda,\alpha_i^{\vee}\rangle \in \mathbb{Z}\ \text{for all }i=1,\dots,l \}$, where $l$ is the rank of  $\mathfrak{g}$. If $k$  has the denominator $q$ such that   $(q,r^\vee)=1$, then (see \cite{KacWak08})
$$Pr_k=\mathop{\bigcup_{y\in \widetilde{W}}}_{ y(\widehat{\Delta}(k\Lambda_0)_+)\subset \widehat{\Delta}^{re}_+}Pr_{k,y},\quad \text{ and } \quad Pr_{k,y}:=y\cdot Pr_{k,\mathbb{Z}}.$$
Let $\overline{Pr_k}:=\{\bar \lambda\mid \lambda\in Pr_k\}\subset \mathfrak{h}^*,$ where 
$\bar \lambda$ is the projection of $\lambda$ to $\mathfrak{h}^*$. The importance of this set comes from the following Arakawa result of Arakawa. 

\begin{Th}\cite[Main Theorem]{Ara16}\label{Th:classification-of-admissible-modules}
  Let $k$ be admissible, $\lambda\in \mathfrak{h}^*$. Then $\widehat{L}_k(\lambda)$ is a module over $V_k(\mathfrak g)$ if and only if   $\lambda \in \overline{Pr_k}$. 
\end{Th}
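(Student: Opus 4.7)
The plan is to use Zhu's functor to reduce the statement to a question about two-sided ideals of $U(\mathfrak{g})$, and then combine quantum Drinfeld--Sokolov reduction with associated variety arguments. Since $A(V_k(\mathfrak{g})) \cong U(\mathfrak{g})/I_k$, the simple positive energy module $\widehat{L}_k(\lambda)$ descends to a $V_k(\mathfrak{g})$-module if and only if its top $L(\bar\lambda)$ is an $A(V_k(\mathfrak{g}))$-module, equivalently $I_k \subseteq \operatorname{Ann}_{U(\mathfrak{g})} L(\bar\lambda)$. The theorem is thus equivalent to: this containment holds exactly when $\bar\lambda \in \overline{Pr_k}$.

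For the ``if'' direction, suppose $\lambda \in \overline{Pr_k}$ with a lift to an admissible $\widehat{\lambda}$. I would invoke the principal BRST reduction functor $H := H^{0}_{DS, f_{\mathrm{prin}}}(-)$. At admissible level, $H(V_k(\mathfrak{g}))$ is identified with the simple principal W-algebra $\mathcal{W}_k(\mathfrak{g}, f_{\mathrm{prin}})$ (Arakawa's earlier theorem), and $H$ sends admissible highest weight $\widehat{\mathfrak{g}}$-modules to simple (or zero) positive energy $\mathcal{W}$-modules. The non-vanishing of $H(\widehat{L}_k(\lambda))$ for admissible lifts follows from the Kac--Wakimoto character formulas combined with the explicit $\widetilde{W}$-shift description of $Pr_k$. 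Since $H$ detects descent to $V_k(\mathfrak{g})$ -- a nonzero W-module image forces the affine module to factor through $V_k(\mathfrak{g})$ -- one concludes that $\widehat{L}_k(\lambda)$ is a $V_k(\mathfrak{g})$-module.

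For the ``only if'' direction, assume $\widehat{L}_k(\lambda)$ is a $V_k(\mathfrak{g})$-module. The associated variety $X_M$ of any positive energy $V_k(\mathfrak{g})$-module $M$ satisfies $X_M \subseteq X_{V_k(\mathfrak{g})}$, and by Arakawa's associated variety theorem for admissible affine vertex algebras the latter equals $\overline{\mathbb{O}_q}$, a specific nilpotent orbit closure determined by the denominator $q$ of $k+h^{\vee}$. Transferring to the finite picture through the Zhu algebra, this forces the associated variety of $\operatorname{Ann}_{U(\mathfrak{g})} L(\bar\lambda)$ into $\overline{\mathbb{O}_q}$. Using Joseph's classification of primitive ideals of $U(\mathfrak{g})$ by their associated varieties together with Goldie rank data, and matching these with the shifted orbit description $\widehat{\Delta}(\lambda) = y(\widehat{\Delta}(k\Lambda_0))$, one extracts precisely the set $\overline{Pr_k}$ as the weights compatible with the orbit-closure constraint.

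The main obstacle is the converse direction: the inclusion of associated varieties alone does not pin down $\bar\lambda$, since many primitive ideals share an associated variety. One must supplement it with Joseph's orbit data and the Kac--Wakimoto combinatorial characterization of $\widetilde{W}$-shifted admissible weights in order to identify the correct $\widetilde{W}$-coset. The ``if'' direction is comparatively uniform thanks to the BRST functor, but the ``only if'' direction hinges on the sharp fact that $\overline{Pr_k}$ already exhausts all the weights whose annihilators are compatible with the orbit $\mathbb{O}_q$ cut out by $X_{V_k(\mathfrak{g})}$.
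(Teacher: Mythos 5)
This statement is quoted from Arakawa's paper [Ara16] and is not proved in the present paper, so there is no internal proof to compare with; what can be assessed is whether your sketch would actually establish the theorem, and it would not. The reduction through Zhu's functor (descent of $\widehat{L}_k(\lambda)$ to $V_k(\mathfrak{g})$ is equivalent to $I_k\subseteq \operatorname{Ann}_{U(\mathfrak{g})}L(\bar\lambda)$) is fine, but both directions after that have genuine gaps. In the ``if'' direction, the mechanism you propose is not valid: the Drinfeld--Sokolov reduction functor is defined on all level-$k$ smooth $\widehat{\mathfrak{g}}$-modules, and non-vanishing of $H^{0}_{DS}(\widehat{L}_k(\lambda))$ says nothing about whether the maximal ideal of $V^k(\mathfrak{g})$ annihilates $\widehat{L}_k(\lambda)$. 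The functorial implication goes the other way (modules over the simple quotient are sent to modules over the simple $W$-algebra); ``a nonzero $W$-module image forces the affine module to factor through $V_k(\mathfrak{g})$'' is an unproved converse, and there are highest weight modules with nonzero reduction that are not $V_k(\mathfrak{g})$-modules. So the admissibility of $\lambda\in\overline{Pr_k}$ is never actually used to verify $I_k\subseteq J_{\bar\lambda}$, which is the real content.

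In the ``only if'' direction you correctly obtain the constraint $\operatorname{Var}(J_{\bar\lambda})\subseteq \operatorname{Var}(I_k)=\overline{\mathbb{O}_q}$ (which, note, already relies on Arakawa's associated variety theorem, proved after and partly by means of the classification you are trying to prove), but the passage from this orbit-closure constraint to $\bar\lambda\in\overline{Pr_k}$ is exactly the hard step, and ``Joseph's classification of primitive ideals by their associated varieties together with Goldie rank data'' does not deliver it: primitive ideals are not determined by their associated varieties, infinitely many central characters are compatible with a given orbit closure, and the defining condition of $Pr_k$, namely $\widehat{\Delta}(\lambda)=y(\widehat{\Delta}(k\Lambda_0))$ for some $y\in\widetilde{W}$, is an affine integrality condition that you never connect to the containment $I_k\subseteq J_{\bar\lambda}$. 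You acknowledge this as ``the main obstacle,'' but it is precisely the theorem; what is missing is a concrete description of $I_k$ (equivalently of the Zhu algebra $A(V_k(\mathfrak{g}))$, cf. the product decomposition over $[\overline{Pr}_k]$ recorded in the paper) sharp enough to decide which primitive ideals contain it. As written, the proposal reduces the theorem to itself rather than proving it.
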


For $\lambda\in \mathfrak{h}^*$ denote by 
$J_{\lambda}$  the corresponding primitive ideal $\operatorname{Ann}_{U(\mathfrak{g})}L(\lambda)$. 
By Theorem \ref{Th:classification-of-admissible-modules}, a simple
 $\mathfrak g$-module $M$  is  admissible  of level $k$ if and only if 
 $\operatorname{Ann}_{U(\mathfrak{g})}M=J_{\lambda}$ for some 
$ \lambda\in \overline{Pr_k}$.
As a consequence of  Proposition \ref{max-ann} and Corollary  \ref{dom-ann}, we have immediately the following statement.

 \begin{Pro} \cite[Proposition 2.4]{AFR17}\label{Pro:equiv-class-adm}
  For $\lambda \in \overline{Pr_k}$, the primitive ideal $J_{\lambda}$ is the unique maximal two-sided ideal of $U(\mathfrak{g})$ containing   $U(\mathfrak{g})\ker \chi_{\lambda}$. In particular, $J_{ \lambda}=J_{ \mu}$ for $\lambda,\mu \in \overline{Pr_k}$ if and only if  there exists $w\in W$ such that $ \mu=w\cdot\lambda$.
 \end{Pro}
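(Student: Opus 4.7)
The plan is to reduce the statement directly to Proposition \ref{max-ann} and Corollary \ref{dom-ann}, whose hypotheses require (regular) dominance. The only real work will be to show that every weight $\lambda\in\overline{Pr_k}$, viewed as an element of $\mathfrak{h}^*$, is automatically regular dominant; once that is in hand, both assertions fall out essentially by citation.

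To verify regular dominance, I would start from some $\widetilde\lambda\in Pr_k$ with $\bar{\widetilde\lambda}=\lambda$. Admissibility condition (1) reads
\[
  \langle \widetilde\lambda+\widehat{\rho},\alpha^{\vee}\rangle \notin \mathbb{Z}_{\leq 0}\quad\text{for all } \alpha\in\widehat{\Delta}^{re}_+.
\]
Specializing to the real roots of the form $\alpha\in\Delta_+\subset \widehat{\Delta}^{re}_+$ (i.e.\ the ``$m=0$'' slice) and using that $\widehat{\rho}$ restricts to $\rho_\pi$ on $\mathfrak{h}$, one obtains
\[
  \langle \lambda+\rho_\pi,\alpha^{\vee}\rangle \notin \mathbb{Z}_{\leq 0}\quad\text{for all } \alpha\in\Delta_+.
\]
Since $0\in\mathbb{Z}_{\leq 0}$, this pairing is nonzero for every positive root; combined with the Weyl-group symmetry $\langle \lambda+\rho_\pi,(-\alpha)^\vee\rangle=-\langle \lambda+\rho_\pi,\alpha^\vee\rangle$, regularity on all of $\Delta$ follows, and the condition $\notin\mathbb{Z}_{<0}$ on $\Delta_+$ is a fortiori satisfied, so $\lambda$ is regular dominant.

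With this established, the first assertion is a direct invocation of Proposition \ref{max-ann}: $J_{\lambda}=\operatorname{Ann}_{U(\mathfrak{g})}L(\lambda)$ is the unique maximal two-sided ideal of $U(\mathfrak{g})$ containing $U(\mathfrak{g})\ker\chi_\lambda$. For the second, both $\lambda$ and $\mu$ lie in $\overline{Pr_k}$ and hence are regular dominant by the previous paragraph. The ``if'' direction of Corollary \ref{dom-ann} gives $J_\lambda=J_\mu$ whenever $\mu=w\cdot\lambda$, and the converse part (which requires the regularity we just checked) gives the other implication: $J_\lambda=J_\mu$ forces $\chi_\lambda=\chi_\mu$, hence $\mu\in W\cdot\lambda$ by Harish-Chandra.

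The only conceptual obstacle is recognizing that admissibility of the affine weight is strictly stronger than regular dominance of its finite projection, which boils down to the observation that the ``$m=0$'' part of the admissibility condition already excludes $0$ as a value of $\langle\lambda+\rho_\pi,\alpha^\vee\rangle$. Everything else is a citation.
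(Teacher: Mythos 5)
Your proposal is correct and follows the same route the paper takes: the paper states the result as an immediate consequence of Proposition \ref{max-ann} and Corollary \ref{dom-ann} (citing \cite[Proposition 2.4]{AFR17}), and the only implicit step is precisely the one you make explicit, namely that the $m=0$ slice of the admissibility condition forces $\left<\lambda+\rho_{\pi},\alpha^{\vee}\right>\notin\mathbb{Z}_{\leq 0}$ for all $\alpha\in\Delta_+$, so every $\lambda\in\overline{Pr_k}$ is regular dominant. Your verification of that step (including the reduction of the affine pairing to the finite one) is accurate, so nothing is missing.
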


\

\begin{Rem}
 The following characterization of the Zhu's algebra of $V_k(\mathfrak{g})$ was shown in    \cite[Theorem 3.4]{AE19}:  $$ A(V_k(\mathfrak{g}))\simeq  \prod_{\lambda \in  [\overline{{\rm Pr}}_k]} \dfrac{U(\mathfrak{g})}{J_\lambda}.$$
\end{Rem}

For a two-sided ideal $I$ of $U(\mathfrak g)$,
denote by $\operatorname{Var}(I)$  the \emph{associated  variety} of $I$ (the zero locus of  the associated graded ideal
 $gr I$ in $\mathfrak g^*$,
with respect to  the PBW filtration). If $I$ is primitive then
$\operatorname{Var} (I)=\overline{\mathbb{O}}$  for some nilpotent orbit $\mathbb{O}$ of $\mathfrak g$ \cite[Theorem 3.10]{Jos85}.

\begin{Def}
A simple $\mathfrak g$-module $M$ is \emph{in the orbit $\mathbb{O}$}  if
$\operatorname{Var} (\operatorname{Ann}_{U(\mathfrak g)}M)=\overline{\mathbb{O}}$.
 \end{Def}
 
 We have
 
 \begin{Th}\cite[Corollary 9.2 and Theorem 9.5]{Ara15}\label{Th;subsquence}
  A simple $\mathfrak{g}$-module $M$ in the orbit $\mathbb{O}$ is  admissible  of level $k$  if and only if $\operatorname{Ann}_{U(\mathfrak{g})}M=J_{\lambda}$ for some $\lambda\in [\overline{Pr}_k^{\mathbb{O}}]$. Moreover, there exists  a nilpotent orbit $\mathbb{O}_q$ that depends only on $q$  such that
  $$ \text{Var}(I_k)=\overline{\mathbb{O}_q}.$$
 \end{Th}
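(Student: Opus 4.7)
The theorem splits into two statements. For the first equivalence, I combine the definition of admissibility via the Zhu algebra with the product decomposition
\[
A(V_k(\mathfrak g))\cong \prod_{[\lambda]\in[\overline{Pr_k}]}U(\mathfrak g)/J_\lambda
\]
recalled in the Remark, which yields $I_k=\bigcap_{[\lambda]}J_\lambda$. By definition a simple $\mathfrak g$-module $M$ is admissible of level $k$ iff $I_k\subseteq\operatorname{Ann}_{U(\mathfrak g)}M$, i.e.\ iff the action of $U(\mathfrak g)$ on $M$ factors through $A(V_k(\mathfrak g))$. For such simple $M$ the action factors through a single factor $U(\mathfrak g)/J_\lambda$ of the product; since by Proposition \ref{Pro:equiv-class-adm} the ideal $J_\lambda$ is the unique maximal two-sided ideal of $U(\mathfrak g)$ among those containing $U(\mathfrak g)\ker\chi_\lambda$, and any simple $A(V_k(\mathfrak g))$-module possesses a unique central character $\chi_\lambda$, the primitive ideal $\operatorname{Ann}_{U(\mathfrak g)}M$ must coincide with $J_\lambda$ for exactly one class $[\lambda]\in[\overline{Pr_k}]$. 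Imposing that $M$ lies in the orbit $\mathbb O$ is then equivalent to $\operatorname{Var}(J_\lambda)=\overline{\mathbb O}$, i.e.\ $[\lambda]\in[\overline{Pr}_k^{\mathbb O}]$, by the very definition of the latter set. The converse is trivial: if $\operatorname{Ann}_{U(\mathfrak g)}M=J_\lambda$ for $[\lambda]\in[\overline{Pr}_k^{\mathbb O}]$, then $I_k\subseteq J_\lambda=\operatorname{Ann}_{U(\mathfrak g)}M$ and $M$ is admissible.

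For the second assertion, the intersection formula for $I_k$ gives
\[
\operatorname{Var}(I_k)=\bigcup_{[\lambda]\in[\overline{Pr_k}]}\operatorname{Var}(J_\lambda),
\]
and each $\operatorname{Var}(J_\lambda)$ is an orbit closure $\overline{\mathbb O_\lambda}$ in the nilpotent cone of $\mathfrak g^*$ by Joseph's theorem \cite[Theorem 3.10]{Jos85}. To identify this (possibly infinite) union with a single orbit closure depending only on $q$, I pass to the vertex-algebraic \emph{associated variety} $X_{V_k(\mathfrak g)}\subseteq\mathfrak g^*$, a reduced $G$-invariant Poisson subscheme attached to the simple affine vertex algebra. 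Compatibility of the Li filtration on $V_k(\mathfrak g)$ with the PBW filtration under the Zhu functor yields the identification $\operatorname{Var}(I_k)=X_{V_k(\mathfrak g)}$ as closed subsets of $\mathfrak g^*$, reducing the claim to showing $X_{V_k(\mathfrak g)}=\overline{\mathbb O_q}$ for an orbit $\mathbb O_q$ depending only on the denominator $q$.

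The hard part is this last identification, and here I follow Arakawa's strategy. Three ingredients are required: (a) at admissible levels $V_k(\mathfrak g)$ is quasi-lisse, forcing $X_{V_k(\mathfrak g)}$ to lie inside the nilpotent cone; (b) the $G$-invariance together with the Poisson structure and irreducibility of $X_{V_k(\mathfrak g)}$ force it to be the closure of a single nilpotent orbit; (c) the Drinfel'd--Sokolov BRST reduction with respect to a nilpotent $f$ in the putative orbit produces the simple $W$-algebra $\mathcal W_k(\mathfrak g,f)$, and lisseness of the latter pins down the orbit $\mathbb O_q$ explicitly in terms of $q$ (in type $A_n$ via a $q$-dependent partition of $n+1$). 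Step (c) is the principal technical obstacle, drawing heavily on $W$-algebra machinery; I do not see a route to bypass this geometric input and handle the union $\bigcup_{[\lambda]}\overline{\mathbb O_\lambda}$ by ring-theoretic means alone.
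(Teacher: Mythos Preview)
The paper does not prove this theorem; it is quoted verbatim from \cite[Corollary~9.2 and Theorem~9.5]{Ara15} and no argument is supplied. There is therefore no in-paper proof to compare your proposal against.

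A couple of remarks on the proposal itself. For the first equivalence, your appeal to the product decomposition $A(V_k(\mathfrak g))\cong\prod U(\mathfrak g)/J_\lambda$ from \cite{AE19} is unnecessary and risks circularity, since that 2019 result postdates \cite{Ara15} and may depend on it. The paper in fact already records the orbit-free version of the equivalence immediately after Theorem~\ref{Th:classification-of-admissible-modules}: a simple $\mathfrak g$-module $M$ is admissible of level $k$ if and only if $\operatorname{Ann}_{U(\mathfrak g)}M=J_\lambda$ for some $\lambda\in\overline{Pr_k}$, and this is obtained directly from Arakawa's classification together with Proposition~\ref{Pro:equiv-class-adm}. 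The refinement by the orbit $\mathbb O$ is then immediate from the definition of $\overline{Pr}_k^{\mathbb O}$, so no Zhu-algebra product formula is needed.

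For the second assertion your sketch is an accurate outline of Arakawa's original argument in \cite{Ara15} via the associated variety $X_{V_k(\mathfrak g)}$ and quantized Drinfeld--Sokolov reduction; as you note, the identification of $\mathbb O_q$ is the substantive geometric input and is not something the present paper attempts to reprove or even summarize.
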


Hence,
 $\text{Var}(J_{\lambda})\subset \text{Var}(I_k)=\overline{\mathbb{O}_q}$
for any $\lambda\in \overline{Pr}_k$. Moreover, 
 by Theorems \ref{Th:classification-of-admissible-modules} and \ref{Th;subsquence}, we have
\begin{align*}
 \overline{Pr}_k=\bigsqcup_{\mathbb{O}\subset \overline{\mathbb{O}_q}}\overline{Pr}_k^{\mathbb{O}},
\end{align*}
where $\mathbb{O}$ is a nilpotent orbit of $\mathfrak{g}$ and
$
  \overline{Pr}_k^{\mathbb{O}}=\{\lambda\in \overline{Pr}_k\mid \text{Var}(J_{\lambda})=\overline{\mathbb{O}}\}.
 $

 Set $
[\overline{Pr}_k]=\overline{Pr}_k/\sim
$, where $\lambda\sim \mu$ if and only if there exists $w\in W$ such that $\mu=w\cdot \lambda$.
  By  Proposition \ref{Pro:equiv-class-adm}, the ideal
 $J_{\lambda}$ depends only on the class of $\lambda\in {\overline{Pr}_k}$ in $[\overline{Pr}_k]$.
 Therefore,
 \begin{align*}
  [\overline{Pr}_k]=\bigsqcup_{\mathbb{O}\subset \overline{\mathbb{O}_q}}[\overline{Pr}_k^{\mathbb{O}}],
 \end{align*}
 where $[\overline{Pr}_k^{\mathbb{O}}]$ is the image of $\overline{Pr}_k^{\mathbb{O}}$ in $  [\overline{Pr}_k]$.

\

\section{Twisted localization}\label{Section: Loc Functors}
In this section $F := \{ f_1,\dots ,f_r\}$ will denote any set of locally ad-nilpotent commuting elements of $U(\mathfrak{g})$,    ${\bf x}$ any element of ${\mathbb C}^r$. 
A $\mathfrak{g}$-module $M$ will be called \emph{$F$-bijective} if $f_i$ acts bijectively on $M$ for each $i=1,2,\dots, r$.
By $D_F U(\mathfrak{g})$ we denote the localization of $U(\mathfrak{g})$ relative to the multiplicative set $\langle F \rangle$ generated by $F $.  Similarly, for a $\mathfrak{g}$-module $M$ denote by $D_F M = D_F U(\mathfrak{g})\otimes_{U(\mathfrak{g})} M$  the \emph{localization of $M$ relative to $\langle F \rangle$}. We will consider $D_F M$ both as a $U(\mathfrak{g})$-module and as a $D_F U(\mathfrak{g})$-module.

\begin{Pro}\label{loc-max}
Let $M$ be an $F$-bijective $\mathfrak{g}$-module,  $L,N$  submodules of $M$, such that $L$ is not $F$-bijective, and $M/L$ is simple. If $L$ is  isomorphic to a proper submodule of $D_F N$ then $D_F N\simeq M$.
\end{Pro}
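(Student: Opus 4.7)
The plan is to apply the exact functor $D_F$ to the given injection, sandwich $D_F N$ between two isomorphic copies of $M$, and then use simplicity of $M/L$ to collapse the sandwich.

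First I would show that $D_F L = M$ as a submodule of $M$. Since $L \subseteq M$ and $M$ is $F$-bijective, the localization $D_F L$ sits naturally inside $D_F M = M$ as the $F$-bijective hull of $L$. The hypothesis that $L$ is not $F$-bijective means some $f_i \in F$ fails to be surjective on $L$, so $L \subsetneq D_F L \subseteq M$. The maximality of $L$ (which follows from $M/L$ simple) then forces $D_F L = M$.

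Next, let $\iota_L\colon L \hookrightarrow D_F N$ be the given injection with $\iota_L(L) =: L' \subsetneq D_F N$. Applying the exact functor $D_F$ produces an injection
\[
D_F\iota_L\colon M = D_F L \hookrightarrow D_F(D_F N) = D_F N.
\]
In particular $D_F N$ contains an abstract copy of $M$, namely $D_F L'$. On the other hand, localizing the inclusion $N \hookrightarrow M$ gives a canonical embedding $\gamma\colon D_F N \hookrightarrow D_F M = M$. Composing, we obtain an injective $\mathfrak g$-endomorphism $\eta := \gamma \circ D_F\iota_L\colon M \to M$ whose image is contained in $D_F N$.

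The remaining task is to argue that $\eta$ is surjective, for then $\gamma$ itself is surjective and we conclude $D_F N = M$ inside $M$. Since $M = D_F L$ and $\eta$ commutes with the action of $F$, the endomorphism $\eta$ is determined by its restriction $\eta|_L$, which equals $\iota_L$ viewed as a map $L \to M$. I would then consider the induced map $\bar\eta$ on the simple quotient $M/L$: by Schur's lemma $\bar\eta$ is either zero or an automorphism. The case $\bar\eta = 0$ would place $\eta(M)$ inside $L$, giving $M \hookrightarrow L$ as an $F$-bijective submodule of the non-$F$-bijective $L$, which together with $D_F L = M$ and the minimality of $L$ forces a contradiction. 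Hence $\bar\eta$ is an automorphism, and a snake lemma comparison with the $F$-equivariant extension property determines $\eta$ to be surjective.

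The main obstacle is the last step: Schroeder--Bernstein fails for general modules, so one cannot conclude $D_F N \simeq M$ merely from the existence of mutual injections. The crucial input allowing the collapse is the combination of (i) the maximality of $L$ in $M$ (via $M/L$ simple), (ii) the fact that $\eta$ is $F$-equivariant and $M$ is generated as an $F$-bijective module by $L$, and (iii) Schur's lemma on $M/L$. Carefully combining these three ingredients through the diagram obtained from $0 \to L \to M \to M/L \to 0$ is what I expect to be the decisive computation.
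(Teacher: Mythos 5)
Your first half already contains the paper's entire proof: exactness of $D_F$ gives the canonical embedding $\gamma\colon D_F N\hookrightarrow D_F M\simeq M$, and at that point one is done, because the hypothesis is meant (and is used, e.g.\ in Lemma \ref{localiz-E21} and Proposition \ref{localiz-F}) with $L$ sitting inside $D_F N$ via the canonical maps; then $L\subsetneq \gamma(D_F N)\subseteq M$, and since $M/L$ is simple, $L$ is a maximal submodule of $M$, which forces $\gamma(D_F N)=M$. Your preliminary observation that $D_F L=M$ is correct but not needed for this.

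The genuine gap is in your last step, where you treat the embedding $L\hookrightarrow D_F N$ as purely abstract and then try to defeat the resulting Schr\"oder--Bernstein obstruction. Three points there do not hold as written: (i) the map $\bar\eta$ on $M/L$ is only defined if $\eta(L)\subseteq L$, but $\eta(L)=\gamma(\iota_L(L))$ is merely some copy of $L$ inside $M$ and nothing forces it into $L$, so Schur's lemma cannot be invoked; (ii) in the branch $\eta(M)\subseteq L$ there is no contradiction in an $F$-bijective module being a submodule of the non-$F$-bijective module $L$ (non-$F$-bijective modules can perfectly well contain $F$-bijective submodules), so that case is not closed; (iii) the final sentence (``a snake lemma comparison with the $F$-equivariant extension property'') is not an argument: even granting that $\bar\eta$ is a well-defined automorphism, you only obtain $M=\eta(M)+L\subseteq\gamma(D_F N)+L$, and maximality of $L$ then gives nothing beyond what you already know---it does not yield $L\subseteq\gamma(D_F N)$, hence not $\gamma(D_F N)=M$. (Also, $L$ is maximal in $M$, not minimal.) The repair is not more machinery but the intended reading of the hypothesis: identify $L$ with its image inside $D_F N\subseteq M$ under the natural localization maps, exactly as in the applications, and conclude in one line from the maximality of $L$.
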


\begin{proof} As $D_F$ is an exact functor, then  $D_F N$ is a submodule of  $D_F M\simeq  M$.
 Since $L$ is a proper submodule of $D_F N$, the statement follows from the maximality of $L$ in $M$.
\end{proof}

For ${\bf x} = (x_1,\ldots,x_r) \in {\mathbb C}^r$ consider the automorphism $\Theta_F^{\bf x}$ of $D_F U(\mathfrak{g})$ such that  
$$
\Theta_F^{\bf x}(u):= \sum\limits_{
i_{1},\dots,i_r \geq 0} \binom{x_{1}}{i_{1}} \dots
\binom{x_r} {i_r} \,\mbox{ad}(f_{1})^{i_{1}}\dots
\mbox{ad}(f_r)^{i_r}(u) \,f_{1}^{-i_{1}}\dots
f_r^{-i_r},
$$
 for $u \in D_F U(\mathfrak{g})$, where $\binom{x}{i} :=x(x-1)...(x-i+1)/i!$ for $x \in {\mathbb C}$, $i \in {\mathbb Z}_{> 0}$ and $\binom{x}{0} :=1$ (\cite[Section 4]{Mat00}).

\begin{Def}
For a $D_F U(\mathfrak{g})$-module $N$,  we will denote by $\Phi^{\bf x}_{F} N$  the $D_F U(\mathfrak{g})$-module  on $N$ twisted by $\Theta_{F}^{\bf x}$, where   the new action is given by
 $$
u \cdot v^{\bf x} := ( \Theta_{F}^{\bf x}(u)\cdot v)^{\bf x},
$$
for $u \in D_F U(\mathfrak{g})$, $v \in N$. Here $v^{\bf x}$ stands for the element $v$ considered as an element of $\Phi^{\bf x}_{F} N$.
\end{Def}

If $M$ is a
 $\mathfrak{g}$-module  and ${\bf x} \in {\mathbb C}^r $, then  $D_{F}^{\bf x} M:=
\Phi^{\bf x}_{F} {D}_{F} M $ 
is
the {\it twisted localization of $M$
relative to $F$ and $\bf x$}.

\begin{Rem}
Note that for ${\bf x} \in {\mathbb Z}^r$,  we have
$\Theta_{F}^{\bf x}(u) = {\bf f}^{\mathbf x}u {\bf f}^{-\mathbf{x}}$, where ${\bf f}^{\bf x}:=f_{1}^{x_1}...f_r^{x_r} $. Moreover, $D_F^{\bf x} M $ and $D_F M$  are isomorphic for any $\mathfrak{g}$-module $M$.
\end{Rem}

\

The following important property of the twisted localization is a consequence of 
 \cite[Lemma 2.8]{GP18}.

\begin{Pro} \label{loc-ann}
Let $M$ be a weight $\mathfrak{g}$-module  with finite dimensional weight spaces and assume that $F$ is injective on $M$. Then 
 \begin{equation*}Ann_{U(\mathfrak{g})} M = Ann_{U(\mathfrak{g})} {D}_{F}  M\subset Ann_{U(\mathfrak{g})} {D}_{F}^{\bf x}  M,
 \end{equation*} 
 for any ${\bf x} \in {\mathbb C}^r $.
 Moreover,  $Ann_{U(\mathfrak{g})} M \subset Ann_{U(\mathfrak{g})} N,$ for any subquotient $N$ of  $D_{F}^{\bf x} M$.
\end{Pro}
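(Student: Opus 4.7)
The plan is to handle the three parts of the statement separately, relying throughout on the local ad-nilpotence of each $f_i$ on $U(\mathfrak{g})$. First I would establish the equality $\operatorname{Ann}_{U(\mathfrak{g})}M = \operatorname{Ann}_{U(\mathfrak{g})}D_F M$. Since $F$ is injective on $M$, the canonical map $M \hookrightarrow D_F M$ gives the inclusion $\operatorname{Ann}_{U(\mathfrak{g})}D_F M \subseteq \operatorname{Ann}_{U(\mathfrak{g})}M$ for free. For the reverse, every element of $D_F M$ has the form $\mathbf{f}^{-\mathbf{n}}m$ with $m\in M$ and $\mathbf{n}\in \mathbb{Z}^r_{\geq 0}$, and a straightforward induction based on the commutation identity
\[
u\, f_j^{-1} \;=\; \sum_{k\geq 0} f_j^{-k-1}\operatorname{ad}(f_j)^k(u)
\]
(a finite sum by local ad-nilpotence) rewrites $u\cdot \mathbf{f}^{-\mathbf{n}}m$ as a finite sum of terms $c_{\mathbf{k}}\,\mathbf{f}^{-\mathbf{n}-\mathbf{k}}\operatorname{ad}(f_1)^{k_1}\cdots\operatorname{ad}(f_r)^{k_r}(u)\,m$. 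Since $\operatorname{Ann}_{U(\mathfrak{g})}M$ is a two-sided ideal containing $u$, every iterated adjoint $\operatorname{ad}(f_i)^{k_i}(u)$ lies in it as well, so each summand kills $m$.

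For the inclusion $\operatorname{Ann}_{U(\mathfrak{g})}D_F M \subseteq \operatorname{Ann}_{U(\mathfrak{g})}D_F^{\mathbf{x}}M$, I would unpack the twist: $u$ acts on $D_F^{\mathbf{x}}M$ via $\Theta_F^{\mathbf{x}}(u)$, so it annihilates the twisted module iff $\Theta_F^{\mathbf{x}}(u)\cdot v=0$ for every $v\in D_F M$. By the explicit formula for $\Theta_F^{\mathbf{x}}$, the element $\Theta_F^{\mathbf{x}}(u)$ is a finite $D_F U(\mathfrak{g})$-linear combination of monomials
\[
\operatorname{ad}(f_1)^{i_1}\cdots\operatorname{ad}(f_r)^{i_r}(u)\cdot f_1^{-i_1}\cdots f_r^{-i_r}.
\]
The leading factor $\operatorname{ad}(f_1)^{i_1}\cdots\operatorname{ad}(f_r)^{i_r}(u)$ again lies in the two-sided ideal $\operatorname{Ann}_{U(\mathfrak{g})}D_F M=\operatorname{Ann}_{U(\mathfrak{g})}M$ just established, and since $\mathbf{f}^{-\mathbf{i}}v\in D_F M$ for every $v\in D_F M$, each summand kills $v$. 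Hence $u\in \operatorname{Ann}_{U(\mathfrak{g})}D_F^{\mathbf{x}}M$.

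The final clause is now automatic: for any subquotient $N=L/K$ of $D_F^{\mathbf{x}}M$ the standard chain $\operatorname{Ann}_{U(\mathfrak{g})}D_F^{\mathbf{x}}M\subseteq \operatorname{Ann}_{U(\mathfrak{g})}L\subseteq \operatorname{Ann}_{U(\mathfrak{g})}N$ combines with the two inclusions above to give $\operatorname{Ann}_{U(\mathfrak{g})}M\subseteq \operatorname{Ann}_{U(\mathfrak{g})}N$. The main technical point throughout is the finiteness of the sums appearing in the commutation formula and in $\Theta_F^{\mathbf{x}}(u)$; both reduce to the local ad-nilpotence of each $f_i$ on $U(\mathfrak{g})$, which is part of the standing hypothesis on $F$. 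The finite weight multiplicity assumption on $M$ does not actually enter the annihilator computation itself, but is inherited from the setup of \cite[Lemma 2.8]{GP18}, from which the statement is derived.
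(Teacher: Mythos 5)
Your proof is correct. Note, however, that the paper does not actually prove this proposition at all: it simply records it as a consequence of \cite[Lemma 2.8]{GP18}, so your argument is not so much a different route as a self-contained substitute for the outsourced reference. What you supply is exactly the computation one expects to underlie that lemma: the identity $u f_j^{-1}=\sum_{k\ge 0} f_j^{-k-1}\operatorname{ad}(f_j)^k(u)$ (a finite sum by local ad-nilpotence) gives $\operatorname{Ann}_{U(\mathfrak g)}M\subseteq \operatorname{Ann}_{U(\mathfrak g)}D_FM$, the embedding $M\hookrightarrow D_FM$ (which uses the injectivity hypothesis, i.e.\ the absence of $\langle F\rangle$-torsion) gives the reverse inclusion, and the explicit shape of $\Theta_F^{\bf x}(u)$ as a finite sum of terms $\operatorname{ad}(f_1)^{i_1}\cdots\operatorname{ad}(f_r)^{i_r}(u)\,{\bf f}^{-{\bf i}}$, with the ad-monomials again lying in the two-sided ideal $\operatorname{Ann}_{U(\mathfrak g)}D_FM$, gives the inclusion into $\operatorname{Ann}_{U(\mathfrak g)}D_F^{\bf x}M$; the subquotient clause is then formal. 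Your closing observation is also accurate: the finite-dimensionality of weight spaces (indeed, even the weight hypothesis) plays no role in the annihilator computation and is only carried along because the statement is quoted in the setting of \cite{GP18}. The only cosmetic slip is calling $\Theta_F^{\bf x}(u)$ a ``$D_FU(\mathfrak g)$-linear combination'' of those monomials --- it is a $\mathbb C$-linear combination with binomial coefficients --- but this does not affect the argument, since what you actually use is the precise form of each summand.
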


 Applying Proposition \ref{loc-ann} and Proposition \ref{max-ann} to simple highest weight modules with dominant highest weights we obtain the following statement.

\begin{Co}\label{Localization annihilator} Let  $\lambda\in \mathfrak{h}^*$ be dominant,
${\bf x} \in {\mathbb C}^r $ and $F$ be injective on $L(\lambda)$.
If $N\neq 0$ is a simple subquotient of $\mathcal D_{F}^{\bf x} L(\lambda)$ then  
$$Ann_{U(\mathfrak{g})} L(\lambda)=Ann_{U(\mathfrak{g})} D_F^{\bf x}  L(\lambda)=Ann_{U(\mathfrak{g})} N .$$
\end{Co}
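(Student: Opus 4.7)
The plan is to combine Proposition \ref{loc-ann} (which gives one inclusion for free) with the maximality statement in Proposition \ref{max-ann} (which will give the reverse inclusion via central character considerations). Since $\lambda$ is dominant and $L(\lambda)$ is a simple highest weight module, its weight spaces are finite-dimensional, so Proposition \ref{loc-ann} applies and yields
\[
\operatorname{Ann}_{U(\mathfrak{g})} L(\lambda) = \operatorname{Ann}_{U(\mathfrak{g})} D_F L(\lambda) \subseteq \operatorname{Ann}_{U(\mathfrak{g})} D_F^{\bf x} L(\lambda) \subseteq \operatorname{Ann}_{U(\mathfrak{g})} N.
\]
It remains to establish the reverse chain of inclusions.

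The key observation is that the twisting automorphism $\Theta_F^{\bf x}$ fixes the center $\mathcal{Z}$ of $U(\mathfrak g)$ pointwise: for $z\in \mathcal{Z}$ one has $\operatorname{ad}(f_i)(z)=0$ for every $i$, so only the $(i_1,\dots,i_r)=(0,\dots,0)$ term in the defining series of $\Theta_F^{\bf x}(z)$ survives, giving $\Theta_F^{\bf x}(z)=z$. Consequently, central elements act on $D_F^{\bf x} L(\lambda)$ exactly as they act on $D_F L(\lambda)$, namely via the central character $\chi_\lambda$. Hence $U(\mathfrak g)\ker\chi_\lambda$ annihilates $D_F^{\bf x} L(\lambda)$, and therefore also any subquotient $N$. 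Since $F$ is injective on $L(\lambda)$, the module $D_F L(\lambda)$ is nonzero, so $D_F^{\bf x} L(\lambda)$ is nonzero as well; in particular both $\operatorname{Ann}_{U(\mathfrak{g})} D_F^{\bf x} L(\lambda)$ and $\operatorname{Ann}_{U(\mathfrak{g})} N$ are proper two-sided ideals containing $U(\mathfrak g)\ker\chi_\lambda$.

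To finish, we invoke Proposition \ref{max-ann}: because $\lambda$ is dominant, $\operatorname{Ann}_{U(\mathfrak{g})} L(\lambda)$ is the \emph{unique} maximal two-sided ideal of $U(\mathfrak g)$ containing $U(\mathfrak g)\ker\chi_\lambda$. Thus both proper ideals above must be contained in $\operatorname{Ann}_{U(\mathfrak{g})} L(\lambda)$, providing the reverse inclusions and completing the chain of equalities. The only subtle point in the argument is ensuring that $\Theta_F^{\bf x}$ really is trivial on $\mathcal{Z}$ after localization; everything else is a formal consequence of Propositions \ref{loc-ann} and \ref{max-ann}.
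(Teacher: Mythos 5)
Your proof is correct and follows essentially the same route as the paper, which obtains the corollary precisely by applying Proposition \ref{loc-ann} (for the forward chain of inclusions) together with the maximality statement of Proposition \ref{max-ann} for dominant $\lambda$ (for the reverse inclusions). Your explicit verification that $\Theta_F^{\bf x}$ fixes the center, so that $U(\mathfrak g)\ker\chi_\lambda$ still annihilates the twisted localization and its subquotients, is exactly the detail the paper leaves implicit.
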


 \section{Relation Gelfand--Tsetlin modules}\label{section:GT modules} 

 \subsection{Gelfand--Tsetlin modules}
  For any flag $\mathcal F: \mathfrak{gl}_1\subset \cdots \subset \mathfrak{gl}_{n+1}$ we have an induced  flag 
 $U_1\subset \cdots \subset U_{n+1} $ of the  corresponding  universal enveloping algebras.
Let $\mathcal{Z}_{m}$ be the center of $U_{m}$. 
 Following \cite{DFO94}, we call the subalgebra $\Gamma_{\mathcal{F}}$ of  $U:=U_{n+1}$ generated by $\{\mathcal{Z}_m\ |\ m=1,\ldots, n+1 \}$ the \emph{Gelfand--Tsetlin subalgebra} of $U$ (with respect to $\mathcal{F}$).  If the flag is given by left-upper corner inclusions, the corresponding Gelfand--Tsetlin subalgebra will be called the \emph{standard Gelfand--Tsetlin subalgebra} and will be denoted by $\Gamma_{st}$. 
Consider a standard basis $E_{ij}$, $i,j=1, \ldots n+1$ of $\mathfrak{gl}_{n+1}$. Then the standard flag $\mathcal F_{st}$ of $\mathfrak{gl}_{n+1}$
consists of $\mathfrak{gl}_k$, $k=1, \ldots, n+1$, with $\mathfrak{gl}_s$ generated by $E_{ij}$, $i,j=1, \ldots s$ for all $s$.  If $w\in W$ and $\mathcal F=w\mathcal F_{st}$ then $\Gamma_{\mathcal F}=w\Gamma_{st}$ with a natural action of $w$.

\begin{Def}\label{definition-of-GZ-modules} 
 A finitely generated $U$-module $M$ is called a \emph{$\Gamma_{\mathcal{F}}$-Gelfand--Tsetlin module} if $M$ splits into  a direct sum of $\Gamma_{\mathcal{F}}$-modules:
 $$M=\bigoplus_{\mathsf{m}\in \emph{Specm}\Gamma_{\mathcal{F}}}M(\mathsf{m}),$$
 where $$M(\mathsf{m})=\{v\in M\ |\ \mathsf{m}^{k}v=0 \text{ for some }k\geq 0\}.$$
 \end{Def}

Identifying $\mathsf{m}$ with the homomorphism $\chi:\Gamma_{\mathcal{F}} \rightarrow {\mathbb C}$ with $\text{Ker} \chi=\mathsf{m}$, we will call $\mathsf{m}$ a \textit{Gelfand--Tsetlin character} of $M$ if $ M(\mathsf{m}) \neq 0$. 
The dimension $\dim M(\mathsf{m})$ will be called the \emph{Gelfand--Tsetlin multiplicity of $\mathsf{m}$}.

\begin{Rem}
 Let $\tau: \mathfrak{gl}_{n+1}\rightarrow \mathfrak{sl}_{n+1}$ be a natural projection which extends to a homomorphism $\bar{\tau}: U(\mathfrak{gl}_{n+1})\rightarrow U(\mathfrak{sl}_{n+1})$. If $\Gamma$ is a Gelfand-Tsetlin subalgebra of $\mathfrak{gl}_{n+1}$ then  $\bar{\tau}(\Gamma)$ of $\Gamma$ is called a Gelfand-Tsetlin subalgebra of $\mathfrak{sl}_{n+1}$.
\end{Rem}

Unless otherwise is stated, from now on we will assume that  $\Gamma_{\mathcal F}=\Gamma_{st}$ which we simply denote by $\Gamma$. We will refer to $\Gamma$-Gelfand--Tsetlin modules simply as Gelfand--Tsetlin modules.

\subsection{Relation modules}\label{Section: Rel Modules}

A class of \emph{relation}  Gelfand--Tsetlin modules was constructed in \cite{FRZ19}. These modules have  properties similar to finite-dimensional modules \cite{GT50} and to generic Gelfand--Tsetlin modules \cite{DFO94}.  We recall their construction here since it will be used later on.

Set $\mathfrak{V}:=\{(i,j)\ |\ 1\leq j\leq i\leq n+1\}$, and  $\mathcal{R}:=\mathcal{R}^{-}\cup\mathcal{R}^{0}\cup\mathcal{R}^{+}\subset \mathfrak{V}\times\mathfrak{V}$, where  \begin{align*}
 \mathcal{R}^+ &:=\{((i,j);(i-1,t))\ |\ 2\leq j\leq i\leq n+1,\ 1\leq t\leq i-1\}\\
 \mathcal{R}^- &:=\{((i,j);(i+1,s))\ |\ 1\leq j\leq i\leq n,\ 1\leq s\leq i+1\}\\
 \mathcal{R}^{0}&:=\{((n+1,i);(n+1,j))\ |\ 1\leq i\neq j\leq n+1\}
 \end{align*}

Any subset $\mathcal{C}\subseteq \mathcal{R}$ will be called a \emph{set of relations}. 
With  any $\mathcal{C}\subseteq \mathcal{R}$ we associate a directed graph $G(\mathcal{C})$ with the set of vertices  $\mathfrak{V}$, that has an arrow  from vertex $(i,j)$ to $(r,s)$ if and only if $((i,j);(r,s))\in\mathcal{C}$. For convenience, we will picture the set $\mathfrak{V}$ as a triangular tableau with $n+1$ rows, where the $k$-th row is $\{(k,1), \ldots, (k,k)\}$, $k=1, \ldots, n+1$.

For  $v\in \mathbb{C}^{\frac{(n+1)(n+2)}{2}}$ denote by $T(v)$  the image of $v$ via the natural isomorphism between $\mathbb{C}^\frac{(n+1)(n+2)}{2}$ and $\mathbb{C}^{n+1}\times\cdots\times\mathbb{C}^{1}$. If $T(v)=(v^{(n+1)},\ldots,v^{(1)})$ then we  refer to $v^{(k)}=(v_{k1},\ldots,v_{kk})$ as the $k$-th row of $T(v)$. 
Hence, we can picture $T(v)$ as a triangular tableau,  a \textit{Gelfand-Tsetlin tableau} of height $n+1$. Finally, by ${\mathbb Z}_0^\frac{(n+1)(n+2)}{2}$ we will denote the set of vectors $v$ in $\mathbb{Z}^\frac{(n+1)(n+2)}{2}$ such that $v^{(n+1)}={\bf 0}$.

\begin{Def} \cite[Definition 4.2]{FRZ19}
Let $\mathcal{C}$ be a set of relations and $T(L)$ any Gelfand-Tsetlin tableau, where $L=(l_{ij})\in \mathbb{C}^{\frac{(n+1)(n+2)}{2}}$.

\begin{enumerate}[a)]
\item We  say that \emph{$T(L)$ satisfies $\mathcal{C}$} if:
\begin{enumerate}[(i)]
\item $l_{ij}-l_{rs}\in \mathbb{Z}_{\geq 0}$ for any $((i,j); (r,s))\in \mathcal{C}\cap (\mathcal{R}^+\cup \mathcal{R}^0)$;
\item $l_{ij}-l_{rs}\in \mathbb{Z}_{> 0}$ for any $((i,j); (r,s))\in \mathcal{C}\cap \mathcal{R}^-$.
\end{enumerate}
\item We say that \emph{$T(L)$ is a $\mathcal{C}$-realization} if $T(L)$ satisfies $\mathcal{C}$ and for any $1\leq k\leq n$ we have, $l_{ki}-l_{kj}\in \mathbb{Z} $ if only if $(k,i)$ and $(k,j)$ in the same connected component of the undirected graph associated with $G(\mathcal{C})$.
 \item We call $\mathcal{C}$ \emph{noncritical} if for any $\mathcal{C}$-realization $T(L)$ one has $l_{ki}\neq l_{kj}$, $1\leq k\leq n,\ i\neq j$, if $(k,i)$ and $(k,j)$ are in the same connected component of $G(\mathcal{C})$.
\item Suppose that $T(L)$ satisfies $\mathcal{C}$. Then ${\mathcal B}_{\mathcal{C}}(T(L))$  denotes the set of all tableaux of the form $T(L+z)$, $z\in {\mathbb Z}_0^\frac{(n+1)(n+2)}{2}$ satisfying $\mathcal{C}$. Also, $V_{\mathcal{C}}(T(L))$  denotes the complex vector space spanned by ${\mathcal B}_{\mathcal{C}}(T(L))$.
\end{enumerate}
\end{Def}

\

\begin{Rem}
Note that if $T(L)$ satisfies $\mathcal{C}$, and $\mathcal{C}$ is the maximal set of relations satisfied by $T(L)$, then $T(L)$ is a $\mathcal{C}$-realization.
\end{Rem}

\begin{Def}  \cite[Definition 4.4]{FRZ19}
Let $\mathcal{C}$ be a set of relations. We call  $\mathcal{C}$  \emph{admissible} if for any $\mathcal{C}$-realization $T(L)$,  $V_{\mathcal{C}}(T(L))$ has a structure of a $\mathfrak{sl}_{n+1}$-module, endowed with the following action of the generators of $\mathfrak{sl}_{n+1}$:

\begin{equation}\label{Gelfand--Tsetlin formulas}
\begin{split}
E_{k,k+1}(T(w))=-\sum_{i=1}^{k}\left(\frac{\prod_{j=1}^{k+1}( w_{ki}- w_{k+1,j})}{\prod_{j\neq i}^{k}( w_{ki}- w_{kj})}\right)T(w+\delta^{ki}),\\
E_{k+1,k}(T(w))=\sum_{i=1}^{k}\left(\frac{\prod_{j=1}^{k-1}( w_{ki}- w_{k-1,j})}{\prod_{j\neq i}^{k}( w_{ki}- w_{kj})}\right)T(w-\delta^{ki}),\\
H_k(T(w))=\left(2\sum_{i=1}^{k} w_{ki}-\sum_{i=1}^{k-1} w_{k-1,i}-\sum_{i=1}^{k+1} w_{k+1,i}-1\right)T(w).
\end{split}
\end{equation}
 \end{Def}

A pair  $\{(k,i), (k,j)\} \subset \mathfrak{V}$ is an \emph{adjoining pair} for a graph $G$ if $i<j$, there is a path in $G(\mathcal{C})$ from $(k,i)$ to $(k,j)$, and there is no path in $G(\mathcal{C})$ from $(k,i)$ to $(k,j)$ passing trough $(k,t)$ with $i<t<j$.\\

Suppose that $\mathcal{C}$ is a noncritical set of relations whose associated graph $G = G(\mathcal{C})$ satisfies the following conditions: 
\begin{itemize}
\item[(i)]  $G$ is reduced;
\item[(ii)] If there is a path in $G$ connecting $(k,i)$ and $(k,j)$ with tail $(k,i)$ and head $(k,j)$, then $i<j$ (in particular $G$ does not contain loops);
\item[(iii)] If  the graph $G$ contains an arrow connecting $(k,i)$ and $(k+1,t)$  then  $(k+1,s)$ and $(k,j)$  with $i<j$, $s<t$ are not connected in $G$.
\end{itemize}

By \cite[Theorem 4.33]{FRZ19}, the set $\mathcal{C}$  is admissible if and only if
$G$ is a union of connected graphs  satisfying the following 

\

\noindent
\emph{\bf $\Diamond$-Condition}: 
For every adjoining pair $\{(k,i),(k,j)\}$, $1\leq k\leq n$, there exist $p, q$ such that $\mathcal{C}_{1}\subseteq\mathcal{C}$ or, there exist $s<t$ such that $\mathcal{C}_{2}\subseteq\mathcal{C}$, where the graphs associated to $\mathcal{C}_{1}$ and $\mathcal{C}_{2}$ are as follows
\begin{equation*}
\begin{tabular}{c c c c}
\xymatrixrowsep{0.5cm}
\xymatrixcolsep{0.1cm}
\xymatrix @C=0.2em{
  &   &\scriptstyle{(k+1,p)}\ar@*{}[rd]   &   & \\
 \scriptstyle{G(\mathcal{C}_{1})=}  &\scriptstyle{(k,i)}\ar@*{}[rd] \ar@*{}[ru]  &    &\scriptstyle{(k,j)};   &  \\
   &   &\scriptstyle{(k-1,q)}\ar@*{}[ru]   &   & }
&\ \ &
\xymatrixrowsep{0.5cm}
\xymatrixcolsep{0.1cm}\xymatrix @C=0.2em {
   &   &\scriptstyle{(k+1,s)}    &   &\scriptstyle{(k+1,t)}\ar@*{}[rd]&& \\
  \scriptstyle{G(\mathcal{C}_{2})=} &\scriptstyle{(k,i)} \ar@*{}[ru]  & &   & & \scriptstyle{(k,j)} \\
   &   &   &   & &&}
\end{tabular}
\end{equation*}

From now on we will assume that $\mathcal{C}$ is an admissible  set of relations and consider the  $\mathfrak{sl}_{n+1}$-module
$V_{\mathcal{C}}(T(L))$. The simplicity criteria for $V_{\mathcal{C}}(T(L))$ is as follows.

 \begin{Th}\cite[Theorem 5.6]{FRZ19}\label{thm-irr}
 The Gelfand-Tsetlin module $V_{\mathcal{C}}(T(L))$ is simple if and only if $\mathcal{C}$ is the maximal (admissible) set of relations satisfied by $T(L)$.
 \end{Th}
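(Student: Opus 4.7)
The plan is to prove the two implications separately, with the bulk of the work going into the direction asserting simplicity from maximality.

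For the easy direction, I would assume $\mathcal{C}$ is not maximal and exhibit a proper nonzero submodule. If $\mathcal{C}' \supsetneq \mathcal{C}$ is admissible and still satisfied by $T(L)$, then $\mathcal{B}_{\mathcal{C}'}(T(L))$ is a proper subset of $\mathcal{B}_{\mathcal{C}}(T(L))$, since any extra arrow in $G(\mathcal{C}')$ imposes either a new $\mathbb{Z}_{\ge 0}$ or a strict $\mathbb{Z}_{>0}$ inequality that eliminates some shifted tableaux. Admissibility of $\mathcal{C}'$ is exactly the condition ensuring that the Gelfand--Tsetlin formulas preserve the span of $\mathcal{B}_{\mathcal{C}'}(T(L))$, so $V_{\mathcal{C}'}(T(L))$ is a proper nonzero submodule of $V_{\mathcal{C}}(T(L))$, contradicting simplicity.

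For the nontrivial direction, assume $\mathcal{C}$ is maximal and let $N \subseteq V_{\mathcal{C}}(T(L))$ be a nonzero submodule. I would first invoke noncriticality of $\mathcal{C}$: the $H_k$-action in the Gelfand--Tsetlin formulas shows each basis tableau $T(w)$ is a weight vector, and a refined analysis (using that the symmetric functions of a row are the eigenvalues of the generators of $\mathcal{Z}_k$) shows that each $T(w)$ is a $\Gamma$-eigenvector whose character, restricted row by row, is determined by the multiset $\{w_{k1},\ldots,w_{kk}\}$; noncriticality then ensures different basis tableaux have different multisets of row entries, hence different $\Gamma$-characters. Consequently $N$ is spanned by the basis tableaux it contains, and it suffices to show that, starting from any $T(w) \in N$, the cyclic submodule $U(\mathfrak{sl}_{n+1}) T(w)$ equals all of $V_{\mathcal{C}}(T(L))$.

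The core of the argument is a connectedness statement for the graph on $\mathcal{B}_{\mathcal{C}}(T(L))$ whose edges are the pairs $T(w), T(w\pm \delta^{ki})$ for which both tableaux lie in the basis and the corresponding coefficient in the formula for $E_{k,k+1}$ or $E_{k+1,k}$ is nonzero. I would proceed by double induction, first on $k$ from $n$ down to $1$, then on the $\ell^1$-distance between tableaux within row $k$. A single shift $w_{ki}\mapsto w_{ki}\pm 1$ is carried out by $E_{k,k+1}^{\pm 1}$ with a coefficient that vanishes only when $w_{ki}$ meets some $w_{k\pm 1, j}$; the hypotheses that $G(\mathcal{C})$ is reduced, directed consistently ($i<j$ along intra-row paths), and satisfies the $\Diamond$-condition are used to show that whenever such a meeting is forced by $\mathcal{C}$, an alternate path exists going through the $(k-1)$-st or $(k+1)$-st row, which by the inductive hypothesis is already traversable, and hence the required shift can be effected as a commutator of already-available operators. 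Maximality of $\mathcal{C}$ enters to guarantee that no further accidental integral coincidences among the $w_{ki}$ occur outside the graph $G(\mathcal{C})$, so no new spurious vanishing of coefficients can split off a submodule.

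The main obstacle is the last point: controlling all the places where the products $\prod_j(w_{ki}-w_{k\pm 1,j})$ can vanish. This forces a careful local analysis at each adjoining pair, and it is precisely here that the $\Diamond$-condition and the reducedness of $G(\mathcal{C})$ must be used in concert to certify that the tableau orbit under admissible shifts remains connected. Once that is in hand, the rest is bookkeeping on the graph $G(\mathcal{C})$ and on $\mathbb{Z}_0^{(n+1)(n+2)/2}$.
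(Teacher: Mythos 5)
First, a point of reference: the paper does not prove this statement at all --- it is quoted from \cite[Theorem 5.6]{FRZ19} --- so there is no internal proof to compare with, and your sketch has to be measured against the original source. In broad outline it does follow the same strategy as \cite{FRZ19}: separate the basis tableaux by their Gelfand--Tsetlin characters, reduce any submodule to a span of basis tableaux, and then prove a connectedness statement for the basis under the $E_{k,k\pm1}$-action, using the $\Diamond$-Condition to re-route around vanishing coefficients.

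As written, however, the proposal has genuine gaps. The most concrete one is in your ``easy'' direction: it is simply not true that every arrow of $\mathcal{C}'\setminus\mathcal{C}$ eliminates shifted tableaux. A relation that is already implied by a chain of arrows in $G(\mathcal{C})$ removes nothing from $\mathcal{B}_{\mathcal{C}}(T(L))$; the cleanest instance is a relation in $\mathcal{R}^{0}$ between two top-row entries lying in the same connected component of $G(\mathcal{C})$ --- the top row is frozen for every tableau $T(L+z)$ with $z^{(n+1)}=\mathbf{0}$, so adjoining such a relation never changes the basis and yields no proper submodule, even though $\mathcal{C}$ was not the maximal set satisfied by $T(L)$. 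So ``$\mathcal{C}$ not maximal'' does not by itself hand you a strictly smaller basis; you must either invoke the conventions of \cite{FRZ19} (reducedness, equivalence of relation sets, the $\mathcal{C}$-realization hypothesis) to reduce to a relation whose addition genuinely cuts $\mathcal{B}_{\mathcal{C}}(T(L))$, or treat implied and $\mathcal{R}^{0}$ arrows separately; you also need to know that the enlarged set is admissible before its span can be declared a submodule, which is not automatic. Two further points need repair. The reduction of a submodule to a span of basis tableaux does not follow from noncriticality alone: the Gelfand--Tsetlin characters are symmetric functions of the row entries, so you must also rule out two distinct basis tableaux whose rows are nontrivial permutations of one another; this uses the strict ordering of entries inside a connected component forced by the $\Diamond$-Condition together with the realization condition, not just ``different multisets.'' Finally, in the hard direction the assertion that maximality ``guarantees no further accidental integral coincidences outside $G(\mathcal{C})$'' misplaces the roles: integrality is governed entirely by the $\mathcal{C}$-realization condition, while maximality is what prevents a needed coefficient from vanishing on an invariant ``half-space'' of tableaux; and the re-routing/commutator argument, which is the real content of the theorem, is only gestured at and would still have to be carried out.
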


We will call $V_{\mathcal{C}}(T(L))$  a \emph{$\Gamma$-relation Gelfand--Tsetlin module}. Similar construction can be made for any Gelfand--Tsetlin subalgebra $\Gamma_{\mathcal F}=w\Gamma_{st}$, $w\in W$. Applying $w$ to the formulas \eqref{Gelfand--Tsetlin formulas} we obtain explicit action of $\mathfrak{sl}_{n+1}$ on $\Gamma_{\mathcal F}$-relation Gelfand--Tsetlin modules.

\subsection{Classification of highest weight relation modules}
The goal of this section is to describe all simple highest weight modules that can be realized as $V_{\mathcal{C}}(T(L))$ for some admissible set of relations $\mathcal{C}$. 
From now on we fix $\mathfrak g:=\mathfrak{sl}_{n+1}$,  the set of simple roots $\pi=\{\alpha_1, \ldots, \alpha_{n}\}$, and set $\alpha_{r,s}:=\alpha_{r}+\ldots+\alpha_{s}$ for $1\leq r\leq s\leq n$. We use elements $E_{ij}$, $i,j=1, \ldots, n+1$, $i\neq j$, $H_{k}:=E_{kk}-E_{k+1,k+1}$, $k=1, \ldots, n$, as a basis of  $\mathfrak g$.

From  \cite[Proposition 5.9]{FRZ19}, we have  the following statement. 

\begin{Lem}\label{Lem: hw module} 
If $\left<\lambda+\rho_{\pi},\alpha^{\vee} \right> \notin\mathbb{Z}_{\leq 0}$ for all $\alpha \in \Delta_+ \setminus \{ \alpha_{k,n}\ | \ k=1, \dots , n\} $, then the simple highest weight module $L(\lambda)$  is a $\Gamma$-relation module.
\end{Lem}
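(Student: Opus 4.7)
The plan is to exhibit an explicit Gelfand--Tsetlin tableau $T(L^\lambda)$ together with an admissible set of relations $\mathcal{C}\subseteq\mathcal{R}$ such that the relation module $V_{\mathcal{C}}(T(L^\lambda))$ is simple, is a highest weight module, and has weight $\lambda$; then Theorem \ref{thm-irr} will allow me to identify it with $L(\lambda)$.

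First, I would convert $\lambda$ into a top row $(l_{n+1,1},\dots,l_{n+1,n+1})$ by the standard prescription, chosen so that $l_{n+1,k}-l_{n+1,s+1}=\langle \lambda+\rho_\pi,\alpha_{k,s}^{\vee}\rangle$, and then build $T(L^\lambda)$ by ``staircase'' copying along the lower rows, e.g.\ $l_{k,i}:=l_{n+1,i}$ for all admissible $(k,i)$. With this choice the $H_k$-formula in \eqref{Gelfand--Tsetlin formulas} reproduces the weight $\lambda$, and in each $E_{k,k+1}$-numerator a factor $l_{k,i}-l_{k+1,j}$ vanishes, so $T(L^\lambda)$ is a highest weight vector of weight $\lambda$.

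Next I would take $\mathcal{C}$ to be the maximal set of relations satisfied by $T(L^\lambda)$. The hypothesis $\langle \lambda+\rho_\pi,\alpha^{\vee}\rangle\notin\mathbb{Z}_{\leq 0}$ for $\alpha\in \Delta_+\setminus\{\alpha_{k,n}\}$ translates, via the identification above, to the statement that no difference $l_{n+1,i}-l_{n+1,j}$ with $1\leq i<j\leq n$ can be a non-positive integer; equivalently, every $\mathcal{R}^0$-arrow between distinct top-row entries in $G(\mathcal{C})$ other than those terminating at the extreme vertex $(n+1,n+1)$ points ``forward'' (from $(n+1,i)$ to $(n+1,j)$ with $i<j$). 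These top-row relations propagate to the lower rows through the forced $\mathcal{R}^\pm$-arrows produced by the staircase equalities $l_{k,i}=l_{k+1,i}$, and this determines $G(\mathcal{C})$ completely. Noncriticality follows because two distinct entries in the same lower row coincide only when the corresponding top-row entries do.

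It then remains to check the $\Diamond$-condition on $G(\mathcal{C})$. For every adjoining pair $\{(k,i),(k,j)\}$, the staircase guarantees that the vertex $(k+1,i)$ sits directly above $(k,i)$ and an appropriate $(k-1,q)$ below, yielding the configuration $G(\mathcal{C}_1)$; the only potentially missing upper vertex corresponds precisely to the last column, which is excluded by the hypothesis on the roots $\alpha_{k,n}$. Once admissibility is established, Theorem \ref{thm-irr} yields simplicity of $V_{\mathcal{C}}(T(L^\lambda))$, and since it is a highest weight module of weight $\lambda$ it must be isomorphic to $L(\lambda)$. I expect the main obstacle to be exactly this last combinatorial verification: one has to show that the exclusion of the roots $\alpha_{k,n}$, and of no others, is tight, in the sense that any weaker hypothesis would create an adjoining pair with no auxiliary completion, while the stated hypothesis blocks every such bad configuration from appearing.
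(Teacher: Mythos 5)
Your overall plan --- realize the highest weight vector by the column-constant (``staircase'') tableau, equip it with a set of relations, verify admissibility via the $\Diamond$-condition, and then use Theorem \ref{thm-irr} to get simplicity and identify the module with $L(\lambda)$ --- is the natural one; note that the paper itself gives no proof of this lemma but simply quotes \cite[Proposition 5.9]{FRZ19}, so you are in effect reconstructing the cited argument, in the same style the paper later uses for part b) of Theorem \ref{hw module} and for Lemma \ref{lem-key}. However, two concrete points in your verification do not work as written. First, your diamond completion has the wrong arrows: for an adjoining pair $\{(k,i),(k,j)\}$ with $i<j\le k\le n$, the configuration $G(\mathcal{C}_1)$ requires a strict relation $((k,i);(k+1,p))\in\mathcal{R}^-$, and with the column-constant tableau the vertex $(k+1,i)$ ``directly above'' $(k,i)$ only carries the non-strict arrow $((k+1,i);(k,i))$ (the difference is $0$). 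The correct completion is $p=j$ and $q=i$, using $l_{k,i}-l_{k+1,j}=l_{k-1,i}-l_{k,j}=v_i-v_j\in\mathbb{Z}_{>0}$, which is exactly what the hypothesis supplies because $i<j\le n$.

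Second, and more seriously, your account of the role of the excluded roots $\alpha_{k,n}$ is off, and this is precisely where the delicate case lies. The hypothesis imposes nothing on $\left<\lambda+\rho_{\pi},\alpha_{k,n}^{\vee}\right>$, so the regime $\left<\lambda+\rho_{\pi},\alpha_{k,n}^{\vee}\right>\in\mathbb{Z}_{\le 0}$, i.e.\ $v_k-v_{n+1}\in\mathbb{Z}_{\le 0}$, is allowed; in that case the maximal set of relations satisfied by your tableau contains arrows emanating from $(n+1,n+1)$, namely the backward top-row relations $((n+1,n+1);(n+1,k))$ (a $2$-cycle when the pairing is $0$) and relations of the form $((n+1,n+1);(n,t))$. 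Such a graph violates the standing condition (ii) under which the $\Diamond$-criterion of \cite[Theorem 4.33]{FRZ19} is stated, so ``take the maximal satisfied set and check $\Diamond$'' does not go through as claimed; you must discard the relations out of $(n+1,n+1)$ (i.e.\ work with a maximal \emph{admissible} set of relations satisfied by $T(v)$) and then justify that Theorem \ref{thm-irr} still yields simplicity for that choice of $\mathcal{C}$. The correct reason the excluded roots are harmless is not a ``missing upper vertex in the last column'': it is that adjoining pairs subject to the $\Diamond$-condition live in rows $1,\dots,n$ and hence involve only columns $1,\dots,n$, so only the differences $v_i-v_j$ with $i<j\le n$ --- exactly those controlled by the hypothesis --- ever need a completion, while pairs involving column $n+1$ occur only in row $n+1$, where nothing is required. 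Finally, your closing tightness claim is not needed here: the lemma is an ``if'' statement, and necessity is the content of Theorem \ref{hw module}.
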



The following assertion  classifies all simple highest weight $\Gamma$-relation modules.

\begin{Th}
\label{hw module}
The simple highest weight module $L(\lambda)$ is a $\Gamma$-relation module if and only if one of the following conditions holds:
\begin{enumerate}[a)]
\item $\left<\lambda+\rho_{\pi},\alpha^{\vee} \right> \notin\mathbb{Z}_{\leq 0}$, for all $\alpha \in \Delta_+ \setminus \{ \alpha_{k,n}\ | \ k=1, \dots , n\} $.
\item There exist unique $i,j$ with $1\leq i\leq j< n$ such that:
    \begin{enumerate}[i)]
    \item $\left<\lambda+\rho_{\pi},\alpha_k^{\vee} \right> \in\mathbb{Z}_{> 0}$ for each $k >j$,
    \item$\left<\lambda+\rho_{\pi},\alpha^{\vee} \right> \notin\mathbb{Z}_{\leq 0}$ for all $\alpha \in \Delta_+ \setminus \{ \alpha_{i,k} \ | \ k\geq j\}$,
    \item $\left<\lambda+\rho_{\pi},\alpha_{i,n}^{\vee} \right> \in\mathbb{Z}_{\leq 0}$. 
     \end{enumerate}
\end{enumerate}
\end{Th}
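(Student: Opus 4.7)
The plan is to analyze, for each $\lambda$, the Gelfand--Tsetlin tableau $T^{hw}(\lambda)$ corresponding to the highest weight vector of $L(\lambda)$, and determine the maximal set $\mathcal{C}(\lambda)$ of relations it satisfies. By Theorem \ref{thm-irr}, $L(\lambda)$ is a $\Gamma$-relation module precisely when $\mathcal{C}(\lambda)$ is admissible (i.e.\ satisfies the $\Diamond$-condition), in which case $L(\lambda) \simeq V_{\mathcal{C}(\lambda)}(T^{hw}(\lambda))$. The arrows of $\mathcal{C}(\lambda)\cap\mathcal{R}^0$ within the top row $n+1$ are controlled by which pairings $\langle\lambda+\rho_\pi,\alpha^\vee\rangle$ for positive roots $\alpha$ land in $\mathbb{Z}_{\leq 0}$ (the ``bad'' roots); the arrows in $\mathcal{R}^\pm$ come from the fact that the lower rows of $T^{hw}(\lambda)$ duplicate portions of the top row.

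The sufficiency of (a) is Lemma \ref{Lem: hw module}. For the sufficiency of (b), I would first observe that the bad root set is exactly $\{\alpha_{i,j},\alpha_{i,j+1},\ldots,\alpha_{i,n}\}$: condition (iii) gives $\alpha_{i,n}$ bad, and combining this with condition (i) via $\alpha_{i,n}^\vee=\alpha_{i,m}^\vee+\alpha_{m+1}^\vee+\cdots+\alpha_n^\vee$ forces each intermediate $\alpha_{i,m}$ with $j\le m\le n$ to have non-positive integer pairing as well. I would then check that the induced graph $G(\mathcal{C}(\lambda))$ satisfies the $\Diamond$-condition: the bad roots yield a specific interval of arrows in row $n+1$ emanating from position $i$, and each resulting adjoining pair in rows $\leq n$ is closed off either by a $\mathcal{C}_1$-type $\Diamond$ (using the arrows forced by the positive-integer pairings of condition (i) in the rows below) or by a $\mathcal{C}_2$-type $\Diamond$; then Theorem \ref{thm-irr} yields $L(\lambda)\simeq V_{\mathcal{C}(\lambda)}(T^{hw}(\lambda))$.

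For necessity, suppose $\mathcal{C}(\lambda)$ is admissible. If the bad root set is contained in $\{\alpha_{k,n}:k=1,\ldots,n\}$, we are in case (a). Otherwise, fix a bad root $\alpha_{i,j}$ with $j<n$ and choose $i,j$ so that $j$ is minimal for this column $i$. Iterating the $\Diamond$-condition on the adjoining pairs produced in row $n+1$ by $\alpha_{i,j}$ and successive bad roots forces every $\alpha_{i,m}$ with $j\le m\le n$ to be bad, and simultaneously forces $\langle\lambda+\rho_\pi,\alpha_k^\vee\rangle\in\mathbb{Z}_{>0}$ for $k>j$ (conditions (i) and (iii)); excluding any bad root outside $\{\alpha_{i,k}:k\ge j\}$ or a second bad column $i'\neq i$ produces a $\Diamond$-violating configuration, giving (ii) and the uniqueness of $(i,j)$.

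The main obstacle is the detailed combinatorial analysis of the $\Diamond$-condition in the presence of multiple bad roots: one must carefully trace which arrows are forced in $G(\mathcal{C}(\lambda))$ by admissibility, and verify case-by-case that no configuration outside (a) and (b) admits a valid $\Diamond$. A secondary subtlety is confirming that $V_{\mathcal{C}(\lambda)}(T^{hw}(\lambda))$ coincides with $L(\lambda)$ rather than a proper subquotient, which follows from simplicity via Theorem \ref{thm-irr} together with the maximality of $\mathcal{C}(\lambda)$ as the set of relations satisfied by the highest weight tableau.
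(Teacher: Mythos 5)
Your reduction in the first paragraph is where the argument breaks. You fix one canonical tableau $T^{hw}(\lambda)$ (lower rows duplicating the leading entries of the top row, top row ordered by $\lambda$) and assert, citing Theorem \ref{thm-irr}, that $L(\lambda)$ is a $\Gamma$-relation module \emph{precisely when} the maximal set $\mathcal{C}(\lambda)$ of relations satisfied by this tableau is admissible. Theorem \ref{thm-irr} gives only the ``if'' half: when $\mathcal{C}(\lambda)$ is admissible (and maximal) the module $V_{\mathcal{C}(\lambda)}(T^{hw}(\lambda))$ is simple, hence isomorphic to $L(\lambda)$. It does not rule out that $L(\lambda)$ is realized as $V_{\mathcal{D}}(T(w))$ for a \emph{differently arranged} tableau: the entries of each row, including row $n+1$, may be permuted, and different arrangements carry different relation graphs. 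This is exactly what happens in the paper: in case b) the realization of $L(\lambda)$ uses a rearranged tableau whose top row is $(v_1,\dots,v_{i-1},v_{j+1},\dots,v_{n+1},v_i,\dots,v_j)$ and whose lower rows are filled by an explicit three-case rule. The reason is that for your naive tableau the bad roots $\alpha_{i,m}$, $m\geq j$, force top-row arrows running from a higher column position to a lower one (since $v_{m+1}-v_i\in\mathbb{Z}_{>0}$ with $m+1>i$), together with crossing arrows between rows $n$ and $n+1$; such a graph fails the ordering and non-crossing hypotheses under which the $\Diamond$-criterion is stated, so your sufficiency plan for b) --- ``check the $\Diamond$-condition for $G(\mathcal{C}(\lambda))$'' --- cannot be carried out as described. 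The construction of the correct arrangement and the four-case verification that its maximal relation set satisfies the $\Diamond$-condition is the main content of the paper's sufficiency proof and is missing from your outline. For the same reason your necessity argument starts from the wrong hypothesis: one must begin with an arbitrary realization $L(\lambda)\simeq V_{\mathcal{C}}(T(v))$, take $T(v)$ without loss of generality to be the highest weight vector \emph{of that realization}, and extract the constraints on $\lambda$ from admissibility of $\mathcal{C}$ (the paper does this via the adjoining pair $\{(j,i),(j,j)\}$ in row $j$ and iteration of the $\Diamond$-condition), not with the assumption that the relation set of one preferred tableau is admissible.

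Several ingredients of your sketch do match the paper: case a) via Lemma \ref{Lem: hw module}; the identification of the bad roots in case b) as exactly $\{\alpha_{i,m}\ :\ j\leq m\leq n\}$ from conditions (i) and (iii); the idea that adjoining pairs must be closed by $\mathcal{C}_1$- or $\mathcal{C}_2$-type configurations; and the final step that a simple relation module generated by a vector killed by all $E_{k,k+1}$ and of weight $\lambda$ is $L(\lambda)$. One further imprecision: the $\Diamond$-condition constrains adjoining pairs only in rows $1\leq k\leq n$, so speaking of ``adjoining pairs produced in row $n+1$'' is not meaningful; the relevant pairs are those created in the rows $\leq n$ that contain the repeated entries.
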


\begin{proof}
Suppose that $L(\lambda)$ is a $\Gamma$-relation  Gelfand--Tsetlin module. Then $L(\lambda) \simeq V_{\mathcal{C}}(T(v))$ for some tableau $T(v)$. As $L(\lambda)$ is simple, we can assume without loss of generality that $T(v)$ is a highest weight vector of weight $\lambda$ and $\mathcal{C}$ is the maximal set of relations satisfied by $T(v)$ (cf. Theorem \ref{thm-irr}). Then $E_{k,k+1}(T(v))=0$ for all $k=1, \dots, n$, which implies  $v_{ij}=v_{kj}$ for all $1\leq i,k<j\leq n+1$ (see (\ref{Gelfand--Tsetlin formulas})), and $\mathcal{C} \supset \{((i+1,j);(i,j))\ |\ 1\leq j\leq i\leq n\} $. Set $v_{j}:=v_{n+1,j}$, and note that $v_i-v_{j+1}=\left<\lambda+\rho_{\pi},\alpha_{i,j}^{\vee} \right>$ for all $1\leq i\leq j\leq n$. Now suppose that for some  $\alpha \in \Delta_+ \setminus \{ \alpha_{k,n}\ | \ k=1, \dots , n\} $ we have  $\left<\lambda+\rho_{\pi},\alpha^{\vee} \right> \in\mathbb{Z}_{\leq 0}$. Hence, there exists a pair $(r,s)$ with $1\leq r\leq s<n$ such that $v_r-v_{s+1}=\left<\lambda+\rho_{\pi},\alpha_{r,s}^{\vee} \right> \in \mathbb{Z}_{<0}$. This shows that $I=\{(r,s)\ |\ v_s-v_r \in \mathbb{Z}_{>0} \mbox{ and } 1\leq r< s<n    \}\neq \emptyset$. Now choose  $i=min\{r \ |\ (r,s)\in I \} $ and $j=min\{s\ | \ (i,s) \in I \}$. Then  $(j,i)$ and $(j,j)$ form an adjoining pair and the associated graph  looks as follows: 

$$
\xymatrix @R=0.5cm @C=0.1cm {\scriptstyle{(j+1,i)}\ar[dr] & & \scriptstyle{(j+1,j)} \ar[dr] & \\ 
&\scriptstyle{(j,i)} \ar[dr]& & \scriptstyle{(j,j)}  \\ 
 & & \scriptstyle{(j-1,i)} & }
$$
On the other hand, $\mathcal{C}$ is an admissible set of relations and, hence, by the $\Diamond$-Condition the associated graph must satisfy 
$$
\xymatrix @R=0.5cm @C=0.1cm {\scriptstyle{(j+1,i)}\ar[dr] & & \scriptstyle{(j+1,j)} & & \scriptstyle{(j+1,j+1)} \ar[dl] \\ 
&\scriptstyle{(j,i)} \ar[ur]& & \scriptstyle{(j,j)} \ar[dl] \\ 
 & & \scriptstyle{(j-1,i)} & }
$$
Therefore, $v_j-v_{j+1} \in\mathbb{Z}_{>0}$ and $v_{j+1}-v_i \in\mathbb{Z}_{>0}$. 
Repeating the same argument, we conclude that  $v_k-v_{k+1} \in\mathbb{Z}_{>0}$ for all $k=j,\dots, n$ and $v_{n+1}-v_i \in\mathbb{Z}_{\geq 0}$. Consequently we have  $v_r-v_s \in \mathbb{Z}_{>0}$ for all $j\leq r <s \leq n+1$. On the other hand, from the choice of $i$ we have that $v_r-v_s \notin \mathbb{Z}_{\leq 0}$ for each $1\leq r<i$ and  $s>r$. From the  definition of $j$ we also have  $v_i-v_s \notin \mathbb{Z}_{\leq 0}$ for each $i<s<j$. Given that $v_s-v_i \in \mathbb{Z}_{\geq 0}$ for all $s \geq j$ and $v_i-v_r \notin \mathbb{Z}_{\leq 0}$ for all $i<r<j$, we obtain $v_s-v_r \notin \mathbb{Z}_{\leq 0}$ for all $s, r$ such that $i<r<j\leq s\leq n+1$. Finally, $v_s-v_r \notin \mathbb{Z}_{\leq 0}$ for all $s, r$ such that $i<r<s<j$. Indeed, assume that $v_s-v_r \in \mathbb{Z}_{< 0}$ for some $i<r<s<j$. Then there exists $r<j'\leq s$ such that $v_k-v_{k+1} \in\mathbb{Z}_{>0}$ for all $k=j',\dots, n$. In particular, $v_s-v_j \in\mathbb{Z}_{>0}$, which is a contradiction. 

Conversely, suppose first that  a) holds. Then by Lemma \ref{Lem: hw module} we have that $L(\lambda)$ is a $\Gamma$-relation module. Now assume b) and let $v_s-v_{s+1}= \left<\lambda+\rho_{\pi},\alpha_s^{\vee} \right> $ for each $1\leq s\leq n$ such that  $\displaystyle \sum_{s=1}^{n+1}v_s=-\binom{n+1}{2}$. Then for some $1\leq i\leq j<n$ the following conditions are satisfied: 
\begin{itemize}
    \item $v_{n+1}-v_i\in \mathbb{Z}_{\geq 0}$,
    \item $v_r-v_s\in \mathbb{Z}_{> 0}$ for all $j+1\leq r <s \leq n+1$,
     \item $v_r-v_s\notin \mathbb{Z}_{\leq 0}$ for all $1\leq r \leq j$,  $r< s  \leq n+1$ and $r,s\neq i$,
     \item $v_r-v_i\in \mathbb{Z}_{>0}$ for all $j+1\leq r \leq n$,
     \item $v_r-v_i\notin \mathbb{Z}_{\leq 0}$ for all $1\leq r \leq j$ and $r\neq i$.
\end{itemize}

Let $T(v)$ be a Gelfand--Tsetlin tableau with entries
\begin{align*}
v_{rs}=\begin{cases}
v_s, & \ \ \text{if}\ \  1 \leq s <i \text{ or } i\leq r \leq j,\\
v_{s+j-i+1}, & \ \ \text{if}\ \  i\leq s < r+i-j,\\
v_{s-r+j}, & \ \ \text{if}\ \ s\geq r+i-j,
\end{cases}
\end{align*}

for $1\leq s \leq r \leq n+1$, and let $\mathcal{C}$ be the maximal set of relations satisfied by $T(v)$. To prove that $\mathcal{C}$ is admissible we  consider the following cases.

\textbf{Case I:}    Suppose that $(k,r)$ and $(k,s)$ form an adjoining pair for  some $1\leq k<j$. Then we have an indecomposable subset $\mathcal{C}'$ of $\mathcal{C}$ with the  associated graph 

$$
\xymatrix @R=0.5cm @C=0.1cm {  & \scriptstyle{(k+1,s)} \ar[dr] & \\ 
\scriptstyle{(k,r)} \ar[dr] \ar[ur]& & \scriptstyle{(k,s)}  \\ 
  & \scriptstyle{(k-1,r)}  \ar[ur]& }
$$
The same happens in the following cases:
\begin{itemize}
    \item  $j\leq k\leq n$ and $1< s <i$;
    \item  $j< k\leq n$ and $1\leq r < s=i$;
    \item $j+1< k\leq n$ and $i< s <n+i-j$ where  $r=s-1$.
\end{itemize}

\textbf{Case II:} Suppose $j\leq k\leq n$, $1\leq r < i$ and $s \geq k+i-j$. If  $(k,r)$ and $(k,s)$ form an adjoining pair, then we have an indecomposable subset $\mathcal{C}'$ of $\mathcal{C}$ with the following associated graph:
  $$
\xymatrix @R=0.5cm @C=0.1cm {  &  &  & \scriptstyle{(k+1,s+1)} \ar[dl] \\ 
\scriptstyle{(k,r)} \ar[dr] \ar[urrr]& & \scriptstyle{(k,s)}  \\ 
  & \scriptstyle{(k-1,r)}  \ar[ur]& }
$$
The graph is the same in the case $k=j$ and $r \geq i $.

\textbf{Case III:}  Fix $j< k\leq n$ with $r \geq k+i-j$. If  $(k,r)$ and $(k,s)$ form an  adjoining pair, then the associated graph  of the indecomposable subset $\mathcal{C}'$ of $\mathcal{C}$ is as follows: 
$$
\xymatrix @R=0.5cm @C=0.5cm {   & &   &  & \scriptstyle{(k+1,s+1)} \ar[dl] \\ 
&\scriptstyle{(k,r)} \ar[dl] \ar[urrr] & & \scriptstyle{(k,s)} & \\ 
\scriptstyle{(k-1,r-1)}  \ar[urrr]  & &  &}
$$

\textbf{Case IV:}  Finally, let $j< k\leq n$,  $s = k+i-j$ and $r=s-1$. In this case $(k,r)$ and $(k,s)$ is an adjoining pair with the associated graph  as follows: 
$$
\xymatrix @R=0.5cm @C=0.1cm {  & \scriptstyle{(k+1,s)}  &  & \scriptstyle{(k+1,s+1)} \ar[dl] \\ 
\scriptstyle{(k,r)}  \ar[ur]& & \scriptstyle{(k,s)} }
$$
The same happens in the case: $j< k\leq n$ and $r = k+i-j-1$ for each adjoining pair $\{(k,r),(k,s)\}$. 

Therefore $\mathcal{C}$ is admissible and $V_{\mathcal{C}}(T(v))$ is a simple module by Theorem \ref{thm-irr}. Hence,  $V_{\mathcal{C}}(T(v))=U(\mathfrak{g})T(v)$, $E_{k,k+1}(T(v))=0$ and $H_k(T(v))=\left<\lambda,\alpha_k^{\vee} \right> T(v)$ for all $k=1, \dots, n$. 

\end{proof}

In particular, applying Theorem \ref{hw module} and   \cite[Proposition 8.5]{Mat00} we get the following criterion for infinite-dimensional simple highest weight modules to be bounded $\Gamma$-relation modules.

\begin{Co} 
\label{prop:relationbounded}
The simple  highest weight module $L(\lambda)$  is a bounded infinite-dimensional $\Gamma$-relation module if and only if one of the following conditions holds:
\begin{enumerate}[a)]
\item $\left<\lambda+\rho_{\pi},\alpha_n^{\vee} \right> \notin\mathbb{Z}_{> 0}$,  and $\left<\lambda+\rho_{\pi},\alpha_k^{\vee} \right> \in\mathbb{Z}_{> 0}$ for all $k< n$;
\item $\left<\lambda+\rho_{\pi},\alpha_1^{\vee} \right> \notin\mathbb{Z}$,  and $\left<\lambda+\rho_{\pi},\alpha_k^{\vee} \right> \in\mathbb{Z}_{> 0}$ for all $k> 1$;  
\item    $\left<\lambda+\rho_{\pi},\alpha_1^{\vee} \right> \in\mathbb{Z}_{< 0}$, $\left<\lambda+\rho_{\pi},\alpha_{1n}^{\vee} \right> \in\mathbb{Z}_{\leq 0}$, and $\left<\lambda+\rho_{\pi},\alpha_k^{\vee} \right> \in\mathbb{Z}_{> 0}$ for all $k> 1$;
\item There exists a unique $i\in \{2,\dots ,n-1 \}$ such that $\left<\lambda+\rho_{\pi},\alpha_i^{\vee} \right> \in\mathbb{Z}_{< 0}$, $\left<\lambda+\rho_{\pi},\alpha_{i-1,i}^{\vee}\right> \in\mathbb{Z}_{>0}$, $\left<\lambda+\rho_{\pi},\alpha_{in}^{\vee}\right> \in\mathbb{Z}_{\leq 0}$, and $\left<\lambda+\rho_{\pi},\alpha_k^{\vee} \right> \in\mathbb{Z}_{> 0}$ for all  $k\neq i$;
\item  There exists a unique $i\neq n$ such that $\left<\lambda+\rho,\alpha_i^{\vee} \right> \notin\mathbb{Z}$, $\left<\lambda+\rho_{\pi},\alpha_{i+1}^{\vee} \right> \notin\mathbb{Z}$, $\left<\lambda+\rho,\alpha_{i,i+1}^{\vee}\right> \in\mathbb{Z}_{>0}$, and $\left<\lambda+\rho_{\pi},\alpha_k^{\vee} \right> \in\mathbb{Z}_{> 0}$ for all $k\neq i, i+1$.
\end{enumerate}
\end{Co}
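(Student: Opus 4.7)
The plan is to overlay Theorem \ref{hw module} with Mathieu's boundedness criterion (\cite[Proposition 8.5]{Mat00}) for infinite-dimensional simple highest weight $\mathfrak{sl}_{n+1}$-modules. Theorem \ref{hw module} already describes all possible numerical patterns of $\langle\lambda+\rho_\pi,\alpha^\vee\rangle$ that produce $\Gamma$-relation modules: either branch (a), in which only the tail roots $\alpha_{k,n}$ may have pairing in $\mathbb{Z}_{\leq 0}$, or branch (b), in which the non-dominant pairings are confined to a single row $\{\alpha_{i,k} : k\geq j\}$ with $\langle\lambda+\rho_\pi,\alpha_{i,n}^\vee\rangle\in\mathbb{Z}_{\leq 0}$. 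Mathieu's criterion in type $A$ imposes a second, very restrictive, numerical condition on $\lambda$, compatible with only a few configurations. The strategy is to intersect these two conditions and read off the five families listed.

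Carrying this out, I expect cases (a), (b), and (e) of the corollary to emerge from the first branch of Theorem \ref{hw module}: respectively, when the "open" simple direction is $\alpha_n$, when $\alpha_1$ is non-integer, and when two adjacent simple roots $\alpha_i,\alpha_{i+1}$ are both non-integer but their sum $\alpha_{i,i+1}$ pairs to a positive integer (so that all composite roots containing both of them remain positive integers). Cases (c) and (d) should emerge from the second branch, necessarily forced into $j=i$, corresponding respectively to $i=1$ and $i\in\{2,\dots,n-1\}$. The additive decomposition of coroots in type $A$, $\alpha_{r,s}^\vee=\alpha_r^\vee+\cdots+\alpha_s^\vee$, is the main tool: it lets one propagate integrality and sign information from a few simple roots to every composite root, converting both Theorem \ref{hw module} and Mathieu's criterion into combinatorial constraints on the sequence $(\langle\lambda+\rho_\pi,\alpha_1^\vee\rangle,\dots,\langle\lambda+\rho_\pi,\alpha_n^\vee\rangle)$.

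The converse direction in each of the five cases is a routine verification that the given pattern simultaneously satisfies one branch of Theorem \ref{hw module} (so $L(\lambda)$ is $\Gamma$-relation) and Mathieu's criterion with $L(\lambda)$ infinite-dimensional. The main obstacle is the forward direction inside branch (b) of Theorem \ref{hw module}: ruling out the possibility $j>i$ requires combining $\langle\lambda+\rho_\pi,\alpha_{i,n}^\vee\rangle\in\mathbb{Z}_{\leq 0}$ with the strict positivity of $\langle\lambda+\rho_\pi,\alpha_k^\vee\rangle$ for $k>j$ and Mathieu's bound on how many non-positive-integer pairings may coexist in a bounded module, so as to force the "open row" to collapse to a single position. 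Beyond this, the remaining effort is book-keeping: enumerating the admissible configurations of $(\langle\lambda+\rho_\pi,\alpha_k^\vee\rangle)_{k=1}^{n}$ permitted by the $\Diamond$-Condition and verifying that the five listed cases are exhaustive and mutually exclusive.
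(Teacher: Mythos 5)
Your plan is exactly the paper's argument: the corollary is stated there as an immediate consequence of Theorem \ref{hw module} combined with Mathieu's boundedness criterion \cite[Proposition 8.5]{Mat00}, intersected via the additivity $\alpha_{r,s}^{\vee}=\alpha_r^{\vee}+\cdots+\alpha_s^{\vee}$ just as you describe. Your case-by-case bookkeeping (cases a, b, e from branch a of Theorem \ref{hw module}; cases c, d from branch b with $j=i$ forced by boundedness) fills in the same routine verification the paper leaves implicit, so the proposal is correct and essentially identical in approach.
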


\begin{Rem} For  $i=1, 2, \dots, n$, the highest weight module $L(-\omega_i)$ is bounded but  not a $\Gamma$-relation module. 
\end{Rem}

\begin{Co} \label{verma module}
Let $\lambda \in \mathfrak{h}^*$. The following conditions are equivalent
\begin{enumerate}[a)]
\item The Verma module $M(\lambda)$ is a simple $\Gamma$-relation module; 
\item $\left<\lambda+\rho_{\pi},\alpha^{\vee} \right> \notin\mathbb{Z}$, for any $\alpha \in \Delta_+ \setminus\{ \alpha_{k,n}\ |\  k=1, \dots, n\}$, and $\left<\lambda+\rho_{\pi},\alpha_{k,n}^{\vee} \right> \notin\mathbb{Z}_{> 0}$  for  $k=1, \dots, n$.
\end{enumerate}
\end{Co}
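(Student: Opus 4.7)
The plan is to reduce the corollary to Theorem~\ref{hw module} by first observing that a Verma module $M(\lambda)$ is simple precisely when $L(\lambda)=M(\lambda)$, i.e.\ when $\lambda$ is antidominant. Recall the classical Bernstein--Gelfand--Gelfand criterion: $M(\lambda)$ is simple if and only if $\langle\lambda+\rho_{\pi},\alpha^{\vee}\rangle\notin\mathbb{Z}_{>0}$ for all $\alpha\in\Delta_{+}$. Under this assumption the simple quotient $L(\lambda)$ coincides with $M(\lambda)$, so to check that $M(\lambda)$ is a simple $\Gamma$-relation module it suffices to apply the classification in Theorem~\ref{hw module} to $L(\lambda)$ and intersect the resulting condition with the antidominance requirement.

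For the implication (a) $\Rightarrow$ (b), assume $M(\lambda)$ is simple and a $\Gamma$-relation module. Then $\lambda$ is antidominant and $L(\lambda)=M(\lambda)$ satisfies one of the two alternatives of Theorem~\ref{hw module}. Case (b) of that theorem is excluded: condition (b)(i) there requires $\langle\lambda+\rho_{\pi},\alpha_{k}^{\vee}\rangle\in\mathbb{Z}_{>0}$ for $k>j$, directly contradicting antidominance (applied to the simple root $\alpha_{k}$). Hence case (a) of Theorem~\ref{hw module} holds, giving $\langle\lambda+\rho_{\pi},\alpha^{\vee}\rangle\notin\mathbb{Z}_{\leq 0}$ for every $\alpha\in\Delta_{+}\setminus\{\alpha_{k,n}\mid k=1,\dots,n\}$. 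Combining this with antidominance we get, for such $\alpha$, $\langle\lambda+\rho_{\pi},\alpha^{\vee}\rangle\notin\mathbb{Z}_{\leq 0}\cup\mathbb{Z}_{>0}=\mathbb{Z}$, while for $\alpha=\alpha_{k,n}$ antidominance alone forces $\langle\lambda+\rho_{\pi},\alpha_{k,n}^{\vee}\rangle\notin\mathbb{Z}_{>0}$. These are exactly the conditions in (b).

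For the converse (b) $\Rightarrow$ (a), suppose the numerical conditions in (b) hold. For any $\alpha\in\Delta_{+}$, either $\alpha\notin\{\alpha_{k,n}\}$ and the hypothesis gives $\langle\lambda+\rho_{\pi},\alpha^{\vee}\rangle\notin\mathbb{Z}$ (hence not in $\mathbb{Z}_{>0}$), or $\alpha=\alpha_{k,n}$ and the hypothesis gives $\langle\lambda+\rho_{\pi},\alpha^{\vee}\rangle\notin\mathbb{Z}_{>0}$. Thus $\lambda$ is antidominant and $M(\lambda)=L(\lambda)$ is simple. Moreover $\notin\mathbb{Z}$ trivially implies $\notin\mathbb{Z}_{\leq 0}$, so the first clause of Theorem~\ref{hw module}(a) is satisfied, and therefore $L(\lambda)=M(\lambda)$ is a $\Gamma$-relation module.

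The argument is essentially a matter of bookkeeping; no new module-theoretic construction is needed. The only subtle point is ruling out alternative (b) of Theorem~\ref{hw module} in the forward direction, and I would expect this to be the main place where one has to be careful — specifically, one must invoke antidominance at the simple root $\alpha_{j+1}$ (which lies in the range $k>j$ indexed by condition (b)(i) of Theorem~\ref{hw module}) to reach the contradiction. Once that case is eliminated, everything else follows by straightforwardly intersecting the conditions ``$\notin\mathbb{Z}_{\leq 0}$'' (relation structure) with ``$\notin\mathbb{Z}_{>0}$'' (simplicity of the Verma).
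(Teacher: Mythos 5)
Your proposal is correct and follows exactly the route the paper intends: the corollary is stated as an immediate consequence of Theorem \ref{hw module}, obtained by intersecting its condition a) with the standard criterion that $M(\lambda)$ is simple if and only if $\left<\lambda+\rho_{\pi},\alpha^{\vee}\right>\notin\mathbb{Z}_{>0}$ for all $\alpha\in\Delta_+$ (which also rules out alternative b) of the theorem via its condition (i) at a simple root $\alpha_k$ with $k>j$). Your bookkeeping, including the observation that ``$\notin\mathbb{Z}_{\leq 0}$'' together with ``$\notin\mathbb{Z}_{>0}$'' yields ``$\notin\mathbb{Z}$'' off the roots $\alpha_{k,n}$, is precisely the intended argument.
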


\begin{Rem}
By Corollary \ref{verma module}, $M(\lambda)$ is a simple $\Gamma$-relation module if and only if $M(\lambda) \simeq V_{\mathcal{C}}(T(v))$, where $T(v)$ is a generic tableau and  $\mathcal{C}$ is the maximal set of relations satisfied by $T(v)$. This generalizes   \cite[Proposition 1]{Maz98} (cf. \cite[Corollary 3.7]{AFR17} and  \cite[Example 5.10]{FRZ19}). 
\end{Rem}

  Since $L(\lambda)$ has finite-dimensional weight subspaces, then  it is 
 a $w\Gamma$-Gelfand--Tsetlin module for any  $w\in W$. 
 Denote by $\widetilde{W}$ the extension of the Weyl group of $\mathfrak g$ by the symmetries of the root system.  
 Let $w\in \widetilde{W}$ and $\mathfrak b=w\mathfrak b_{st}$. Then, clearly,  $L_{\mathfrak b}(w\lambda)$ is a $w\Gamma$-relation Gelfand--Tsetlin module if and only if $w\lambda$ satisfies the conditions of Corollary \ref{prop:relationbounded} for 
 $w\alpha_k^{\vee}$, $k=1, \ldots, n$.  In particular, we obtain 
 
 \begin{Co}\label{cor-outras Borel} Let $\lambda\in \mathfrak{h}^*$ and  $\left<\lambda+\rho_{\pi},\beta_j^{\vee} \right> \in\mathbb{Z}_{> 0}$ for some simple roots $\beta_j$, $j=1, \ldots, t $. Consider the minimal $w\in W$  such that  $\left<w(\lambda+\rho_{\pi}), \beta_j^{\vee} \right> \in\mathbb{Z}_{\leq 0}$ for all $j=1, \ldots, t$, and 
 set $\Gamma_{\mathcal F}=w\Gamma$. Then 
 the  simple highest weight module $L(\lambda)$  is a bounded infinite-dimensional  $\Gamma_{\mathcal F}$-relation Gelfand--Tsetlin module if and only if $\lambda$ satisfies one of the conditions of Corollary \ref{prop:relationbounded}.

 \end{Co}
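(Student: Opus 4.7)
The overall plan is to reduce the question about the $\Gamma_{\mathcal F}$-relation structure to the remark immediately preceding the corollary by a $W$-equivariance argument. Since the simple $\mathfrak g$-module $L(\lambda)$ has finite-dimensional weight spaces, its being a $w\Gamma$-Gelfand--Tsetlin module is automatic; the content lies entirely in the finer relation property. The key is to realize $L(\lambda)$ as a highest weight module with respect to the twisted Borel $w\mathfrak b_{st}$, and then transport the classification of Corollary \ref{prop:relationbounded} through the $w$-action.

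The core step is to establish an isomorphism $L(\lambda) \cong L_{w\mathfrak b_{st}}(w\lambda)$ of abstract $\mathfrak g$-modules. The hypothesis $\langle \lambda+\rho_\pi, \beta_j^\vee\rangle \in \mathbb Z_{>0}$ ensures that $L(\lambda)$ is integrable along the $\mathfrak{sl}_2$-subalgebra associated to each $\beta_j$. Consequently, the simple reflections $s_{\beta_j}$ lift to well-defined invertible operators on the extremal weight spaces of $L(\lambda)$; by the minimality of $w$, a compatible ordered product of these operators realizes $w$ on the highest weight vector $v_\lambda$. The resulting nonzero extremal vector has $\mathfrak h$-weight $w\lambda$ and is annihilated by every positive root space of $w\mathfrak b_{st}$, so that $L(\lambda)$ is a $w\mathfrak b_{st}$-highest weight module of highest weight $w\lambda$. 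Simplicity of $L(\lambda)$ then forces $L(\lambda) \cong L_{w\mathfrak b_{st}}(w\lambda)$.

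Once this identification is in hand, the remark preceding the corollary identifies $L_{w\mathfrak b_{st}}(w\lambda)$ as a $w\Gamma$-relation Gelfand--Tsetlin module precisely when $w\lambda$ satisfies the conditions of Corollary \ref{prop:relationbounded} read against the simple coroots $w\alpha_k^\vee$ of $w\mathfrak b_{st}$ and the corresponding $\rho_{w\pi}=w\rho_\pi$. Since all such conditions are expressed in terms of pairings $\langle w\lambda+\rho_{w\pi},\, w\alpha_k^\vee\rangle = \langle w(\lambda+\rho_\pi),\, w\alpha_k^\vee\rangle = \langle \lambda+\rho_\pi,\, \alpha_k^\vee\rangle$, they are invariant under the simultaneous $w$-twist and therefore reduce exactly to the conditions of Corollary \ref{prop:relationbounded} applied to $\lambda$ and $\alpha_k^\vee$. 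Boundedness and infinite-dimensionality are intrinsic properties of the abstract $\mathfrak g$-module $L(\lambda)$, so they transfer through the isomorphism without change.

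The principal obstacle is the first step: producing a $w\mathfrak b_{st}$-highest weight vector of weight exactly $w\lambda$ inside $L(\lambda)$, and ensuring that this vector does indeed generate the module. Here one must exploit the minimality of $w$ to guarantee that a reduced expression implementing $w$ can be realized by iterated $\mathfrak{sl}_2$-exponentials applied to $v_\lambda$ without the intermediate images collapsing. This is where the precise choice of $w$ in the hypothesis is essential, and handling it cleanly ties the $W$-geometry to the integrability pattern of $\lambda$ dictated by the $\beta_j$'s.
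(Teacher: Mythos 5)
Your proposal is correct and takes essentially the same route as the paper: the paper's proof is exactly the identification $L_{w\mathfrak b_{st}}(w\lambda)\simeq L(\lambda)$ followed by an application of Corollary \ref{prop:relationbounded} to the weight $w\lambda$ via the remark preceding the statement, which is precisely your reduction. You merely make explicit what the paper leaves implicit, namely the integrability/extremal-vector justification of the isomorphism and the $W$-invariance computation $\left<w(\lambda+\rho_{\pi}), w\alpha_k^{\vee}\right>=\left<\lambda+\rho_{\pi},\alpha_k^{\vee}\right>$ that collapses the twisted conditions back to those on $\lambda$.
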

 
 \begin{proof}
 Since $L_{w\mathfrak b_{st}}(w\lambda)\simeq L(\lambda)$,
 it is sufficient to consider the weight $w\lambda$ and apply Corollary \ref{prop:relationbounded}.
\end{proof}

Corollary \ref{cor-outras Borel}  shows that  $L(\lambda)$ can have different realizations via Gelfand--Tsetlin tableaux as a relation module.

\begin{Lem}\label{lem-key}
Suppose $\lambda$ satisfies conditions a) or e) of Corollary \ref{prop:relationbounded}. Let $i>1$ and set $w_i=s_{i-1} s_i s_{i-2} s_{i-1} \cdots  
s_2s_3s_1s_2 \in W$, then  $L_i=L_{w_i^{-1}b_{st}}(w_i^{-1}\lambda)$ is a $\Gamma$-relation  Gelfand-Tsetlin module.

\end{Lem}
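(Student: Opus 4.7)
The plan is to construct $L_i$ explicitly as $V_{\mathcal{C}_i}(T(v^{(i)}))$ for a suitable Gelfand--Tsetlin tableau $T(v^{(i)})$ and admissible set of relations $\mathcal{C}_i$, following the strategy of the proof of Theorem \ref{hw module}.

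First, I would analyze $w_i$ combinatorially: a short induction on $i$ shows that in one-line notation $w_i = [i, i+1, 1, 2, \dots, i-1, i+2, \dots, n+1]$, i.e., $w_i$ cyclically brings the pair $(i, i+1)$ to the first two positions while preserving the relative order of $\{1, \dots, i-1\}$ and fixing everything $\geq i+2$. Consequently the Borel $w_i^{-1}\mathfrak{b}_{st}$ has simple roots $\{w_i^{-1}\alpha_k\}_{k=1}^{n}$, and by Weyl-invariance of the pairing one has $\langle w_i^{-1}\lambda + w_i^{-1}\rho_\pi,\, (w_i^{-1}\alpha_k)^{\vee}\rangle = \langle \lambda + \rho_\pi,\, \alpha_k^{\vee}\rangle$, so the integrality data of $w_i^{-1}\lambda$ relative to the new simple coroots coincides with that of $\lambda$ relative to $\pi$.

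Second, I would produce an explicit tableau $T(v^{(i)})$ playing the role of the $w_i^{-1}\mathfrak{b}_{st}$-highest weight vector of $L_i$: its entries are chosen so that every raising operator relative to $w_i^{-1}\mathfrak{b}_{st}$ acts by zero via the formulas \eqref{Gelfand--Tsetlin formulas}. This imposes specific equalities among certain row entries, patterned on the combinatorics of $w_i$ (the equalities that, in the proof of Theorem \ref{hw module}, force $E_{k,k+1}$ to annihilate the highest weight vector are now ``shifted'' by the permutation $w_i$). Letting $\mathcal{C}_i$ denote the maximal set of relations satisfied by $T(v^{(i)})$, the crucial step is to check that $\mathcal{C}_i$ is admissible, i.e., that the graph $G(\mathcal{C}_i)$ satisfies the $\Diamond$-Condition at every adjoining pair. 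Conditions (a) or (e) on $\lambda$, translated through $w_i$, provide the integrality patterns needed to complete each adjoining pair into either the triangle configuration $\mathcal{C}_1$ or the two-nose configuration $\mathcal{C}_2$.

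Once admissibility is established, Theorem \ref{thm-irr} gives that $V_{\mathcal{C}_i}(T(v^{(i)}))$ is simple; since by construction it has the correct highest weight $w_i^{-1}\lambda$ with respect to $w_i^{-1}\mathfrak{b}_{st}$, it must be isomorphic to $L_i$, and we conclude that $L_i$ is a $\Gamma$-relation Gelfand--Tsetlin module. The main obstacle is the $\Diamond$-Condition verification: each adjoining pair $\{(k,r),(k,s)\}$ in $G(\mathcal{C}_i)$ requires a separate analysis depending on how $k,r,s$ sit relative to the indices $i,i+1$ and the shifted block $\{1,\dots,i-1\}$, and one must enumerate these cases in a manner parallel to (but more intricate than) the casework in the proof of Theorem \ref{hw module}.
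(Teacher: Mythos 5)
Your proposal follows essentially the same route as the paper's proof: there, too, one writes down an explicit tableau annihilated by the raising operators of $w_i^{-1}\mathfrak b_{st}$ (with entries built from the values $\left<\lambda+\rho_\pi,\alpha_k^{\vee}\right>$ permuted according to $w_i$), pairs it with an admissible set of relations $\mathcal{C}=\mathcal{C}^+\cup\mathcal{C}^-$, invokes Theorem \ref{thm-irr} for simplicity, and identifies the resulting module with $L_i$ by checking the vanishing of the appropriate raising operators and the weight data. The only difference is one of completeness rather than method: the paper exhibits the tableau entries and the relation set explicitly and asserts admissibility directly, while your outline defers exactly this construction and the adjoining-pair ($\Diamond$-Condition) casework, which is where the actual work of the proof lies.
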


\begin{proof} 

Let us consider $\{v_{1},\ldots,v_{n+1}\}\subseteq \mathbb{C}$ such  that  $ v_1-v_2=\left <\lambda + \rho_{\pi}, \alpha_i^{\vee} \right> $,  $v_1- v_{i+1}=-\left <\lambda + \rho_{\pi}, \alpha_{i-1}^{\vee} \right>$, $ v_2-v_{i+2}=\left <\lambda + \rho_{\pi}, \alpha_{i+1}^{\vee} \right> $, $ v_{k+2}-v_{k+3}=\left <\lambda + \rho_{\pi}, \alpha_k^{\vee} \right> $ for all $1\leq k \leq i-2$, $ v_k-v_{k+1}=\left <\lambda + \rho_{\pi}, \alpha_k^{\vee} \right> $ for all $k\geq i+2$, and $\displaystyle \sum_{i=1}^{n+1}v_i=-\binom{n+1}{2}$.

Let $T(v)$ be the Gelfand--Tsetlin tableau with entries
\begin{align*}
v_{rs}=\begin{cases}
v_1+i-1, & \ \ \text{if}\ \  r=s=1,\\
v_1+i+1-r, & \ \ \text{if}\ \  1<r\leq i  \text{ and } s=r-1,\\
v_1, & \ \ \text{if}\ \  r>i   \text{ and } s=i,\\
v_2+i+1-r, & \ \ \text{if}\ \  1<r\leq i  \text{ and } s=r,\\
v_2, & \ \ \text{if}\ \  r>i  \text{ and } s=r,\\
v_3, & \ \ \text{if}\ \  r\geq 3  \text{ and } s=1,\\
v_{s+2}, & \ \ \text{if}\ \  r\geq 4  \text{ and } 2\leq s <i,\\
v_s, & \ \ \text{if}\ \ r>s\geq i+1.
\end{cases}
\end{align*}

Consider the set of relations $\mathcal{C}= \mathcal{C}^+ \cup \mathcal{C}^-$, where 
  $$\mathcal{C}^+ := \{((r+1,s);(r,s))\ |\ 1\leq s< r\leq n \text{ or } r=s=1\} \bigcup \{((r+1,r+1);(r,r))\ |\ i+1\leq  r\leq n\} $$
      $$\mathcal{C}^- :=\{ ((r,r);(r+1,r+1))\ |\ 1 < r\leq i \} \bigcup \{((r,s);(r+1,s+1))\ |\ 1\leq s< r\leq n\} .$$

In this case, $\mathcal{C}$ is an admissible set of relations, $T(V)$ is a $\mathcal{C}$-realization, and $V_{\mathcal{C}}(T(v))$ is a simple module by Theorem \ref{thm-irr}. Moreover,  $V_{\mathcal{C}}(T(v))=U(\mathfrak{g})T(v)$, $E_{12}(T(v))=E_{i+1,1}(T(v))=E_{2,i+2}(T(v))=E_{k,k+1}(T(v))=0$ for all $k\notin \{i-1,i,i+1\}$, $H_1 ( T(v))=\left<\lambda,\alpha_i^{\vee} \right> T(v)$, $ \displaystyle \sum_{k=1}^i (-H_k)  (T(v))=\left<\lambda,\alpha_{i-1}^{\vee} \right> T(v)$, $\displaystyle \sum_{k=2}^{i+1} H_k(T(v))=\left<\lambda,\alpha_{i+1}^{\vee} \right> T(v)$,  $H_{j+2}(T(v))=\left<\lambda,\alpha_j^{\vee} \right> T(v)$ for all $1\leq j \leq i-2$  and $H_j (T(v))=\left<\lambda,\alpha_j^{\vee} \right> T(v)$ for all $j \geq i+2$. Hence, $V_{\mathcal{C}}(T(v)) \simeq L_i$.

\end{proof}

\

\section{Induced relation modules}\label{Section: Induced relation modules}
In this section we fix $f=E_{21}$ and $\Gamma=\Gamma_{st}$.

\subsection{Localization of relation modules}

\begin{Lem}\label{inj-surj-E21}   Suppose that $T(v)$ is a $\mathcal{C}$-realization for some admissible  set of relations $\mathcal{C}$.
\begin{enumerate}[a)]
\item  $f$ is injective on  $V_{\mathcal{C}}(T(v))$ if and only if $((1,1);(2,1)) \notin \mathcal{C}$ and $((1,1);(2,2)) \notin \mathcal{C}$. 
\item $f$ is surjective on  $V_{\mathcal{C}}(T(v))$ if and only if $((2,1);(1,1)) \notin \mathcal{C}$ and $((2,2);(1,1)) \notin \mathcal{C}$.
\end{enumerate}

\end{Lem}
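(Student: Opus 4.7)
The plan rests on a direct computation with the Gelfand--Tsetlin formulas. For $k=1$, both the numerator product (indexed over the nonexistent row $0$) and the denominator product (indexed over $j \neq 1$ with $1 \leq j \leq 1$) in the formula for $E_{k+1,k}(T(w))$ are empty, so the action of $f = E_{21}$ on any tableau collapses to
\[
f \cdot T(w) \;=\; T(w - \delta^{11}),
\]
with the convention that this is zero in $V_{\mathcal{C}}(T(v))$ whenever $T(w - \delta^{11})$ fails to satisfy $\mathcal{C}$. Hence $f$ is injective (respectively surjective) on $V_{\mathcal{C}}(T(v))$ if and only if, for every basis tableau $T(w) \in \mathcal{B}_{\mathcal{C}}(T(v))$, decreasing (respectively increasing) the $(1,1)$-entry by one yields a tableau that still satisfies $\mathcal{C}$.

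Next I would enumerate the elements of $\mathcal{R}$ that involve the vertex $(1,1)$. Direct inspection of the definitions shows there are exactly three: $((2,2);(1,1)) \in \mathcal{R}^+$, imposing $w_{22} \geq w_{11}$, and $((1,1);(2,1)), ((1,1);(2,2)) \in \mathcal{R}^-$, imposing the strict inequalities $w_{11} > w_{21}$ and $w_{11} > w_{22}$. The restriction $2 \leq j$ in the definition of $\mathcal{R}^+$ excludes $((2,1);(1,1))$ from $\mathcal{R}$ altogether, so the condition $((2,1);(1,1)) \notin \mathcal{C}$ in part~(b) is vacuous. A sign check now yields the ``if'' directions of both parts at once: decreasing $w_{11}$ by one strengthens $((2,2);(1,1))$ but can destroy an active $\mathcal{R}^-$ relation (when $w_{11} - w_{2s} = 1$), while increasing $w_{11}$ by one strengthens both $\mathcal{R}^-$ relations but can destroy $((2,2);(1,1))$ (when $w_{22} - w_{11} = 0$).

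For the converses I would construct explicit witnesses. For~(a), assume $((1,1);(2,s)) \in \mathcal{C}$ for some $s \in \{1,2\}$ and iterate $f$ on $T(v)$: because only the $(1,1)$-entry changes, relations not involving $(1,1)$ are unaffected, and the three possible $(1,1)$-relations reduce to elementary linear inequalities in the iteration count $k$. Choosing $k_0$ to be the largest $k$ preserving all active constraints, $f^{k_0} \cdot T(v)$ is a nonzero basis element while $f^{k_0+1} \cdot T(v) = 0$, producing a nontrivial kernel. For~(b), assume $((2,2);(1,1)) \in \mathcal{C}$, set $d := v_{22} - v_{11} \geq 0$, and consider the shift $z$ with $z_{11} = d$ and all other entries zero; the resulting tableau $T(v+z)$ still satisfies $\mathcal{C}$ and has $(v+z)_{22} = (v+z)_{11}$, so its only potential preimage $T(v+z+\delta^{11})$ violates $((2,2);(1,1))$, placing $T(v+z)$ outside the image of $f$. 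The one non-mechanical point, and the main obstacle, is verifying that the witness tableaux really lie in $\mathcal{B}_{\mathcal{C}}(T(v))$; this reduces to the observation that $((2,2);(1,1))$ and $((1,1);(2,2))$ cannot simultaneously belong to $\mathcal{C}$ (they impose contradictory inequalities on $w_{11}$ and $w_{22}$), so at most two of the three $(1,1)$-relations are active at once and the remaining compatibility checks become straightforward.
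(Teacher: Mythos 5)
Your core computation is the same as the paper's: for $k=1$ the Gelfand--Tsetlin formula degenerates to $f\cdot T(w)=T(w-\delta^{11})$, both implications are settled by checking which relations involving the vertex $(1,1)$ can be broken when the $(1,1)$-entry is shifted by $\pm 1$, and the converses are proved by exhibiting a basis tableau killed by $f$ (for injectivity) or lying outside the image of $f$ (for surjectivity). All of that is fine.

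There is, however, a genuine gap in part (b). You declare the condition $((2,1);(1,1))\notin\mathcal{C}$ vacuous because the constraint ``$2\leq j$'' in the displayed definition of $\mathcal{R}^+$ would exclude $((2,1);(1,1))$ from $\mathcal{R}$. That constraint is a typo, not a feature: the paper systematically uses sets of relations containing arrows $((i+1,1);(i,1))$, and in particular $((2,1);(1,1))$ --- see the proof of Theorem \ref{hw module} (where $\mathcal{C}\supset\{((i+1,j);(i,j))\ \vert\ 1\leq j\leq i\leq n\}$), Lemma \ref{localiz-E21} (where $\mathcal{D}=\mathcal{C}\setminus\{((2,1);(1,1)),((2,2);(1,1))\}$), Corollary \ref{cor-twisted-isom} and Theorem \ref{sl2 Induced} (where $((2,1);(1,1))\in\mathcal{C}$ is precisely the situation of interest), and the paper's own proof of this lemma, which explicitly treats the case $((2,1);(1,1))\in\mathcal{C}$. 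Under the intended definition your proof of the ``only if'' direction of (b) omits exactly that case, and it is the case that matters in the sequel: for highest weight relation modules every basis tableau has $w_{21}-w_{11}\in\mathbb{Z}_{\geq 0}$, and the lemma is invoked to conclude that $f$ is injective but not surjective there. The repair is the same witness construction you already use for $((2,2);(1,1))$: since $((1,1);(2,1))$ cannot lie in $\mathcal{C}$ simultaneously with $((2,1);(1,1))$, you may raise the $(1,1)$-entry of $T(v)$ until it equals $w_{21}$ (or $\min(w_{21},w_{22})$ if $((2,2);(1,1))\in\mathcal{C}$ as well); the resulting tableau lies in $\mathcal{B}_{\mathcal{C}}(T(v))$, while its only possible preimage $T(w+\delta^{11})$ does not, so it is not in the image of $f$. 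Correspondingly, in the ``if'' direction of (a) you should also record that decreasing $w_{11}$ preserves $((2,1);(1,1))$ when it is present (there are four, not three, relations in $\mathcal{R}$ involving $(1,1)$). With these adjustments your argument coincides with the paper's proof.
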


\begin{proof} 
For any $T(w)\in \mathcal B_{\mathcal{C}}(T(v))$ we have  $f(T(w))=T(w-\delta^{11})\in \mathcal B_{\mathcal{C}}(T(v))$, since $((1,1);(2,1))\notin \mathcal{C}$ and $((1,1);(2,2)) \notin \mathcal{C}$.  Suppose that $0\not= u=\displaystyle \sum_{i\in I} a_i T(w^i) \in V_{\mathcal{C}}(T(v))$. Then $$\displaystyle f(u)=f\bigg( \sum_{i\in I} a_i T(w^i) \bigg)=\sum_{i \in I} a_i T(w^i-\delta^{11})  \not=0,$$ as $a_i\not=0$, $T(w^i-\delta^{11})\in V_{\mathcal{C}}(T(v))$ and $T(w^i-\delta^{11})\not = T(w^j-\delta^{11})$ for all $i\not=j \in I$. 
 On the other hand, if $((1,1);(2,1))\in \mathcal{C}$ or $((1,1);(2,2)) \in \mathcal{C}$ then there exists $T(w)\in \mathcal B_{\mathcal{C}}(T(v))$ such that $w_{11}-w_{21}=1$ or $w_{11}-w_{22}=1$, and hence $f(T(w))=T(w-\delta^{11})=0$, as $T(w-\delta^{11})\notin \mathcal B_{\mathcal{C}}(T(v))$. This shows item (a). 

If $((2,1);(1,1)) \notin \mathcal{C}$ and $((2,2);(1,1)) \notin \mathcal{C}$ then for any $T(w)\in \mathcal B_{\mathcal{C}}(T(v))$, we have  $T(w+\delta^{11})\in \mathcal B_{\mathcal{C}}(T(v))$ and $f(T(w+\delta^{11}))=T(w)$. Hence, for any $u\in V_{\mathcal{C}}(T(v))$,  $$u=\sum_{i\in I} a_i T(w^i)=\sum_{i\in I}  a_i f(T(w^i+\delta^{11}))=f\left(\sum_{i\in I}  a_i T(w^i+\delta^{11})\right).$$
On the other hand, assume that $((2,1);(1,1)) \in \mathcal{C}$ or $((2,2);(1,1)) \in \mathcal{C}$ and $f$ is surjective on $V_{\mathcal{C}}(T(v))$. Then there exists $T(w)\in  \mathcal B_{\mathcal{C}}(T(v))$ with  $w_{11}=w_{21}$ or $w_{11}=w_{22}$ and $u\in V_{\mathcal{C}}(T(v))$, such that $$f(u)=\sum_{i\in I} a_i T(w^i-\delta^{11})=T(w).$$
Hence, $T(w)=T(w^i-\delta^{11})$ for some $i\in I$. As $T(w^i)\in \mathcal B_{\mathcal{C}}(T(v))$, we have  $0=w_{21}-w_{11}=w^i_{21}-w^i_{11}+1\in \mathbb{Z}_{>0}$ or $0=w_{22}-w_{11}=w^i_{22}-w^i_{11}+1\in \mathbb{Z}_{>0}$. This implies (b).
\end{proof}

Next we consider the twisted localization of  $V_{\mathcal{C}}(T(v))$ with respect to $f$ and its tableaux realization.

\begin{Lem}\label{localiz-E21}  Let $\mathcal{C} $ be an admissible set of relations such that $$\mathcal{C} \cap \{((1,1);(2,1)),  ((1,1);(2,2)) \}= \emptyset$$ and  $T(v)$ a $\mathcal{C}$-realization. If
 $\mathcal{D} =  \mathcal{C} \setminus \{((2,1);(1,1)),((2,2);(1,1))\}$ then the localized module  $D_f(V_{\mathcal{C}}(T(v)))$  is isomorphic to $V_{\mathcal{D}}(T(v))$.
\end{Lem}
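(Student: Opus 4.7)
The plan is to realize $D_f V_{\mathcal{C}}(T(v))$ as $V_{\mathcal{D}}(T(v))$ via the natural inclusion of basis sets $\mathcal{B}_{\mathcal{C}}(T(v))\subseteq \mathcal{B}_{\mathcal{D}}(T(v))$, promoted to an isomorphism by the universal property of localization once $f$ is made bijective. The essential preliminaries are the admissibility of $\mathcal{D}$ and the verification that the inclusion is one of $\mathfrak{g}$-modules.

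First I would verify admissibility of $\mathcal{D}$. The structural conditions (i)--(iii) on the graph transfer immediately to subgraphs, since removing edges cannot create paths nor connect previously disconnected vertices. Only the $\Diamond$-condition needs genuine attention. Row $1$ carries no adjoining pair; for an adjoining pair $\{(2,i),(2,j)\}$ in row $2$, the $\mathcal{C}_1$-form of the diamond would require both $((2,i);(1,1))$ and $((1,1);(2,j))$ to lie in $\mathcal{C}$, but the hypothesis rules out the latter. Hence every such diamond in $\mathcal{C}$ is already of type $\mathcal{C}_2$, involving only rows $2$ and $3$, and is unaffected on passage to $\mathcal{D}$. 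In higher rows the excised arrows never participate in a diamond, so the condition is preserved there as well.

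Next I would argue that the inclusion $\mathcal{B}_{\mathcal{C}}(T(v))\subseteq \mathcal{B}_{\mathcal{D}}(T(v))$ extends to a $\mathfrak{g}$-module embedding $V_{\mathcal{C}}(T(v)) \hookrightarrow V_{\mathcal{D}}(T(v))$. Both modules are defined by the same Gelfand--Tsetlin formulas \eqref{Gelfand--Tsetlin formulas}; a term produced from $T(w)\in \mathcal{B}_{\mathcal{C}}$ that lies in $\mathcal{B}_{\mathcal{D}}\setminus\mathcal{B}_{\mathcal{C}}$ can only violate one of $((2,1);(1,1))$ or $((2,2);(1,1))$, forcing $w_{21}=w_{11}$ or $w_{22}=w_{11}$, which makes the corresponding coefficient in the action formula vanish (directly visible e.g.\ in the factor $(w_{11}-w_{21})(w_{11}-w_{22})$ appearing in $E_{12}$, and in analogous factors for the other generators). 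By Lemma \ref{inj-surj-E21}, $f$ acts bijectively on $V_{\mathcal{D}}(T(v))$: injectively because neither $((1,1);(2,1))$ nor $((1,1);(2,2))$ lies in $\mathcal{D}$, and surjectively by construction. The universal property of localization then supplies a $D_f U(\mathfrak{g})$-module map $\Phi \colon D_f V_{\mathcal{C}}(T(v)) \to V_{\mathcal{D}}(T(v))$ extending the embedding.

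Finally I would check that $\Phi$ is an isomorphism. For surjectivity, given $T(u)\in \mathcal{B}_{\mathcal{D}}(T(v))$ choose $k\in \mathbb{Z}_{\geq 0}$ with $u_{11}-k\leq \min(u_{21},u_{22})$; since the only $\mathcal{C}$-relations involving $(1,1)$ are the two excised ones, $T(u-k\delta^{11})\in \mathcal{B}_{\mathcal{C}}(T(v))$, and $\Phi(f^{-k}T(u-k\delta^{11}))=T(u)$. For injectivity, if $\Phi(f^{-k}x)=0$ then $f^{-k}x=0$ in $V_{\mathcal{D}}(T(v))$, whence $x=0$ in $V_{\mathcal{D}}$ by bijectivity of $f$, and hence $x=0$ in $V_{\mathcal{C}}$ by injectivity of the embedding. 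The main obstacle I anticipate is the first step: the transfer of the $\Diamond$-condition rests on the observation that the absence of $((1,1);(2,j))$ from $\mathcal{C}$ already blocks type-$\mathcal{C}_1$ diamonds at row $2$, so removing row-$2$-to-row-$1$ arrows affects no diamond; all subsequent steps are essentially formal consequences of this and of Lemma \ref{inj-surj-E21}.
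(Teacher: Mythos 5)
Your proof is correct, but it reaches the conclusion by a genuinely different route from the paper's. You construct the isomorphism directly: you first check that $\mathcal{D}$ stays admissible (the key observation being that under the hypothesis $(1,1)$ has no outgoing arrows, so no path passes through it, a row-$2$ adjoining pair can never use a $\mathcal{C}_1$-type diamond, and the excised arrows enter no diamond), then realize $V_{\mathcal{C}}(T(v))$ as a submodule of $V_{\mathcal{D}}(T(v))$ via the vanishing of the Gelfand--Tsetlin coefficients at the borderline tableaux (the factor $(w_{11}-w_{21})(w_{11}-w_{22})$ for $E_{12}$ and $(w_{2i}-w_{11})/(w_{2i}-w_{2j})$ for $E_{32}$), and finally use the universal property of $D_f$ together with an explicit surjectivity/injectivity check, exploiting that $f$ is bijective on $V_{\mathcal{D}}(T(v))$ by Lemma \ref{inj-surj-E21}. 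The paper argues indirectly: by Lemma \ref{inj-surj-E21}, $f$ is injective but not surjective on $M=V_{\mathcal{C}}(T(v))$ and bijective on $N=V_{\mathcal{D}}(T(v))$; the quotient $N/M$ is identified with the simple relation module $V_{\mathcal{C}_1}(T(v+\delta^{11}))$, $\mathcal{C}_1=\mathcal{D}\cup\{((1,1);(2,1))\}$, via Theorem \ref{thm-irr}, so $M$ is a maximal submodule of $N$, and Proposition \ref{loc-max} then forces $D_f M\simeq N$. What each approach buys: the paper's argument is shorter once Proposition \ref{loc-max} and the simplicity criterion are in hand, but it hinges on the simplicity of $N/M$ and tacitly assumes at least one removed relation actually lies in $\mathcal{C}$ (when $\mathcal{C}=\mathcal{D}$ the statement is trivial, but ``injective and not bijective'' no longer applies); your argument bypasses the simplicity of $N/M$ entirely, works uniformly in all cases, and exhibits the tableau basis of $D_f M$ explicitly, at the cost of verifying admissibility of $\mathcal{D}$ and the coefficient-vanishing step by hand --- which you do correctly, the only cosmetic slips being the expression $u_{11}-k\le\min(u_{21},u_{22})$ (meaningful only for those entries integrally related to $u_{11}$, i.e.\ exactly those for which the corresponding relation can belong to $\mathcal{C}$) and the unstated but easy remark that deleting arrows into the sink $(1,1)$ creates no new adjoining pairs.
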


\begin{proof} Set $M=V_{\mathcal{C}}(T(v))$ and $N=V_{\mathcal{D}}(T(v))$. 
By Lemma \ref{inj-surj-E21}, the action of $f$  on $M$ is injective but not  bijective. Hence $M$ is a proper submodule of $D_f M$. 
On the other hand, consider the set of relations $\mathcal{C}_1=\mathcal{D}\cup \{((1,1);(2,1)) \} $. By Theorem \ref{thm-irr}, $V_{\mathcal{C}_1}(T(v+\delta^{11}))$ is a simple module isomorphic to $N/M$ and hence, $M$ is a maximal submodule of $N$.  Finally, since $f$ is bijective on $N$, by Proposition \ref{loc-max} we have $N\simeq D_f M$. 
\end{proof}

Under the assumptions of Lemma \ref{localiz-E21} the following is straightforward.

\begin{Lem}\label{lem-formulas} For any tableau $T(w)\in \mathcal{B}_{\mathcal{C}}(T(v))$ and any $x\in \mathbb{C}$ denote by $T(w)^x$ the image of $T(w)$ as an element of  the twisted module $D_f^{x}(V_{\mathcal{C}}(T(v)))$. Then we have
 \\

   \begin{tabular}{rl}
       $E_{12}\cdot T(w)^x$  =& $-(w_{11}+x-w_{21})(w_{11}+x-w_{22}) T(w+\delta^{11})^x$.
    \end{tabular}  

   \begin{tabular}{rl}
   $E_{k,k+1}\cdot T(w)^x$ = &  $\displaystyle -\sum_{i=1}^{k}\left(\dfrac{\prod_{j=1}^{k+1}(w_{ki}-w_{k+1,j})}{\prod_{j\neq i}^{k}(w_{ki}-w_{kj})}\right)T(w+\delta^{ki})^x$, for all $k=2, \dots, n$.
   \end{tabular}
  
  \begin{tabular}{rl}
  $E_{21}\cdot T(w)^x$ = & $T(w-\delta^{11})^x$.
   \end{tabular}

\begin{tabular}{rl}
       $E_{32}\cdot T(w)^x=$  &  $\displaystyle \frac{  w_{21}-(w_{11}+x)}{w_{21}-w_{22}}T(w-\delta^{21})^x + \frac{ w_{22}-(w_{11}+x)}{w_{22}-w_{21}}T(w-\delta^{22})^x$.\end{tabular} 
   
   \begin{tabular}{rl}
   $E_{k+1,k}\cdot T(w)^x$= & $ \displaystyle \sum_{i=1}^{k}\left(\frac{\prod_{j=1}^{k-1}(w_{ki}-w_{k-1,j})}{\prod_{j\neq i}^{k}(w_{ki}-w_{kj})}\right)T(w-\delta^{ki})^x$, for all $k=3, \dots, n$.
    \end{tabular}
    
     \begin{tabular}{rl}
 $H_{1}\cdot T(w)^x$ = &  $(2(w_{11}+x) -(w_{21}+w_{22})-1) T(w)^x$.
  \end{tabular}

  \begin{tabular}{rl}
  $H_{2}\cdot T(w)^x$ = & $\left(2(w_{21}+w_{22})-(w_{31}+w_{32}+w_{33})-(w_{11}+x)-1\right)T(w)^x$.
    \end{tabular}
    
      \begin{tabular}{rl}
  $H_{k} \cdot T(w)^x$ = &  $\left(2\sum\limits_{i=1}^{k}w_{ki}-\sum\limits_{i=1}^{k+1}w_{k+1,i}-\sum\limits_{i=1}^{k-1}w_{k-1,i}-1\right)T(w)^x$, for all $k=3, \dots, n$.\\ 
  &
  \end{tabular}  

\end{Lem}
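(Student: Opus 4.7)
The plan is to verify each formula in the lemma by unpacking the definition of the twisted action: for any generator $u$ of $\mathfrak{sl}_{n+1}$, $u\cdot T(w)^x = (\Theta_f^x(u)\cdot T(w))^x$, where
\[
\Theta_f^x(u) = \sum_{i\geq 0}\binom{x}{i}\operatorname{ad}(f)^i(u)\,f^{-i}
\]
is a finite sum since each $\operatorname{ad}(f)^i$ kills $u$ for $i$ sufficiently large. Note that by Lemma \ref{localiz-E21} we have $D_f V_{\mathcal{C}}(T(v))\cong V_{\mathcal{D}}(T(v))$, where $f$ acts bijectively; in particular $f^{-1}\cdot T(w)=T(w+\delta^{11})$, which will be used throughout.

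The first observation is that $f=E_{21}$ commutes with all of $E_{k,k+1}$ for $k\geq 2$, $E_{k+1,k}$ for $k\geq 3$, and $H_k$ for $k\geq 3$, as well as with itself. For such $u$ one has $\Theta_f^x(u)=u$, and thus the twisted action on $T(w)^x$ is obtained verbatim from \eqref{Gelfand--Tsetlin formulas} applied to $T(w)$. This accounts for all the formulas in the lemma except for the ones involving $H_1$, $H_2$, $E_{12}$, and $E_{32}$.

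For the four remaining generators I would compute the relevant adjoint cascades directly:
\begin{itemize}
\item $\operatorname{ad}(f)(H_1)=2f$, giving $\Theta_f^x(H_1)=H_1+2x$;
\item $\operatorname{ad}(f)(H_2)=-f$, giving $\Theta_f^x(H_2)=H_2-x$;
\item $\operatorname{ad}(f)(E_{12})=-H_1$, $\operatorname{ad}(f)^2(E_{12})=-2f$, giving $\Theta_f^x(E_{12})=E_{12}-xH_1f^{-1}-x(x-1)f^{-1}$;
\item $\operatorname{ad}(f)(E_{32})=-E_{31}$, giving $\Theta_f^x(E_{32})=E_{32}-xE_{31}f^{-1}$.
\end{itemize}
The $H_1$ and $H_2$ formulas then follow immediately by adding the scalar shifts $2x$ and $-x$ to the usual eigenvalues given in \eqref{Gelfand--Tsetlin formulas}. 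For $E_{12}$, I would apply $\Theta_f^x(E_{12})$ to $T(w)$, convert $f^{-1}\cdot T(w)$ to $T(w+\delta^{11})$, read off the scalar $H_1$-eigenvalue on $T(w+\delta^{11})$, and verify the purely algebraic identity
\[
-(w_{11}-w_{21})(w_{11}-w_{22})-x(2w_{11}+1-w_{21}-w_{22})-x(x-1)=-(w_{11}+x-w_{21})(w_{11}+x-w_{22})
\]
by expanding the right-hand side.

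The main obstacle is the $E_{32}$ formula, since $E_{31}$ does not appear among the standard generators used in \eqref{Gelfand--Tsetlin formulas} and its action on tableaux must be derived. I would use the relation $E_{31}=[E_{32},f]$ to write
\[
E_{31}\cdot T(w+\delta^{11}) = E_{32}\cdot\bigl(f\cdot T(w+\delta^{11})\bigr) - f\cdot\bigl(E_{32}\cdot T(w+\delta^{11})\bigr),
\]
compute each term by \eqref{Gelfand--Tsetlin formulas}, and observe the crucial cancellation $(w_{2i}-w_{11})-(w_{2i}-w_{11}-1)=1$ in the numerators, producing
\[
E_{31}\cdot T(w+\delta^{11}) = \tfrac{1}{w_{21}-w_{22}}T(w-\delta^{21}) + \tfrac{1}{w_{22}-w_{21}}T(w-\delta^{22}).
\]
Substituting this into $\Theta_f^x(E_{32})\cdot T(w)=E_{32}\cdot T(w)-xE_{31}\cdot T(w+\delta^{11})$ and collecting terms over the common denominators $w_{21}-w_{22}$ and $w_{22}-w_{21}$ gives precisely the claimed formula with $w_{11}$ shifted to $w_{11}+x$. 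Once this core computation is handled, all remaining formulas are immediate.
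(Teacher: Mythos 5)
Your proposal is correct and is exactly the ``straightforward'' verification the paper has in mind (it offers no written proof): unpacking $\Theta_f^{x}$ on each generator, noting that $f=E_{21}$ commutes with $E_{k,k+1}$ ($k\geq 2$), $E_{k+1,k}$ ($k\geq 3$), $H_k$ ($k\geq 3$), and handling $H_1,H_2,E_{12},E_{32}$ via the finite adjoint cascades. All your commutators and identities check out, including $\Theta_f^x(E_{12})=E_{12}-xH_1f^{-1}-x(x-1)f^{-1}$ and the derivation of $E_{31}=[E_{32},f]$ acting on tableaux (where the cancellation is legitimate since, under the hypotheses of Lemma \ref{localiz-E21}, membership of a tableau in $\mathcal{B}_{\mathcal{D}}$ does not depend on the first-row entry).
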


\begin{Th}\label{twistedE21} Let $M$ be a $\Gamma$-relation module with an injective action of $f$. Then  $D_f^{x} M$ is a $\Gamma$-relation Gelfand--Tsetlin module.
\end{Th}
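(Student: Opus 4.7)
Since $M$ is a $\Gamma$-relation module, we may write $M = V_{\mathcal{C}}(T(v))$ for some admissible set of relations $\mathcal{C}$ and some $\mathcal{C}$-realization $T(v)$. The injectivity of $f=E_{21}$ on $M$ together with Lemma~\ref{inj-surj-E21}(a) forces
$\mathcal{C}\cap\{((1,1);(2,1)),((1,1);(2,2))\}=\emptyset$,
so Lemma~\ref{localiz-E21} identifies the ordinary localization as
\[
D_f M \;\simeq\; V_{\mathcal{D}}(T(v)), \qquad \mathcal{D} := \mathcal{C}\setminus\{((2,1);(1,1)),((2,2);(1,1))\}.
\]
By construction $\mathcal{D}$ is admissible and contains no relation incident to $(1,1)$; equivalently, $(1,1)$ is an isolated vertex of the graph $G(\mathcal{D})$.

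The guiding observation is that twisting by $x$ corresponds combinatorially to shifting the corner entry $v_{11}$ by $x$. Let $T(v+x\delta^{11})$ denote the tableau obtained from $T(v)$ by replacing $v_{11}$ with $v_{11}+x$ and leaving every other entry unchanged. Since $(1,1)$ is isolated in $G(\mathcal{D})$, the tableau $T(v+x\delta^{11})$ is itself a $\mathcal{D}$-realization, and the assignment $T(v+z)\mapsto T(v+x\delta^{11}+z)$ gives a bijection
\[
\mathcal{B}_{\mathcal{D}}(T(v))\;\longrightarrow\;\mathcal{B}_{\mathcal{D}}(T(v+x\delta^{11})),
\]
because the defining conditions on $z\in\mathbb{Z}_0^{(n+1)(n+2)/2}$ involve only rows of indices $\geq 2$, which are identical in the two tableaux. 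Extending linearly produces a vector space isomorphism $\sigma\colon D_f^{x}M\to V_{\mathcal{D}}(T(v+x\delta^{11}))$, $T(w)^x\mapsto T(w+x\delta^{11})$.

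To finish, one checks that $\sigma$ intertwines the $\mathfrak{g}$-actions. This is a term-by-term comparison between the twisted formulas of Lemma~\ref{lem-formulas} and the standard Gelfand--Tsetlin formulas \eqref{Gelfand--Tsetlin formulas} applied to $T(w'):=T(w+x\delta^{11})$: in each formula of Lemma~\ref{lem-formulas}, every occurrence of $w_{11}+x$ appears precisely where \eqref{Gelfand--Tsetlin formulas} (on $V_{\mathcal{D}}(T(v+x\delta^{11}))$) calls for $w'_{11}$, and no other coordinate is affected by the shift. Once this compatibility is recorded, $D_f^x M \simeq V_{\mathcal{D}}(T(v+x\delta^{11}))$ exhibits $D_f^x M$ as a $\Gamma$-relation Gelfand--Tsetlin module.

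The step I expect to require the most care is not the formula matching but the verification that $T(v+x\delta^{11})$ is a $\mathcal{D}$-realization for every $x\in\mathbb{C}$, and that the bases are in natural bijection for arbitrary (possibly non-generic) $x$. This comes down to the observation that the passage from $\mathcal{C}$ to $\mathcal{D}$ combined with Lemma~\ref{inj-surj-E21}(a) leaves $(1,1)$ combinatorially decoupled from the rest of the diagram, so that an arbitrary complex shift of its entry can neither break the $\Diamond$-Condition nor alter the set of admissible integer shifts $z$.
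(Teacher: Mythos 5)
Your argument is correct and follows essentially the same route as the paper: reduce via Lemma~\ref{localiz-E21} to the untwisted localization $V_{\mathcal{D}}(T(v))$, send $T(w)^x\mapsto T(w+x\delta^{11})$, and match the twisted formulas of Lemma~\ref{lem-formulas} against the Gelfand--Tsetlin formulas \eqref{Gelfand--Tsetlin formulas} on $V_{\mathcal{D}}(T(v+x\delta^{11}))$. Your additional remarks on why $(1,1)$ is combinatorially decoupled, so that $T(v+x\delta^{11})$ is a $\mathcal{D}$-realization and the bases correspond for arbitrary $x$, simply make explicit a point the paper leaves implicit.
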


\begin{proof} By hypothesis, $M \simeq V_{\mathcal{C}}(T(v))$ for some 
$\mathcal{C}$-realization $T(v)$. By  Lemma \ref{localiz-E21}, 
the set $\{ T(v+\ell\delta^{11})\ |\ \ell\in\mathbb{Z}\}$ is a basis of the localized module $D_f M$.   
Define the linear isomorphism $\phi$ from $D^x_f(V_{\mathcal{C}}(T(v)))$ to $V_{\mathcal{D}}(T(v+x\delta^{11}))$, such that    $\phi(T(w)):=T(w+x\delta^{11})= T(w)^x\in \mathcal{B}_{\mathcal{D}}(T(v+x\delta^{11}))$ for any $T(w)\in \mathcal{B}_{\mathcal{C}}(T(v))$. 
 Comparing the twisted action of $\mathfrak{g}$ from Lemma  \ref{lem-formulas} with the Gelfand--Tsetlin formulas, we have
$$\phi(g\cdot T(w)^x)=g\phi(T(w)^x)$$ for any $g\in  \mathfrak{g}$.
Hence, $\phi$ is an isomorphism of modules, which completes the proof. 

\end{proof}

\begin{Co}\label{cor-twisted-isom}
Let $M$ be a simple $\Gamma$-relation Gelfand--Tsetlin module. 
\begin{enumerate}[a)]
    \item If $f$ is bijective on $M$, then $M$ is isomorphic to   $D_f^{x}(N)$ for some simple $\Gamma$-relation Gelfand--Tsetlin module $N$ with an injective action of  $f$  and $x\in \mathbb{C}\setminus \mathbb{Z}$.
    \item If $f$ is surjective on $M$ but not injective, then $M$ is isomorphic to   $D_f(N)/N$ for some simple $\Gamma$-relation Gelfand--Tsetlin module $N$ with an injective action of  $f$.
\end{enumerate}
\end{Co}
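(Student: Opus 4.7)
The plan in both parts is to invert the construction of Theorem~\ref{twistedE21} and Lemma~\ref{localiz-E21}. By Theorem~\ref{thm-irr}, I would write $M\simeq V_{\mathcal{C}}(T(v))$ with $\mathcal{C}$ the maximal set of relations satisfied by $T(v)$. The criterion of Lemma~\ref{inj-surj-E21} translates the hypotheses on $f$ acting on $M$ into concrete conditions on which of the four relations between $(1,1)$ and $\{(2,1),(2,2)\}$ belong to $\mathcal{C}$. In both parts I would produce the candidate $N$ by shifting only the entry at position $(1,1)$ of $T(v)$, taking $N:=V_{\mathcal{C}'}(T(v'))$ where $T(v')=T(v-x\delta^{11})$ for an appropriate $x$ and $\mathcal{C}'$ is the maximal set of relations satisfied by $T(v')$; simplicity of $N$ is then immediate from Theorem~\ref{thm-irr}.

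For part~(a), bijectivity of $f$ on $M$ forces all four of those relations to be absent, so neither $v_{11}-v_{21}$ nor $v_{11}-v_{22}$ is an integer. I would pick $x\in\mathbb{C}\setminus\mathbb{Z}$ equal to either $v_{11}-v_{21}$ or $v_{11}-v_{22}$ according to a short case analysis on $v_{21}-v_{22}$, arranging that $v'_{11}:=v_{11}-x$ lies at or below both $v_{21}$ and $v_{22}$ in the integer sense. Then $\mathcal{C}'$ is obtained from $\mathcal{C}$ by adjoining one or both of the relations $((2,j);(1,1))$, and still contains neither $((1,1);(2,1))$ nor $((1,1);(2,2))$, so by Lemma~\ref{inj-surj-E21} the action of $f$ on $N$ is injective. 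Applying Theorem~\ref{twistedE21} together with the tableau identification from its proof, I obtain $D_f^x N\simeq V_{\mathcal{D}}(T(v))$ with $\mathcal{D}=\mathcal{C}'\setminus\{((2,1);(1,1)),((2,2);(1,1))\}=\mathcal{C}$, yielding $D_f^x N\simeq M$.

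For part~(b), surjectivity combined with non-injectivity says that $v_{11}$ is strictly above at least one of $v_{21},v_{22}$ by a positive integer and above neither by a non-positive one. I would take $m\in\mathbb{Z}_{>0}$ so that $v':=v-m\delta^{11}$ places $v'_{11}$ at or below both $v_{21}$ and $v_{22}$ in the integer sense. Once again $f$ is injective but not surjective on $N$, and Lemma~\ref{localiz-E21} gives $D_f N\simeq V_{\mathcal{D}}(T(v'))$ with $\mathcal{D}=\mathcal{C}\setminus\{((1,1);(2,1)),((1,1);(2,2))\}$. A direct comparison of bases then identifies $D_f N/N$ with $M$: its basis is indexed by tableaux in $\mathcal{B}_{\mathcal{D}}(T(v'))\setminus\mathcal{B}_{\mathcal{C}'}(T(v'))$, which after translating the $(1,1)$-entry by $m$ matches $\mathcal{B}_{\mathcal{C}}(T(v))$, and the $\mathfrak{g}$-actions coincide because the Gelfand--Tsetlin formulas depend only on the numerical entries of the tableau.

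The main obstacle in both parts is verifying that the modified set $\mathcal{C}'$ is admissible, so that $V_{\mathcal{C}'}(T(v'))$ is a well-defined $\mathfrak{sl}_{n+1}$-module. Concretely, one must check that the graph $G(\mathcal{C}')$ still satisfies the $\Diamond$-Condition after adjoining edges $(2,j)\to(1,1)$ to $G(\mathcal{C})$ and possibly removing the reverse edges; the only new adjoining pairs lie on row~$2$ of the tableau, and the required diamonds either are inherited from the admissibility of $\mathcal{C}$ or are automatic because row~$1$ contains only the single vertex $(1,1)$. Once this local combinatorial check is in place, the rest of the argument is bookkeeping on top of Lemma~\ref{localiz-E21} and Theorem~\ref{twistedE21}.
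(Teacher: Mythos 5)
Your overall strategy is exactly the paper's: write $M\simeq V_{\mathcal C}(T(v))$ with $\mathcal C$ maximal (Theorem \ref{thm-irr}), translate the hypotheses on $f$ through Lemma \ref{inj-surj-E21}, build $N$ by shifting only the $(1,1)$-entry, and identify $D_f^{x}N$, respectively $D_f N/N$, with $M$ via Lemma \ref{localiz-E21} and the tableau identification inside the proof of Theorem \ref{twistedE21}. Part (a) is correct and essentially coincides with the paper's argument; your case analysis on $v_{21}-v_{22}$ (choosing $x=v_{11}-v_{21}$ or $x=v_{11}-v_{22}$ so that $v'_{11}$ ends up weakly below both second-row entries) does explicitly what the paper compresses into a ``without loss of generality'', and your direction $v'=v-x\delta^{11}$ is the one compatible with $D_f^{x}V_{\mathcal C'}(T(v'))\simeq V_{\mathcal C'\setminus\{((2,1);(1,1)),((2,2);(1,1))\}}(T(v'+x\delta^{11}))$. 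Your flag about admissibility of $\mathcal C'$ is legitimate (the paper asserts it without comment); the verification is even simpler than you indicate: since neither $((1,1);(2,1))$ nor $((1,1);(2,2))$ lies in $\mathcal C'$, the vertex $(1,1)$ is a sink of $G(\mathcal C')$, so no new paths and hence no new adjoining pairs appear, and the diamonds for the old adjoining pairs are untouched.

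Part (b), however, has a genuine gap. You push $v_{11}$ down until $v'_{11}$ is weakly below both $v_{21}$ and $v_{22}$ ``in the integer sense'' and then claim that $\mathcal B_{\mathcal D}(T(v'))\setminus\mathcal B_{\mathcal C'}(T(v'))$ matches $\mathcal B_{\mathcal C}(T(v))$. This is true only when exactly one of $v_{11}-v_{21}$, $v_{11}-v_{22}$ is an integer. Your hypotheses do not obviously exclude the case where both are integers: surjectivity only forbids the relations $((2,j);(1,1))$, and noncriticality of row $2$ can be enforced by a chain through row $3$ rather than through $(1,1)$, so a maximal admissible $\mathcal C$ may contain both $((1,1);(2,1))$ and $((1,1);(2,2))$. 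In that configuration $\mathcal C'$ acquires both relations $((2,1);(1,1))$ and $((2,2);(1,1))$, and the complement of $\mathcal B_{\mathcal C'}(T(v'))$ inside $\mathcal B_{\mathcal D}(T(v'))$ consists of all tableaux with $w_{11}>\min(w_{21},w_{22})$, which strictly contains the set $\mathcal B_{\mathcal C}(T(v))$ of tableaux with $w_{11}$ above both second-row entries; the tableaux with $w_{11}$ strictly between $w_{21}$ and $w_{22}$ survive in $D_f N/N$, so that quotient is not simple and cannot be $M$. The paper sidesteps this by explicitly reducing to the case $v_{11}-v_{21}\in\mathbb Z_{>0}$, $v_{11}-v_{22}\notin\mathbb Z$ and shifting $v_{11}$ exactly onto $v_{21}$, so that only the single pair $((1,1);(2,1))$ versus $((2,1);(1,1))$ is exchanged. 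To complete your argument you must either justify that the doubly integral configuration cannot occur (which is what the paper's ``we can assume'' tacitly asserts) or treat it by a separate construction; as written, your basis-matching step fails precisely there.
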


\begin{proof}
As $M$ is a simple relation module, then $M \simeq V_{\mathcal{C}}(T(v))$ for some Gelfand--Tsetlin tableau and  a maximal admissible set of relations $\mathcal{C}$ satisfied by $T(v)$.

Part (a): Suppose that $f$ is bijective on $M$. By Lemma \ref{inj-surj-E21}, we have  $ v_{11}-v_{21}\notin \mathbb{Z}$ and $ v_{11}-v_{22}\notin \mathbb{Z}$. Without loss of generality we assume that $ v_{21}-v_{22}\notin \mathbb{Z}$.
 In this case, let $x=v_{11}-v_{21}$ and consider the Gelfand--Tsetlin tableau $T(v')=T(v+x\delta^{11})$. Then  $\mathcal{C'}=\mathcal{C} \cup \{ ((2,1);(1,1))  \}$ is the maximal admissible set of relations satisfied by $T(v')$. Hence, by Theorem \ref{thm-irr}, $N=V_{\mathcal{C'}}(T(v'))$ is a simple $\Gamma$-relation Gelfand--Tsetlin module. By Lemma \ref{inj-surj-E21}, $f$ is injective on $N$ but not surjective.  Then $D_f^{x} N\simeq M$ by Theorem \ref{twistedE21}.

Part (b): Suppose that $f$ is surjective on $M$, but not injective. By Lemma \ref{inj-surj-E21}, we can assume that $ v_{11}-v_{21}\in \mathbb{Z}_{>0}$ and $ v_{11}-v_{22}\notin \mathbb{Z}$. Set again $x=v_{11}-v_{21}$ and  $T(v')=T(v+x\delta^{11})$. Then  $\mathcal{C'}=\bigg(\mathcal{C}\setminus{\{ ((1,1);(2,1))  \}} \bigg) \cup \{ ((2,1);(1,1))  \}$ is the maximal admissible set of  relations satisfied by $T(v')$. Hence, $N=V_{\mathcal{C'}}(T(v'))$ is a simple relation Gelfand--Tsetlin module by Theorem \ref{thm-irr}.  From Lemma \ref{inj-surj-E21} we have that $f$ is injective on $N$, and that the localized module $D_f(N)$  is isomorphic to $V_{\mathcal{C}\setminus{\{ ((1,1);(2,1))  \}}}(T(v'))$ (cf. Lemma \ref{localiz-E21}). Moreover, $N$ is a maximal submodule of  $D_f(N)$, and hence $D_f(N)/N\simeq V_{\mathcal{C}}(T(v'))$ is a simple module. We conclude that $D_f(N)/N\simeq V_{\mathcal{C}}(T(v))$.

\end{proof}

\subsection{ \texorpdfstring{$\mathfrak{sl}_2$}{sl2}-induced relation modules}
Let ${\mathfrak p}\subset \mathfrak{g}$ be a parabolic subalgebra of $\mathfrak{g}$ with the Levi factor isomorphic to $\mathfrak a_{\alpha}=\mathfrak{sl}_2+\mathfrak{h}$ based on the root $\alpha=\alpha_1\in \pi$.
Let $V =V(\gamma,\mu)$ be a simple cuspidal weight $\mathfrak a_{\alpha}$-module, where $\mu\in \mathfrak{h}^*$  is such that $V_{\mu}\neq 0$, and $\gamma\in \mathbb C$ is the eigenvalue of the Casimir element 
$c_{\alpha} = (E_{11}-E_{22}+1)^2+4E_{21}E_{12}$
 of 
$\mathfrak a_{\alpha}$. Denote by $L(\gamma,\mu)=L_\mathfrak{p}^{\mathfrak{g}}(\{ \alpha \}, V)$ the unique simple quotient of the induced module $\mathop {\rm Ind}_{{\mathfrak p}}^{\mathfrak{g}}(\{ \alpha \},V)$ with the trivial action of the radical of ${\mathfrak p}$ on $V$. 
Let   $\Gamma_{\alpha}$ be any Gelfand--Tsetlin subalgebra corresponding to the flag containing $\mathfrak a_{\alpha}$ and $\mathfrak h$.  

We have

\begin{Th}\label{sl2 Induced} Let $n>1$.
The module $L(\gamma,\mu)$ is a $\Gamma_{\alpha}$-relation Gelfand--Tsetlin $\mathfrak{g}$-module if and only if $L(\lambda)$ is a $\Gamma_{\alpha}$-relation highest weight $\mathfrak{g}$-module, where $\left<\mu-\lambda,\alpha_1^{\vee} \right>=2x$, $\left<\mu-\lambda,\alpha_2^{\vee} \right>=-x$, $\left<\mu-\lambda,\alpha_i^{\vee} \right>=0$ for each $i=3,\dots, n$, $\left<\lambda+\rho_{\pi},\alpha_1^{\vee} \right> \notin \mathbb{Z}_{\geq 0}$ and
 $(2x-\mu_1-1)^2=\gamma$. In this case $L(\gamma,\mu) \simeq D_f^{x}(L(\lambda))$ and $x$ satisfies the condition $x-\left<\mu+\rho_{\pi},\alpha_1^{\vee} \right> \notin\mathbb{Z}$. Moreover,  
\begin{enumerate}[a)]
    \item $L(\gamma,\mu)$ is bounded if and only if $L(\lambda)$ is bounded;
\item   If $\lambda$ is dominant then $s_1 \cdot \lambda$ is dominant and $\gamma\neq m^2$ for all $m \in \mathbb{Z}$. In this case, $Ann_{U(\mathfrak{g})} L(\gamma,\mu)=Ann_{U(\mathfrak{g})} L(\lambda)=Ann_{U(\mathfrak{g})} L(s_1 \cdot \lambda);$
\item If $\gamma=m^2$ for some $m \in \mathbb{Z}\setminus\{0\}$ then $L(\lambda)$ is bounded and  the weight $w \cdot \lambda$ is integral, for all $w \in W$.  In this case, $ Ann_{U(\mathfrak{g})} L( \lambda) \subset Ann_{U(\mathfrak{g})} L(\gamma,\mu) \subset Ann_{U(\mathfrak{g})} L(w \cdot \lambda)$, if $w \cdot \lambda$ is dominant.
\end{enumerate}
\end{Th}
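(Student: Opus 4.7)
The plan is to realize $L(\gamma,\mu)$ as the twisted localization $D_f^{x}L(\lambda)$ at $f=E_{21}$ and then leverage Theorem~\ref{twistedE21}. For the forward direction, suppose $L(\lambda)$ is a $\Gamma_{\alpha}$-relation highest weight module with $\left<\lambda+\rho_\pi,\alpha_1^\vee\right>\notin\mathbb{Z}_{\geq 0}$. By a standard highest weight computation, this condition is precisely what makes $f=E_{21}$ act injectively on $L(\lambda)$. Theorem~\ref{twistedE21} then yields that $D_f^{x}L(\lambda)$ is a $\Gamma_{\alpha}$-relation Gelfand--Tsetlin module for every $x\in\mathbb{C}$. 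Letting $T(v)$ be the highest weight tableau of $L(\lambda)$ (with column-constant entries above the diagonal, so $v_{11}=v_{21}$), the twisted action from Lemma~\ref{lem-formulas} produces a weight shift of $2x$ on $H_1$, $-x$ on $H_2$, and $0$ on $H_k$ for $k\geq 3$, reproducing the stated relations between $\mu$ and $\lambda$. A short calculation gives $c_\alpha T(v)^x = (v_{21}-v_{22})^2 T(v)^x = \left<\lambda+\rho_\pi,\alpha_1^\vee\right>^2 T(v)^x$, and the identity $\left<\lambda+\rho_\pi,\alpha_1^\vee\right>=\mu_1+1-2x$ then forces $\gamma=(2x-\mu_1-1)^2$.

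To identify $D_f^{x}L(\lambda)$ with $L(\gamma,\mu)$, I would verify from the (standard) Gelfand--Tsetlin formulas that $E_{k,k+1}T(v)^x=0$ for $k\geq 2$, so that $T(v)^x$ generates a subquotient that, over the parabolic subalgebra $\mathfrak{p}$, coincides with the parabolically induced module from the cuspidal $\mathfrak{a}_\alpha$-module of central character $\gamma$ and weight $\mu$. The hypothesis $x-\left<\mu+\rho_\pi,\alpha_1^\vee\right>\notin\mathbb{Z}$ is equivalent, after substituting $\mu_1=\lambda_1+2x$, to $x+\left<\lambda+\rho_\pi,\alpha_1^\vee\right>\notin\mathbb{Z}$, which by Lemma~\ref{lem-formulas} guarantees that $E_{12}$ is bijective on $D_f^{x}L(\lambda)$; together with the bijectivity of $E_{21}$ coming from the localization, this ensures a torsion-free $\mathfrak{a}_\alpha$-structure. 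Simplicity of $D_f^{x}L(\lambda)$ as a relation module, furnished by Theorem~\ref{thm-irr}, then delivers the isomorphism with $L(\gamma,\mu)$.

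For the converse, $L(\gamma,\mu)$ is cuspidal along $\mathfrak{a}_\alpha$, so $E_{21}$ acts bijectively on it. Corollary~\ref{cor-twisted-isom}(a) therefore writes $L(\gamma,\mu)\simeq D_f^{x}N$ for some simple $\Gamma_{\alpha}$-relation Gelfand--Tsetlin module $N$ with $E_{21}$ injective and $x\in\mathbb{C}\setminus\mathbb{Z}$; inspection of the maximal relation set for $N$ via Theorem~\ref{hw module} forces $N\simeq L(\lambda)$ for a highest weight $\lambda$ of the required form, and matching the Casimir eigenvalue $\gamma=(2x-\mu_1-1)^2$ against $\left<\lambda+\rho_\pi,\alpha_1^\vee\right>^2$ fixes the remaining parameters.

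For the additional items, weight multiplicities transfer across $D_f^{x}$ (each weight space sits in a $\mathbb{Z}\alpha_1$-coset isomorphically to the corresponding one of $L(\lambda)$), which gives (a). For (b), dominance of $\lambda$ together with $\left<\lambda+\rho_\pi,\alpha_1^\vee\right>\notin\mathbb{Z}_{\geq 0}$ forces $\left<\lambda+\rho_\pi,\alpha_1^\vee\right>\notin\mathbb{Z}$, so $s_1\cdot\lambda$ remains dominant and $\gamma$ is not a perfect integer square; the annihilator equality follows from Corollary~\ref{Localization annihilator} combined with Corollary~\ref{dom-ann}. For (c), $\gamma=m^2$ with $m\neq 0$ forces $\left<\lambda+\rho_\pi,\alpha_1^\vee\right>=\pm m\in\mathbb{Z}\setminus\{0\}$; integrality on $\alpha_1$ together with the constraints of Theorem~\ref{hw module} implies $\lambda$ is integral, hence $L(\lambda)$ is bounded by Corollary~\ref{prop:relationbounded}, and the annihilator chain comes from Proposition~\ref{loc-ann} and Proposition~\ref{max-ann}. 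The step I expect to cost the most work is the converse (third paragraph): rigorously tracing back from the abstract existence of $N$ in Corollary~\ref{cor-twisted-isom}(a) to the conclusion that $N$ is a \emph{highest weight} module requires using the maximality of the relation graph to locate the top vertex forced by $E_{21}$-injectivity, and then matching the resulting Cartan weight with the parameters of $L(\lambda)$.
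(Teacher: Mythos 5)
Your construction of $D_f^{x}L(\lambda)$ and its identification with $L(\gamma,\mu)$ (weight shift on $H_1,H_2$, Casimir computation, the map from the generalized Verma module, simplicity via Theorem \ref{thm-irr} under $x+\left<\lambda+\rho_\pi,\alpha_1^\vee\right>\notin\mathbb{Z}$) is essentially the paper's argument for that implication, and your handling of items a)--c) agrees with the paper's use of Proposition \ref{loc-ann}, Proposition \ref{max-ann}, Corollary \ref{dom-ann} and Corollary \ref{Localization annihilator}. The genuine gap is in the converse, i.e.\ the implication ``$L(\gamma,\mu)$ is a $\Gamma_\alpha$-relation module $\Rightarrow$ $L(\lambda)$ is a $\Gamma_\alpha$-relation highest weight module with the stated parameter matching,'' which is where the real content of the equivalence sits. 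First, the bijectivity of $E_{21}$ on $L(\gamma,\mu)$ is asserted from ``cuspidality along $\mathfrak{a}_\alpha$'': cuspidality of $V$ only gives bijectivity on the top level; to get it on all of $L(\gamma,\mu)$ you need the Fernando dichotomy (injectivity of both $E_{12}$ and $E_{21}$ on the simple module) and then Lemma \ref{inj-surj-E21} applied to the realization, using injectivity of $E_{12}$ to exclude the relations $((2,1);(1,1))$ and $((2,2);(1,1))$ from the maximal set. This is fixable but not supplied.

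Second, and more seriously, the decisive step ``inspection of the maximal relation set for $N$ via Theorem \ref{hw module} forces $N\simeq L(\lambda)$'' is not an argument. Corollary \ref{cor-twisted-isom}(a) only hands you an abstract simple relation module $N$, obtained from an \emph{unknown} tableau realization $V_{\mathcal{C}}(T(v))$ of $L(\gamma,\mu)$ by moving the $(1,1)$ entry; Theorem \ref{hw module} runs in the wrong direction (it says which highest weight modules are relation modules, not that a relation module with an injective, non-surjective $E_{21}$ is a highest weight module). To conclude $N\simeq L(\lambda)$, and to extract the stated relations between $\mu$, $\lambda$, $x$ and $\gamma$, you must first pin down the shape of $T(v)$: rows $2,\dots,n+1$ column-constant and the $(1,1)$ entry shifted by $x$. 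That is exactly the point the paper settles by exploiting the embedded $\mathfrak{u}^{+}_{\Sigma}$-invariant copy of the cuspidal module $V$ inside $L(\gamma,\mu)$, whose annihilation by $E_{k,k+1}$, $k\geq 2$, together with the Gelfand--Tsetlin formulas, weight data and Casimir eigenvalue, forces the explicit form of the tableau; only then does replacing the first-row entry by $v_{21}$ produce a highest weight tableau and identify $\lambda$. Without this step your converse (and hence the ``only if'' half of the theorem) is not established. Two smaller points: the condition $\left<\lambda+\rho_\pi,\alpha_1^\vee\right>\notin\mathbb{Z}_{\geq 0}$ is not ``precisely'' the injectivity condition for $E_{21}$ (at $\left<\lambda+\rho_\pi,\alpha_1^\vee\right>=0$ the element $E_{21}$ is still injective, yet $L(\gamma,\mu)$ fails to be a relation module, as the paper remarks), and, like the paper, your forward direction is written only for the column-constant realization, so the case $\gamma=m^2$, where Theorem \ref{hw module}(b) gives a different tableau, still needs at least a comment.
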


\begin{proof}
Since $V$ is a simple dense $\mathfrak{sl}_2$-module, then $\gamma \neq (\mu_1-2k+1)^2$ for all $k\in \mathbb{Z}$, and hence $x \notin \mathbb{Z}$. On the other hand,$\gamma=m^2$ for some  $m \in \mathbb{Z}$ if and only if  $\left<\lambda+\rho_{\pi},\alpha_1^{\vee} \right> \in \mathbb{Z}$.  Consider the following two cases:  

\textbf{Case 1:} Let $\gamma \neq m^2$ for all $m\in \mathbb{Z}$. 
Assume that $L(\gamma,\mu)$ is a $\Gamma_{\alpha}$-relation Gelfand--Tsetlin $\mathfrak{g}$-module. Then  
$L(\gamma,\mu) \simeq V_{\mathcal{C}}(T(v))$ for some set of relations $\mathcal{C}$ and 
 a tableau  $T(v)=T(v_{ij})$, such that 
 \begin{align*}
v_{ij}=\begin{cases}
v_1+x, & \ \ \text{if}\ \  i=1\\
v_1, & \ \ \text{if}\ \  i>j=1 \\
v_j, & \ \ \text{if}\ \  j\geq 2,
\end{cases}
\end{align*}
with $v_1-v_2= \left<\mu+\rho_{\pi},\alpha_1^{\vee} \right>-2x $, $v_2-v_3= \left<\mu+\rho_{\pi},\alpha_2^{\vee} \right> +x $,  $v_j-v_{j+1}= \left<\mu+\rho_{\pi},\alpha_j^{\vee} \right> $ for each $3\leq j\leq n$ and $\displaystyle \sum_{j=1}^{n+1}v_j=-n-1$. On the other hand, $f$ is bijective on $V_{\mathcal{C}}(T(v))$ if and only if  $x-\left<\mu,\alpha_1^{\vee} \right> \notin\mathbb{Z}$, in which case  $$ \mathcal{C} \cap \{((1,1);(2,1)),   ((1,1);(2,2)), ((2,1);(1,1)), ((2,2);(1,1)) \}= \emptyset.$$ 
 Consider the  tableau $T(v')=T(v'_{ij})$ with  entries $v'_{11}=v_{21}$ and $v'_{ij}=v_{ij}$ for $i\neq 1$. Then $\mathcal{D}=\mathcal{C} \cup \{ ((2,1);(1,1))  \}$ is the maximal set of  relations satisfied by $T(v')$, since $\left<\lambda+\rho_{\pi},\alpha_1^{\vee} \right> \notin \mathbb{Z}$. Hence the module  $V_{\mathcal{D}}(T(v'))$ is simple by Theorem \ref{thm-irr}.  Then $V_{\mathcal{D}}(T(v'))\simeq U(\mathfrak{g})T(v')$,  $E_{k,k+1}(T(v'))=0$  and  $H_k( T(v))=\lambda_k T(v)$  for all $k=1, \dots, n$, $\lambda_k = \left<\lambda,\alpha_k^{\vee} \right>$. We get  $V_{\mathcal{D}}(T(v')) \simeq L(\lambda)$. Moreover,   $ c_{\alpha} (T(v'))=(\lambda_1+1)^2 T(v')=\gamma T(v')$. 
Applying Theorem \ref{twistedE21}, we conclude that $D_f^{x} L(\lambda)\simeq L(\gamma,\mu)$.

Conversely, let $L(\lambda)$ be  a $\Gamma_{\alpha}$-relation Gelfand--Tsetlin module. Then $L(\lambda) \simeq V_{\mathcal{C}}(T(v))$, where  $T(v)=T(v_{ij})$ is the Gelfand--Tsetlin tableau such that $v_{ij}=v_j$  with  $v_j-v_{j+1}= \left<\lambda+\rho_{\pi},\alpha_j^{\vee} \right> $ for each $1\leq j\leq n$, and $\displaystyle \sum_{j=1}^{n+1}v_j=-n-1$. Note that $\mathcal{C}$ is the maximal set of relations satisfied by $T(v)$. Without loss of generality we assume that $((2,1);(1,1))\in \mathcal{C}$ and $((1,1);(2,1))\notin \mathcal{C}$. Hence, the localized module $D_f^{x}(L(\lambda))$  is  isomorphic   to  $V_{\mathcal{D}}(T(v+x\delta^{11}))$, where $\mathcal{D}=\mathcal{C} \setminus{ \{ ((2,1);(1,1)) \}}$ by Theorem \ref{twistedE21}.
Given that   $x+v_{11}-v_{22} \notin \mathbb{Z}$, then $V_{\mathcal{D}}(T(v+x\delta^{11}))$ is a simple module. Further, the $\mathfrak{sl}_2$-module   $\mbox{span}_\mathbb{C} \{ T(v+(x +\ell)\delta^{11})\ | \ \ell \in \mathbb{Z} \}$ is isomorphic to $V$. By 
 \eqref{Gelfand--Tsetlin formulas}, we have  $E_{k,k+1}(T(v +(x+\ell)\delta^{11}))=0$ for all $k=2,\dots, n$ and $\ell \in \mathbb{Z}$. Hence, we get 
 an epimorphism of $U(\mathfrak{g})$-modules
$$\phi: M_\mathfrak{p}^{\mathfrak{g}}(\{ \alpha \}, V) \rightarrow  V_{\mathcal{D}}(T(v+x \delta^{11})),$$
 such that $\phi(u\otimes T(v))=u T(v)$ for all $u\in U(\mathfrak{g})$. Therefore,   $L(\gamma,\mu) \simeq V_{\mathcal{D}}(T(v+x \delta^{11}))$.
  
 \textbf{Case 2:} Let $\gamma=m^2$ for some $m \in \mathbb{Z}$. 
In this case $\left<\lambda+\rho_{\pi},\alpha_1^{\vee} \right> \in \mathbb{Z}_{< 0} $ and the construction is similar. We leave the details out.

Note that if  $\left<\lambda+\rho_{\pi},\alpha_1^{\vee} \right> =0$ then  $L(\gamma,\mu)$ is not a relation module. 
 The statement  a) is clear from the construction, while b)  and c) follow from  Proposition \ref{max-ann}, Corollary \ref{dom-ann} and Corollary \ref{Localization annihilator}.
 \end{proof} 

\

\subsection{Family of induced relation modules}

In this section we give an explicit construction  of a family of parabolically induced bounded $\Gamma$-relation modules. 

Fix complex  $\{u_{i}\}_{i=1,\ldots,n+1}$ and $\{v_{i}\}_{i=1,\ldots,n}$  satisfying conditions: 
 \begin{enumerate}[a)]
  \item  $u_{i}-v_1\notin\mathbb{Z}$ for any $1\leq i \leq n$.
 \item $v_{j}-v_{j+1}\in \mathbb{Z}_{>0}$ for any $1\leq j <n.$ 
\end{enumerate}

Let $T(v)$ be a Gelfand--Tsetlin tableau with entries
\begin{align}\label{tableufixed}
v_{ij}=\begin{cases}
u_i, & \ \ \text{if}\ \  j=1\\
v_{j-1}, & \ \ \text{if}\ \  j\neq 1
\end{cases}
\end{align}
for $1\leq j \leq i \leq n+1.$

Consider the set of relations $\mathcal{Q}= \mathcal{Q}^+ \cup \mathcal{Q}^-$, where 
\begin{center}
\begin{tabular}{rl}
        $\mathcal{Q}^+ :=$  & $\{((i+1,j);(i,j))\ |\ 2\leq j\leq i\leq n\}$ \\
      $\mathcal{Q}^- :=$  & $\{((i,j);(i+1,j+1))\ |\ 2\leq j\leq i\leq n\} $. \\
\end{tabular} 
\end{center}

 \begin{Lem}\label{Th: cuspidalBonded}
Let $T(v)$ be a Gelfand--Tsetlin tableau \eqref{tableufixed}. Then  $V_{\mathcal{Q}}(T(v))$  is a bounded dense $\mathfrak{g}$-module. Moreover, $V_{\mathcal{Q}}(T(v))$ is simple if and only if  $u_{i}-u_{i+1}\notin\mathbb{Z}$ for all $1\leq i \leq n$.
\end{Lem}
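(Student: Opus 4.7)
My plan has three stages: first verify $\mathcal{Q}$ is admissible so $V_{\mathcal{Q}}(T(v))$ is a well-defined $\mathfrak g$-module; then extract density and boundedness from a single weight computation combined with an inequality analysis of the ``higher'' columns; and finally invoke Theorem~\ref{thm-irr} for the simplicity criterion. The technically involved step is the admissibility check; the rest is short bookkeeping.

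To prove admissibility, first note that $T(v)$ satisfies $\mathcal{Q}$: for $((i+1,j);(i,j)) \in \mathcal{Q}^+$ both entries equal $v_{j-1}$, and for $((i,j);(i+1,j+1)) \in \mathcal{Q}^-$ we get the strict positive integer $v_{j-1} - v_j$ by hypothesis (b). In the graph $G(\mathcal{Q})$ the first column is a set of $n+1$ isolated vertices, while for $j \geq 2$ the arrows form a zig-zag lattice with no ``crossing'' edges between consecutive rows, so conditions (i)--(iii) reduce to straightforward adjacency checks. The adjoining pairs in row $k$ for $3 \leq k \leq n$ are exactly $\{(k,i),(k,i+1)\}$ with $2 \leq i \leq k-1$; for each such pair both the ``top'' triangle $(k,i) \to (k+1,i+1) \to (k,i+1)$ and the ``bottom'' triangle $(k,i) \to (k-1,i) \to (k,i+1)$ lie in $\mathcal{Q}$, giving a $\mathcal{C}_1$ configuration with $p=i+1$, $q=i$, so the $\Diamond$-condition holds. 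The remaining rows ($k \in \{1,2\}$, and the last row $n+1$ to which the $\Diamond$-condition does not apply) contribute no adjoining pairs.

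Extracting the $H_k$-eigenvalue from \eqref{Gelfand--Tsetlin formulas} and setting $\sigma_k := \sum_{i=1}^{k} z_{k,i}$, the weight shift of $T(v+z)$ from $T(v)$ at $H_k$ is $2\sigma_k - \sigma_{k-1} - \sigma_{k+1}$; inverting through the Cartan matrix of type $A_n$ gives
$$\mu(T(v+z)) - \mu(T(v)) = \sum_{k=1}^n \sigma_k\, \alpha_k,$$
a root-lattice shift depending only on the row sums. Because $\mathcal{Q}$ puts no constraint on the first column, the integers $z_{1,1},\ldots,z_{n,1}$ vary freely and each independently shifts one $\sigma_k$, so the support is the single $Q$-coset $\mu(T(v))+Q$, i.e.\ $V_{\mathcal{Q}}(T(v))$ is dense. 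The inequalities from $\mathcal{Q}^+$ force $z_{j,j} \leq \cdots \leq z_{n,j} \leq 0$ in each column $j \geq 2$, and those from $\mathcal{Q}^-$ impose $z_{i+1,j+1} \leq z_{i,j} + (v_{j-1}-v_j) - 1$; a descending induction on $j$ then bounds every $z_{k,i}$ with $i \geq 2$ in a finite interval whose size depends only on the differences $v_j-v_{j+1}$, producing a finite total number $N$ of higher-column configurations. For any fixed weight $\mu = \mu(T(v)) + \sum_k m_k \alpha_k$, the conditions $\sigma_k = m_k$ together with a choice of higher-column configuration uniquely determine $z_{k,1} = m_k - \sum_{i \geq 2} z_{k,i}$; hence every weight space has dimension exactly $N$, and the module is bounded.

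For simplicity I would apply Theorem~\ref{thm-irr}: $V_{\mathcal{Q}}(T(v))$ is simple iff $\mathcal{Q}$ is the maximal admissible set of relations satisfied by $T(v)$. Hypothesis (a) combined with (b) gives $u_i - v_{j-1} = (u_i - v_1) + (v_1 - v_{j-1}) \notin \mathbb{Z}$ for every $i$ and every $j \geq 2$, ruling out any relation connecting a first-column vertex to a vertex in another column. The only possible enlargements are thus $((i,1);(i+1,1)) \in \mathcal{R}^-$ or $((i+1,1);(i,1)) \in \mathcal{R}^+$, each satisfied by $T(v)$ precisely when $u_i - u_{i+1} \in \mathbb{Z}$; adding such an arrow between two previously isolated column-$1$ vertices preserves admissibility, since column $1$ stays disconnected from every other column and no new adjoining pair is created. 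Therefore $\mathcal{Q}$ is maximal exactly when $u_i - u_{i+1} \notin \mathbb{Z}$ for all $1 \leq i \leq n$, completing the proof.
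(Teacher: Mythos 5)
Your route is genuinely different from the paper's: the paper disposes of this lemma in one line by citing \cite{Maz03} (Lemmas 3.1--3.3), whereas you rebuild everything inside the relation-module formalism of \cite{FRZ19} --- admissibility of $\mathcal{Q}$ through the $\Diamond$-condition, density and boundedness through the observation that the first column absorbs the weight while the entries in columns $j\geq 2$ are confined to finite intervals (upper bounds from the chains up each column to the frozen top row, lower bounds by descending induction along the diagonal relations), so that every weight multiplicity equals the finite number $N$ of higher-column configurations, and simplicity through Theorem \ref{thm-irr}. Those first three parts are correct and buy a self-contained proof with an exact multiplicity formula that the citation does not give; the only small glosses are that you should also record that $T(v)$ is actually a $\mathcal{Q}$-realization (the integrality pattern forced by (a) and (b)) and that $\mathcal{Q}$ is noncritical, both of which are hypotheses of the criteria you invoke.

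The simplicity step, however, has a concrete gap at the vertex $(n+1,1)$. You assert that hypothesis (a) yields $u_i-v_{j-1}\notin\mathbb{Z}$ ``for every $i$'', but (a) as stated only covers $1\leq i\leq n$, so the entry $u_{n+1}$ is not controlled. If $u_{n+1}-v_1\in\mathbb{Z}$, then $T(v)$ satisfies genuinely new, non-redundant relations attached to $(n+1,1)$ (for instance $((n,2);(n+1,1))$ when $v_1-u_{n+1}\in\mathbb{Z}_{>0}$), so your list of possible enlargements of $\mathcal{Q}$ is incomplete; worse, simplicity can actually fail even though $u_i-u_{i+1}\notin\mathbb{Z}$ for all $1\leq i\leq n$. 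For $\mathfrak{sl}_3$ take $u_3=v_1-5$ and $v_1-v_2\geq 6$ with $u_1,u_2$ generic: the factor $w_{22}-w_{31}=z_{22}+5$ in the Gelfand--Tsetlin coefficient of $E_{23}$ vanishes at $z_{22}=-5$, and the span of the tableaux with $z_{22}\leq -5$ is a proper nonzero submodule of $V_{\mathcal{Q}}(T(v))$. So your argument (and the criterion itself) needs $u_{n+1}-v_1\notin\mathbb{Z}$ in addition; this holds in every use the paper makes of the lemma, where $u_{n+1}$ equals some $u_m$ with $m\leq n$, so it is best treated as the intended extension of (a) to $i=n+1$, but it must be stated and invoked exactly where you exclude relations between column $1$ and the other columns. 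A smaller point of the same nature: $T(v)$ always satisfies further relations inside the frozen top row and composite relations implied by $\mathcal{Q}$ (e.g.\ $((n+1,2);(n+1,3))$), so the maximality in Theorem \ref{thm-irr} has to be read, as the paper itself does elsewhere, up to relations implied by $\mathcal{Q}$ or supported on the fixed row $n+1$; with that reading and with $u_{n+1}-v_1\notin\mathbb{Z}$, your case analysis (only the arrows $((i+1,1);(i,1))$ or $((i,1);(i+1,1))$ can be added, each satisfied exactly when $u_i-u_{i+1}\in\mathbb{Z}$, and adding them keeps the set admissible) does give the stated criterion.
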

\begin{proof} Follows from  \cite[Lemmas 3.1, 3.2, 3.3]{Maz03}.
\end{proof}

If $\mathcal{C}$ is any admissible set of relations containing $\mathcal{Q}$ and $T(v)$ is a $\mathcal{C}$-realization, then   $V_{\mathcal{C}}(T(v))$ is a submodule of $V_{\mathcal{Q}}(T(v))$ and hence a bounded module. In particular, $V_{\mathcal{C}}(T(v))$ has finite length \cite{Fer90}.

For $m\in \{2 \dots, n\}$  consider the tableau $T^m(v)$ as in \eqref{tableufixed} satisfying the conditions
\begin{equation*}
\label{Rem: tableaux for slm induzed}
 u_i=u_m \,\, \text{for} \,\, i=m+1,\dots, n+1, \,\, \text{and} \,\,  u_i-u_{i+1}\notin\mathbb{Z} \,\, \text{for} \,\,i=1,2,\dots, m-1. 
 \end{equation*}
 
 Let $\mathcal{C}^m =\mathcal{Q} \cup \{((i+1,1);(i,1))\ |\ m\leq i\leq n\}$. Then  $T^m(v)$ is a $\mathcal{C}^m$-realization and the relation module $V_{\mathcal{C}^m}(T^m(v))$ is simple. 

Consider the subset $\Sigma_m=\{\alpha_1, \ldots, \alpha_{m-1}\}\subset \pi$  of simple roots  and the 
corresponding parabolic subalgebra
$\mathfrak p_m=\mathfrak p_{\Sigma_m}\subset \mathfrak g$. Then  $\mathfrak p_m$ has the Levi subalgebra  isomorphic to $\mathfrak{sl}_m+\mathfrak{h}$.
We have

\

\begin{Th}\label{Lem: Induced by sl(m)} For $m\in \{2, \dots, n\}$,  the  module $V_{\mathcal{C}^m}(T^m(v))$ is isomorphic to $L_{\mathfrak p_m}^{\mathfrak{g}}(\Sigma_m, V)$  for some simple  cuspidal $\mathfrak{sl}_m$-module $V$. 
\end{Th}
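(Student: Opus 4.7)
The plan is to realize $V_{\mathcal{C}^m}(T^m(v))$ as the simple quotient of a parabolically induced module from a simple cuspidal $\mathfrak{sl}_m$-module $V$ that sits inside as a $\mathfrak{p}_m$-submodule annihilated by the nilradical $\mathfrak{u}^+_{\Sigma_m}$.

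First I would introduce $V \subseteq V_{\mathcal{C}^m}(T^m(v))$ as the linear span of those basis tableaux $T^m(v+z)$ with $z_{ij}=0$ for all $i\geq m$. By inspection of the Gelfand--Tsetlin formulas, the generators $E_{k,k+1}, E_{k+1,k}, H_k$ with $k<m$ of the Levi $\mathfrak{l}_{\Sigma_m}$ only alter entries of row $k$, while the remaining Cartan elements act diagonally, so $V$ is $\mathfrak{l}_{\Sigma_m}$-stable. Truncating the tableaux in $V$ to their top $m$ rows identifies $V$, as an $\mathfrak{sl}_m$-module, with a height-$m$ relation module built from $u_1,\dots,u_m,v_1,\dots,v_{m-1}$ exactly as in Lemma \ref{Th: cuspidalBonded}; under the hypothesis $u_i-u_{i+1}\notin\mathbb{Z}$ for $i=1,\dots,m-1$, that lemma delivers a simple bounded dense, hence cuspidal (by Proposition \ref{cuspidal}), $\mathfrak{sl}_m$-module.

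Next I would verify that $\mathfrak{u}^+_{\Sigma_m}$ annihilates $V$. For each $k\geq m$ and every $T^m(v+z)\in V$, rows $k$ and $k+1$ retain the fixed values of $T^m(v)$: the first entries are both equal to $u_m$ and, for $j\geq 2$, the $j$-th entries of both rows equal $v_{j-1}$. Therefore in the Gelfand--Tsetlin coefficient of $E_{k,k+1}(T^m(v+z))$ the product $\prod_{j=1}^{k+1}(w_{k,i}-w_{k+1,j})$ carries a zero factor at $j=i$ for every $i=1,\dots,k$, forcing $E_{k,k+1}(T^m(v+z))=0$. Combined with $\mathfrak{l}_{\Sigma_m}$-stability of $V$, a standard commutator induction $E_{i,j}=[E_{i,\ell},E_{\ell,j}]$ extends the vanishing to every root vector $E_{ij}\in\mathfrak{u}^+_{\Sigma_m}$.

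Consequently $V$ is a $\mathfrak{p}_m$-submodule, and Frobenius reciprocity produces a $\mathfrak{g}$-homomorphism $\Phi\colon M^\mathfrak{g}_{\mathfrak{p}_m}(\Sigma_m,V)\to V_{\mathcal{C}^m}(T^m(v))$ extending the inclusion $V\hookrightarrow V_{\mathcal{C}^m}(T^m(v))$. The $\mathcal{C}^m$-constraints cascade upwards from the fixed row $n+1$: the added column-$1$ relations together with $\mathcal{Q}^+$ and $\mathcal{Q}^-$ force every admissible shift $z_{ij}$ with $i\geq m$ to lie no higher than the corresponding entry of $T^m(v)$, so any basis tableau is reached from one in $V$ by a sequence of lowering operators $E_{k+1,k}$, $k\geq m$, with nonzero coefficients thanks to hypothesis (a). Thus $\Phi$ is surjective. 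Since $V_{\mathcal{C}^m}(T^m(v))$ is simple (as recorded immediately before the statement, by Theorem \ref{thm-irr}), $\Phi$ must factor as an isomorphism onto the unique simple quotient $L^\mathfrak{g}_{\mathfrak{p}_m}(\Sigma_m,V)$. The main obstacle is the surjectivity step: controlling the combinatorics of admissible shifts in rows $m,\dots,n$ and checking that the lowering operators reach every such shift without any coefficient vanishing along the way.
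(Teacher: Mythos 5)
Your construction coincides with the paper's: the subspace $V$ spanned by the tableaux whose rows $m,\dots,n+1$ are frozen is exactly the module $V_{\mathcal{D}^m}(T^m(v))$ used there, its simplicity and cuspidality as an $\mathfrak{sl}_m$-module come from Lemma \ref{Th: cuspidalBonded}, the triviality of $\mathfrak{u}^{+}_{\Sigma_m}$ is read off from the Gelfand--Tsetlin coefficients (rows $k$ and $k+1$ agree entrywise for $k\geq m$, so every numerator has a vanishing factor), and Frobenius reciprocity yields the map $\Phi$ from the generalized Verma module. The one place you deviate is the surjectivity of $\Phi$, which you attempt via an explicit cascade of lowering operators $E_{k+1,k}$, $k\geq m$, and which you yourself flag as the main obstacle (the non-vanishing of the coefficients along such a cascade, and the fact that a product of lowering operators produces a linear combination of tableaux rather than a single one, are left unverified). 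This step is unnecessary: the image of $\Phi$ contains $T^m(v)$, hence is a nonzero submodule of $V_{\mathcal{C}^m}(T^m(v))$, which is simple by Theorem \ref{thm-irr} because $\mathcal{C}^m$ is the maximal admissible set of relations satisfied by $T^m(v)$; therefore $\Phi$ is automatically surjective, and since $M^{\mathfrak{g}}_{\mathfrak{p}_m}(\Sigma_m,V)$ has a unique simple quotient, $L^{\mathfrak{g}}_{\mathfrak{p}_m}(\Sigma_m,V)\simeq V_{\mathcal{C}^m}(T^m(v))$. Dropping the combinatorial cascade closes the only incomplete point and turns your argument into precisely the proof given in the paper.
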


\begin{proof}
By construction,  $\mathcal{C}^m$ is the maximal set of admissible relations  satisfied by $T^m(v)$. Hence, $V_{\mathcal{C}^m}(T^m(v))$ is a simple Gelfand--Tsetlin $\mathfrak{g}$-module by Theorem \ref{thm-irr}. Let $\mathcal{D}^m=\mathcal{D}^+ \cup \mathcal{D}^-$, where  
\begin{align*}
\mathcal{D}^+ &= \{((i+1,j);(i,j))\ |\ 2\leq j\leq i\leq m-1\}\\
\mathcal{D}^- &=\{((i,j);(i+1,j+1))\ |\ 2\leq j\leq i\leq m-1\}.
\end{align*}
  Then $V=V_{\mathcal{D}^m}(T^m(v)) = \text{span}_\mathbb{C} {\mathcal B}_{\mathcal{D}^m}(T^m(v))$ is a simple cuspidal $\mathfrak{sl}_m$-module by Lemma \ref{Th: cuspidalBonded}, where ${\mathcal B}_{\mathcal{D}^m}(T^m(v))$  denotes the set of $\mathcal{D}^m$-realizations of the form $T^m(v+z)$, $z\in {\mathbb Z}_0^{\frac{m(m+1)}{2}}$.  It follows from \eqref{Gelfand--Tsetlin formulas} that $V$ is a $\mathfrak{p}_m$-module with the trivial action of $\mathfrak{u}^{+}_{\Sigma_m}$. Hence, we have a homomorphism 
$$\phi: M_{\mathfrak p}^{\mathfrak{g}}(\Sigma, V) \rightarrow  V_{\mathcal{C}^m}(T^m(v))$$
of $U(\mathfrak{g})$-modules, such that $\phi(u\otimes T^m(v))=u T^m(v)$ for all $u\in U(\mathfrak{g})$. Since $V_{\mathcal{C}^m}(T^m(v))$ is a simple $\mathfrak{g}$-module, then $\phi$ is surjective, and $L_{\mathfrak p}^{\mathfrak{g}}(\Sigma, V) \simeq V_{\mathcal{C}^m}(T^m(v))$. 
\end{proof}

\subsection{Localization of  highest weight modules with respect to \texorpdfstring{$E_{m1}$}{Em1} }
\label{subsection:Localization sln}

For any $2 <  m \leq n+1$ and any $k\leq m-1$ fix $i_{k}\in\{1,\ldots,k\}$.  Associated with the set $\{i_{1},\ldots,i_{m-1}\}$  define
$$
\varepsilon(i_1,\dots , i_{m-1}):=-\delta^{1i_1}-\delta^{2i_2}-\delta^{3i_3}-\ldots-\delta^{m-1,i_{m-1}} \in \mathbb{Z}_{0}^{\frac{(n+1)(n+2)}{2}}.
$$

Suppose that $T(v)$ is a $\mathcal{C}$-realization for some admissible  set of relations $\mathcal{C}$. For each $T(w)\in \mathcal B_{\mathcal{C}}(T(v))$ and any $1\leq i_k \leq k \leq m-1$  define 
\begin{equation*}
a(w,i_1,\dots , i_{m-1}):=\left\{
\begin{array}{cc}
0,& \text{ if } T(w+\varepsilon(i_1,\dots , i_{m-1}))\notin  \mathcal {B}{_\mathcal{C}}(T(v))\\
\displaystyle \prod_{s=2}^{m-1}\frac{\displaystyle \prod_{t\neq i_{s-1}}^{s-1}(w_{si_s}-w_{s-1,t})}{\displaystyle \prod_{t\neq i_s}^{s}(w_{si_s}-w_{st})},& \text{ if } T(w+\varepsilon(i_1,\dots , i_{m-1}))\in  \mathcal {B}{_\mathcal{C}}(T(v)).
\end{array}
\right.
\end{equation*}

One can easily check by direct computation the following analog of   \cite[Proposition 3.13]{FGR16}.

\begin{Pro}\label{defEm1} Let $\mathcal{C}$ be an admissible set of relations and $T(v)$ a $\mathcal{C}$-realization. If $T(w)\in \mathcal B_{\mathcal{C}}(T(v))$ then 

\begin{equation}\label{actEm1}
E_{m1}(T(w)) \ =  \displaystyle \mathop{\sum_{k=1,\dots, m-1}}_{(i_1,\ldots,i_{m-1})\in \{
1,\ldots,k \}^{m-1}} a(w,i_1,\dots , i_{m-1})\  T(w+\varepsilon(i_1,\dots , i_{m-1})),
\end{equation}
for $m\in \{3, \dots, n+1\}$.
\end{Pro}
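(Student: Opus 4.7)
\medskip

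\noindent\textbf{Proof plan.} The plan is to induct on $m$, exploiting the fact that in $\mathfrak{g}=\mathfrak{sl}_{n+1}$ we have the commutator identity $E_{m1}=[E_{m,m-1},E_{m-1,1}]$ for every $m\geq 3$. The base case $m=3$ is a direct calculation from the Gelfand--Tsetlin formulas \eqref{Gelfand--Tsetlin formulas}: one computes $[E_{32},E_{21}](T(w))$, noting that row $1$ has a single entry so that the index $i_1$ is forced to equal $1$, and checks term by term that the coefficient obtained by composing the $E_{32}$-sum with the $E_{21}$-shift matches the prescribed $a(w,1,i_2)$ for $i_2\in\{1,2\}$. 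For this base case, the empty-product convention in the denominator $\prod_{t\neq i_{s-1}}^{s-1}$ at $s=2$ plays no role, and the identity reduces to a straightforward cancellation between the two orders $E_{32}E_{21}$ and $E_{21}E_{32}$ (where the second order acts trivially on the relevant shifts since $E_{21}$ does not touch row $2$).

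For the inductive step, I assume the formula for $E_{m-1,1}$ and apply $E_{m,m-1}$ on the left and right. The key observation is that every tableau shift $\varepsilon(i_1,\ldots,i_{m-2})$ appearing in the inductive expansion of $E_{m-1,1}$ affects only rows $1,\ldots,m-2$, whereas $E_{m,m-1}$ shifts exactly one entry of row $m-1$. Consequently, in computing $[E_{m,m-1},E_{m-1,1}](T(w))$, the two orderings differ only through the change of the arguments $w_{m-1,i_{m-1}}$ of the coefficients produced by $E_{m-1,1}$ (when it is preceded by $E_{m,m-1}$). After telescoping this difference, the resulting sum reorganizes precisely into a single-row-$(m-1)$ expansion indexed by $i_{m-1}\in\{1,\ldots,m-1\}$, whose coefficient is the $s=m-1$ factor of the claimed product in $a(w,i_1,\ldots,i_{m-1})$. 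The remaining factors $s=2,\ldots,m-2$ come directly from the inductive hypothesis.

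The main obstacle will be the combinatorial bookkeeping in the telescoping step: the coefficients in the inductive formula for $E_{m-1,1}$ depend rationally on the entries of rows $\leq m-2$, but only the entry $w_{m-1,i_{m-1}}$ of row $m-1$ is shifted by $E_{m,m-1}$, so the commutator pulls out a difference of rational functions whose numerator must factor cleanly to yield the claimed product form. This factorization is the tableau analog of the calculation carried out for generic modules in \cite[Proposition 3.13]{FGR16}, and I would invoke that result at the level of rational identities in the entries $w_{ij}$, valid generically and hence as an algebraic identity.

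Finally, it remains to account for the vanishing convention: when $T(w+\varepsilon(i_1,\ldots,i_{m-1}))\notin \mathcal B_{\mathcal{C}}(T(v))$, one of the defining relations of $\mathcal{C}$ is violated after the shift. By admissibility of $\mathcal{C}$ (through the $\Diamond$-Condition and the reducedness assumptions recalled in Section \ref{Section: Rel Modules}), such a violation forces a factor $w_{s,i_s}-w_{s\pm 1,*}$ in the numerator of $a(w,i_1,\ldots,i_{m-1})$ to vanish, so setting the summand to zero in \eqref{actEm1} is consistent with the algebraic identity obtained from the commutator expansion. This is the same mechanism that makes \eqref{Gelfand--Tsetlin formulas} well defined on $V_{\mathcal{C}}(T(v))$, so no additional argument is needed beyond invoking admissibility.
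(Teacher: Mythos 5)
Your inductive scheme $E_{m1}=[E_{m,m-1},E_{m-1,1}]$ is a reasonable way to organize the ``direct computation'' that the paper only sketches (the paper simply cites the generic analog \cite[Proposition 3.13]{FGR16}), and on the generic level it does telescope as you say: since $a(w,i_1,\dots,i_{m-2})$ for $E_{m-1,1}$ involves only rows $1,\dots,m-2$ while $E_{m,m-1}$ shifts one entry of row $m-1$, the two orderings differ in a single linear factor and the difference is exactly the $s=m-1$ factor of $a(w,i_1,\dots,i_{m-1})$. (A detail: it is the $E_{m,m-1}$-coefficient, through its row-$(m-2)$ arguments, that changes between the two orderings; the coefficients produced by $E_{m-1,1}$ are unchanged, contrary to what you wrote.)

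The genuine gap is in your last paragraph, i.e.\ precisely where relation modules differ from the generic case. It is not true that $T(w+\varepsilon(i_1,\dots,i_{m-1}))\notin\mathcal B_{\mathcal C}(T(v))$ forces a numerator factor of $a(w,i_1,\dots,i_{m-1})$ to vanish: violations of relations in $\mathcal R^-$ (for instance $((1,1);(2,1))\in\mathcal C$, where $E_{21}T(w)=T(w-\delta^{11})$ has coefficient $1$ and is killed purely by convention) produce no compensating zero, which is exactly why \cite{FRZ19} needs the admissibility/$\Diamond$ conditions at all; so ``the same mechanism that makes \eqref{Gelfand--Tsetlin formulas} well defined'' is not the mechanism you describe. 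More importantly, the point you must actually control is not the final tableau but the intermediate ones: inside $V_{\mathcal C}(T(v))$, the tableau $T(w+\varepsilon(i_1,\dots,i_{m-2}))$ or $T(w-\delta^{m-1,i_{m-1}})$ may lie outside $\mathcal B_{\mathcal C}(T(v))$ in one ordering of the commutator but not the other, so the generic rational identity does not simply specialize -- only one of the two terms survives, and one has to prove it still equals $a(w,i_1,\dots,i_{m-1})$. This does work out, but it needs an argument you do not give: if $T(w+\varepsilon(i_1,\dots,i_{m-2}))\notin\mathcal B_{\mathcal C}(T(v))$ while the final tableau is in the basis, the only relation that can be violated is $((m-2,i_{m-2});(m-1,i_{m-1}))\in\mathcal R^-$ with $w_{m-2,i_{m-2}}-w_{m-1,i_{m-1}}=1$, and this equality turns the single surviving term $-c_{i_{m-1}}(w)\,a(w,i_1,\dots,i_{m-2})$ into exactly $a(w,i_1,\dots,i_{m-1})$; symmetrically, if $T(w-\delta^{m-1,i_{m-1}})\notin\mathcal B_{\mathcal C}(T(v))$ the violated relation must be $((m-1,i_{m-1});(m-2,i_{m-2}))\in\mathcal R^+$ with $w_{m-1,i_{m-1}}=w_{m-2,i_{m-2}}$, and again the surviving term equals $a(w,i_1,\dots,i_{m-1})$; the two degenerations cannot occur simultaneously. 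The same care is already needed in your base case $m=3$, which is not just ``a straightforward cancellation.'' Without this case analysis (or an equivalent argument) the inductive step, and hence the proof, is incomplete for relation modules.
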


\

\begin{Th}\label{inj-surj-Em1}  Let $T(v)$ be the Gelfand--Tsetlin tableau satisfying \eqref{tableufixed}, $\mathcal{C}$ an admissible set of relations containing $\mathcal{Q}$, for which $T(v)$ is a $\mathcal{C}$-realization. Then 
\begin{enumerate}[a)]
\item
$E_{m1}$ is injective on $V_{\mathcal{C}}(T(v))$ if and only if  $((m-1,1);(m,1)) \notin \mathcal{C};$
\item $E_{m1}$ is surjective on $V_{\mathcal{C}}(T(v))$  if and only if  $((m,1);(m-1,1)) \notin \mathcal{C}.$
\end{enumerate}
\end{Th}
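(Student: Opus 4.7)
My plan is to generalize Lemma \ref{inj-surj-E21} via the explicit action formula from Proposition \ref{defEm1}. The structural input is that, because $\mathcal{C}\supseteq \mathcal{Q}$ and $T(v)$ has the form \eqref{tableufixed}, condition (a) together with $v_{j-1}-v_1\in \mathbb{Z}$ forces $w_{k,1}-w_{k',t}\notin \mathbb{Z}$ for every $t\geq 2$ and every $T(w)\in \mathcal{B}_{\mathcal{C}}(T(v))$. Consequently, no relation in $\mathcal{C}$ can connect a first-column position with a column-$\geq 2$ one, and the principal shift $\varepsilon_0:=-\sum_{k=1}^{m-1}\delta^{k,1}$ is the unique $\varepsilon(\vec i)$ of Proposition \ref{defEm1} whose column-$\geq 2$ content $\sum_{k,\,j\geq 2}z_{k,j}$ is unchanged; every other $\vec i$ (recall that $i_1=1$ is forced) strictly decreases this content by $|\{k:i_k\geq 2\}|$.

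For the sufficiency in (a), I first check that the coefficient
\[
a(w,1,1,\ldots,1)=\prod_{s=2}^{m-1}\frac{\prod_{t=2}^{s-1}(w_{s,1}-w_{s-1,t})}{\prod_{t=2}^{s}(w_{s,1}-w_{s,t})}
\]
is never zero, each factor being a nonzero non-integer, and that the map $T(w)\mapsto T(w+\varepsilon_0)$ preserves $\mathcal{B}_{\mathcal{C}}(T(v))$ when $((m-1,1);(m,1))\notin \mathcal{C}$: the relations among $\{(k,1):k\leq m-1\}$ pair two shifted entries, so their differences are unchanged, and the only relation that could be broken by lowering $w_{m-1,1}$ while fixing $w_{m,1}$ is the excluded one. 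For a nonzero $u=\sum_{j\in J}c_jT(w^j)$, the column-$\geq 2$ content is bounded above (since $\mathcal{Q}^+$ and $z_{n+1,\ast}=0$ force $z_{k,j}\leq 0$ for $j\geq 2$); pick $j_0\in J$ of maximal content. The coefficient of $T(w^{j_0}+\varepsilon_0)$ in $E_{m1}(u)$ then equals $c_{j_0}\,a(w^{j_0},1,\ldots,1)\neq 0$, because any other $(j,\vec i)$ contributing to the same target would require $w^j=w^{j_0}+\varepsilon_0-\varepsilon(\vec i)$, and $\vec i\neq(1,\ldots,1)$ forces $w^j$ to have strictly larger column-$\geq 2$ content than $w^{j_0}$, contradicting the choice of $j_0$.

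For the converse in (a), I construct an explicit $T(w)\in\mathcal{B}_\mathcal{C}(T(v))\cap\ker E_{m1}$. Choose $z_{k,j}$ for $j\geq 2$ via the recursion $z_{k,j}=1-(v_{j-1}-v_j)+z_{k+1,j+1}$ with $z_{n+1,\ast}=0$, so that every $\mathcal{Q}^-$ relation $w_{k,j}-w_{k+1,j+1}=1$ is tight (the resulting $z$-vector also satisfies the $\mathcal{Q}^+$ inequalities automatically), then set first-column entries so that $w_{m-1,1}-w_{m,1}=1$, which is possible since $((m-1,1);(m,1))\in \mathcal{C}$ by assumption. For every $\vec i$, the shift at position $(m-1,i_{m-1})$---whose partner $(m,i_{m-1}+1)$ (or $(m,1)$) in row $m$ is untouched---breaks a tight relation: $((m-1,i_{m-1});(m,i_{m-1}+1))\in\mathcal{Q}^-$ if $i_{m-1}\geq 2$, and the assumed $((m-1,1);(m,1))$ if $i_{m-1}=1$. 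Hence $T(w+\varepsilon(\vec i))\notin \mathcal{B}_\mathcal{C}(T(v))$ for every $\vec i$, which gives $E_{m1}(T(w))=0$. Part (b) is proved by the dual argument: replace $\varepsilon_0$ by $-\varepsilon_0$ and $\mathcal{Q}^-$-tightness by $\mathcal{Q}^+$-tightness ($z_{k,j}=0$ for all $j\geq 2$); if $((m,1);(m-1,1))\notin \mathcal{C}$ the upward shift preserves $\mathcal{B}_\mathcal{C}(T(v))$ and expresses any $T(x)$ as the principal image of $T(x-\varepsilon_0)$ modulo terms of strictly smaller column-$\geq 2$ content, whereas if $((m,1);(m-1,1))\in \mathcal{C}$ the tight boundary tableau $T(x)$ with $x_{m,1}=x_{m-1,1}$ admits no $\vec i$-preimage in $\mathcal{B}_\mathcal{C}(T(v))$.

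The main obstacle I anticipate is the combinatorial bookkeeping in the two converse directions: one must verify, independently of whether the components of $\vec i$ are diagonal, partially diagonal, or lie in the first column, that the tight constraint at the row-$(m-1)$/row-$m$ interface is forced to fail for \emph{every} $\vec i$, and also check that the simultaneous tight-boundary tableau $T(w)$ (respectively $T(x)$) can be placed inside $\mathcal{B}_\mathcal{C}(T(v))$ compatibly with any additional first-column relations that $\mathcal{C}$ may contain beyond $\mathcal{Q}$.
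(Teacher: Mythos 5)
Your argument is correct and lands on the same key objects as the paper, but it is organized differently, around the grading by the column-$\geq 2$ content $S(z)=\sum_{k\leq n,\, j\geq 2}z_{kj}$. For injectivity the paper also pairs each summand $T(w^i)$ with its principal image $T(w^i+\varepsilon(1,\dots,1))$, but it excludes cancellation between different summands by a rather delicate counting argument over the index set; your observation that $\varepsilon(i_1,\dots,i_{m-1})$ with $(i_1,\dots,i_{m-1})\neq(1,\dots,1)$ strictly lowers $S$, so the principal image of a summand of maximal content cannot be hit from any other summand, is a cleaner leading-term argument. For surjectivity the paper performs an induction on the individual gaps $w_{ij}-w_{i+1,j+1}$ with a lengthy case analysis (Cases I--II and subcases), whereas you induct on $S$ itself; this is the same underlying idea (the correction terms have strictly smaller content) with the bookkeeping collapsed. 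In the two converse directions both you and the paper exhibit a tableau annihilated by $E_{m1}$, respectively a tableau with no $\varepsilon(i_1,\dots,i_{m-1})$-preimage in the basis; the paper only tightens the relations at the interface of rows $m-1$ and $m$, while you tighten all of $\mathcal{Q}^-$ (respectively take $z_{kj}=0$ for $j\geq 2$), a stronger normalization that works equally well.

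Two details should be made explicit to close your write-up. First, the surjectivity induction needs $S$ to be bounded \emph{below} on $\mathcal{B}_{\mathcal{C}}(T(v))$, not only above: this holds because $\mathcal{Q}^-\subset\mathcal{C}$ together with the frozen row $n+1$ confines each $z_{kj}$, $j\geq 2$, to a finite interval, so the induction has a base case; you only recorded the upper bound. Second, in the converse of a) you must verify that the globally $\mathcal{Q}^-$-tight tableau, with suitably chosen first-column entries, really lies in $\mathcal{B}_{\mathcal{C}}(T(v))$, i.e.\ that it also satisfies the relations of $\mathcal{C}\setminus\mathcal{Q}$: the possible extra relations between adjacent rows in columns $\geq 2$ are of the form $((k,j);(k+1,s))$ with $s>j+1$ or $((k+1,s);(k,j))$ with $s\leq j$, and they survive because $v_{j}-v_{j+1}\geq 1$, while the first-column entries can be arranged by a uniform integer shift of rows $1,\dots,m-1$ (the two mutually exclusive relations between $(m-1,1)$ and $(m,1)$ cannot both lie in $\mathcal{C}$). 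This is exactly the bookkeeping you flagged; it does go through, and the paper sidesteps most of it by tightening only rows $m-1$ and $m$, at the cost of an existence claim it likewise does not spell out.
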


\begin{proof} 
\begin{enumerate}[a)]
    \item For any $T(w)\in \mathcal B_{\mathcal{C}}(T(v))$ we have 
     $$T(w+\varepsilon(1,\dots , 1))=T(w-\delta^{11}-\delta^{21}-\delta^{31}-\ldots-\delta^{m-1,1})\in \mathcal B_{\mathcal{C}}(T(v)).$$
In fact, $(w_{i+1,1}-1)-(w_{i1}-1)=w_{i+1,1}-w_{i1}$ for all $i=1,\dots ,m-2$, $w_{m1}-(w_{m-1,1}-1)=w_{m1}-w_{m-1,1}+1\notin \mathbb{Z}_{\leq 1}$, and $(w_{i1}-1)-w_{i+1,2}=w_{i1}-w_{i+1,2}-1\notin
\mathbb{Z}$ for all $i=1,\dots ,m-1$. 
On the other hand, for any $s \in \{3,\dots m-1\}$ given that $w_{s1}-w_{s-1,2} \notin \mathbb{Z}$ and $w_{s-1,2}-w_{s-1,t} \in \mathbb{Z}$ for all $t \in \{2,\dots s-1\}$, we have $w_{s1}-w_{s-1,t} \notin \mathbb{Z}$ for all $t \in \{2,\dots s-1\}$. Hence 
$a(w,1,\dots , 1)\neq 0.$
Using the fact that $T(w+\varepsilon(1,\dots , 1)) \neq  T(w+\varepsilon(i_1,\dots , i_{m-1}))$ for all 
$(i_1,\ldots,i_{m-1})\neq (1,\dots,1)$,  
we obtain (cf. formula \eqref{actEm1})

$$ E_{m1}(T(w))=a(w,1,\dots , 1)\  T(w+\varepsilon(1,\dots , 1))+$$ 
$$+\displaystyle \sum_{k=2}^{m-1}\sum_{(i_1,\ldots,i_{m-1})\neq (1,\dots,1)} a(w,i_1,\dots , i_{m-1})\  T(w+\varepsilon(i_1,\dots , i_{m-1}))\neq 0.$$

Now, suppose that $0\not= u=\displaystyle \sum_{i \in I} c_i T(w^i) \in V_{\mathcal{C}}(T(v))$ with $c_i\not=0$ for all $i\in I$. Then 
$$E_{m1}(u)  =  \displaystyle \mathop{\sum_{i\in I, \ k=1,\dots, m-1}}_{(i_1,\ldots,i_{m-1})\in \{
1,\ldots,k \}^{m-1}} c_i \ a(w^i,i_1,\dots , i_{m-1})\  T(w^i-\varepsilon(i_1,\dots , i_{m-1})) \neq 0.$$
In fact,  for any $i \in I$ we have $c_i a(w^i,1,\dots , 1) \ T(w^i+\varepsilon(1,\dots , 1)) \neq 0$ and $T(w^i+\varepsilon(1,\dots , 1)) \neq  T(w^i+\varepsilon(i_1,\dots , i_{m-1}))$ for all   $(i_1,\ldots,i_{m-1})\neq (1,\dots,1).$
Also, as $T(w^i) \neq T(w^j)$ we obtain $T(w^i+\varepsilon(1,\dots , 1)) \neq T(w^j+\varepsilon(1,\dots , 1))$ for all $i\neq j \in I$.
Finally, suppose that for any $i \in I$ there exists $j\in I$ such that $j\neq i$ and $T(w^i+\varepsilon(1,\dots , 1)) = T(w^j+\varepsilon(i_1,\dots , i_{m-1}))$ for some $(i_1,\ldots,i_{m-1})\neq (1,\dots,1).$ Then there exists $s\in \{2,\dots m-1\}$ with $w^i_{s1}-1=w^j_{s1}$, and hence $$\displaystyle \sum_{j\in I} w^j_{s1}=\sum_{i\in I} (w^i_{s1}-1)=\sum_{i\in I} w^i_{s1}-\#I=\sum_{j\in I} w^j_{s1}-\#I,$$ which is a contradiction. Thus, there  exists $i\in I$ such that $T(w^i+\varepsilon(1,\dots , 1)) \neq T(w^j+\varepsilon(i_1,\dots , i_{m-1}))$ for all $j\neq i$ and for all $(i_1,\ldots,i_{m-1})\neq (1,\dots,1).$

Conversely, let $((m-1,1);(m,1)) \in \mathcal{C}$. By the hypothesis, $$\{((m-1,j);(m,j+1))\ | \ j=2,\dots, m-1 \} \subset  \mathcal{C}$$ and  hence there exists  $T(w)\in \mathcal B_{\mathcal{C}}(T(v))$ such that $w_{m-1,1}-w_{m1}=w_{m-1,j}-w_{m,j+1}=1$  for each $j=2,\dots m-1$. Therefore $T(w+\varepsilon(i_1,\dots , i_{m-1}))\notin  \mathcal B_{\mathcal{C}}(T(v))$ for any $1\leq i_k \leq k \leq m-1$ and $E_{m1}(T(w))=0$. 

\item First, note that for any $T(w)\in \mathcal B_{\mathcal{C}}(T(v))$  we have $$T(w'):=T(w-\varepsilon(1,\dots , 1))\in \mathcal B_{\mathcal{C}}(T(v)),$$ 
since $((m,1);(m-1,1)) \notin \mathcal{C}$.

Using \eqref{actEm1} we obtain
\begin{center}
\begin{tabular}{rl}
        $E_{m1}(T(w'))=$  & $b(w,1,\dots , 1)\  T(w)$ \\
        + & $ \displaystyle \mathop{\sum_{k=2,\dots, m-1}}_{(i_2,\ldots,i_{m-1})\neq (1,\dots,1)} b(w,i_2,\dots , i_{m-1})\  T(w+\varepsilon'(i_2,\dots , i_{m-1})),$ 
\end{tabular} 
\end{center}
with $
\varepsilon'(i_2,\dots , i_{m-1}):=\varepsilon(i_1,\dots , i_{m-1})-\varepsilon(1,\dots , 1)
$, 
and \\

$b(w,i_2,\dots , i_{m-1}):=$
\begin{equation*}
= \left\{
\begin{array}{cc}
0,& \text{if } T(w+\varepsilon'(i_2,\dots , i_{m-1}))\notin  \mathcal {B}{_\mathcal{C}}(T(v))\\
\displaystyle \prod_{s=2}^{m-1}\frac{\displaystyle \prod_{t\neq i_{s-1}}^{s-1}(w_{si_s}-w_{s-1,t}+\delta_{1i_s}-\delta_{1t})}{\displaystyle \prod_{t\neq i_s}^{s}(w_{si_s}-w_{st}+\delta_{1i_s}-\delta_{1t})},& \text{if } T(w+\varepsilon'(i_2,\dots , i_{m-1}))\in  \mathcal {B}{_\mathcal{C}}(T(v)).
\end{array}
\right.
\end{equation*}

In particular, for any $s \in \{3,\dots m-1\}$ we have  $w_{s1}-w_{s-1,t} \notin \mathbb{Z}$ for any $t \in \{2,\dots s-1\}$, and hence 
$$b(w,1,\dots , 1)=\displaystyle \prod_{s=2}^{m-1}\frac{\displaystyle \prod_{t=2}^{s-1}(w_{s1}-w_{s-1,t})}{\displaystyle \prod_{t=2}^{s}(w_{s1}-w_{st}+1)} \neq 0.$$

As $T(w) \in\mathcal B_{\mathcal{C}}(T(v))$, we have  $w_{ij}-w_{i+1,j+1}\in \{1,2,\dots, d_{ij}\}$, where $d_{ij}:=v_{j-1}-v_{n+j-i}-n+i\in \mathbb{Z}_{>0}$. We consider the following cases:\\

\textbf{Case I:} $w_{22}-w_{33}\in \{1,2,\dots, d_{22}\}$   and $w_{ij}-w_{i+1,j+1}=1$ for all $3\leq  i\leq n$ and $2\leq j\leq i$.
In this case,  $T(w+\varepsilon'(i_2,\dots , i_{m-1}))\notin \mathcal B_{\mathcal{C}}(T(v))$ for all $(i_2,\dots , i_{m-1})$ such that $i_k>1$ for some $k=3,\dots, m-1$, since  $(w_{ij}-1)-w_{i+1,j+1}=0$ for all $3\leq i \leq n$ and $2\leq j\leq i$. Therefore:
$$E_{m1}(T(w'))=b(w,1,1,\dots , 1)  T(w) +b(w,2,1,\dots , 1)  T(w+\delta^{21}-\delta^{22})  $$
If $((1,1);(2,1)) \in \mathcal{C}$ or  $((3,1);(2,1)) \in \mathcal{C}$, then $T(w+\delta^{21}-\delta^{22}) \notin \mathcal B_{\mathcal{C}}(T(v))$, when $w_{11}-w_{21}=1$ or $w_{31}-w_{21}=0$.
 Hence $b(w,2,1,\dots , 1)=0$, and $E_{m1}(T(w'))=b(w,1,1,\dots , 1)  T(w)$.

Suppose now that $((1,1);(2,1)) \notin \mathcal{C}$ and $((3,1);(2,1)) \notin \mathcal{C}$ and consider the following cases:
 \begin{enumerate}[i)]
            \item $w_{22}-w_{33}=1$.
            
In this case  $T(w+\delta^{21}-\delta^{22})\notin \mathcal B_{\mathcal{C}}(T(v))$ implying $E_{m1}(T(w'))=b(w,1,1,\dots , 1)T(w)$.

 \item  $w_{22}-w_{33}=i \in \{2,3,4,\dots, d_{22}\}$.
            
            In this case $T(w+\delta^{21}-\delta^{22})\in \mathcal B_{\mathcal{C}}(T(v))$ as $(w_{22}-1)-w_{33}=w_{22}-w_{33}-1=i-1$ and  $w_{32}-(w_{22}-1)=w_{32}-w_{22}+1\in \mathbb{Z}_{>0}$. 
           By the induction on $i$ we have  
$T(w+\delta^{21}-\delta^{22})=E_{m1}(T(w_1)) $, where $T(w_1)$ is a tableau in $V_{\mathcal{C}}(T(v))$.
So, $E_{m1}(T(w'))=b(w,1,1,\dots , 1)  T(w) +b(w,2,1,\dots , 1)  T(w+\delta^{21}-\delta^{22})$.
Consequently, $T(w)=E_{m1}(T(w_0))$ for some $T(w_0)\in  V_{\mathcal{C}}(T(v))$.
 \end{enumerate}
\

 \textbf{Case II:}  $w_{22}-w_{33}\in \{1,2,\dots, d_{22}\}$, $w_{32}-w_{43}= j \in  \{2,3,4,\dots, d_{32}\}$  and $w_{ij}-w_{i+1,j+1}=w_{33}-w_{44}=1$ for all $4\leq  i\leq n$ and $2\leq j\leq i$.
 
In this case,  $T(w+\varepsilon'(i_2,\dots , i_{m-1}))\notin \mathcal B_{\mathcal{C}}(T(v))$ for all $(i_2,\dots , i_{m-1})$ such that $i_3=3$ or $i_k>1$ for any $k=4,\dots, m-1$, as $(w_{ij}-1)-w_{i+1,j+1}=(w_{33}-1)-w_{44}=0$ for all $4\leq i \leq n$ and $2\leq j\leq i$.
 
\begin{enumerate}[i)]
            \item If $w_{22}-w_{33}=1$ then $T(w+\delta^{21}-\delta^{22})$ and  $T(w+\delta^{21}-\delta^{22}+\delta^{31}-\delta^{32})$ do not belong $\mathcal B_{\mathcal{C}}(T(v))$. On the other hand, $T(w+\delta^{31}-\delta^{32})\in \mathcal B_{\mathcal{C}}(T(v))$, as $w_{42}-(w_{32}-1) \in \mathbb{Z}_{>0}$, $(w_{32}-1)-w_{43}=j-1\geq 2$ and $(w_{32}-1)-w_{22}=w_{43}-w_{44}+j-3 \in \mathbb{Z}_{> 0}$. Following the case I-(i), we conclude that there exists $T(w_1)\in V_{\mathcal{C}}(T(v))$ such that  $T(w+\delta^{31}-\delta^{32})=E_{m1}(T(w_1))$.
Therefore, $E_{m1}(T(w')))=b(w,1,1,\dots , 1)T(w)+b(w,1,2,\dots , 1)E_{m1}(T(w_1))$.

  \item Assume $w_{22}-w_{33}=i \in  \{2,3,\dots, d_{22}\}$.
   We immediately get that         
 $T(w+\delta^{21}-\delta^{22})$ and  $T(w+\delta^{21}-\delta^{22}+\delta^{31}-\delta^{32})$ belong to $\mathcal B_{\mathcal{C}}(T(v))$. 
 On the other hand, $T(w+\delta^{31}-\delta^{32})\in \mathcal B_{\mathcal{C}}(T(v))$ if and only if $w_{43}-w_{44} \geq i-j+2$, since $w_{42}-(w_{32}-1) \in \mathbb{Z}_{>0}$ and       $(w_{32}-1)-w_{22}=w_{43}-w_{44}+j-i-2 $. Then, following the cases  I and II-(i), there exists $T(w_k)  \in V_{\mathcal{C}}(T(v))$ for each $k=1,2,3$ such that $T(w+\delta^{21}-\delta^{22})=E_{m1}\left(T(w_1)\right)$, $T(w+\delta^{21}-\delta^{22}+\delta^{31}-\delta^{32})=E_{m1}\left(T(w_2)\right)$ and $T(w+\delta^{31}-\delta^{32})=E_{m1}\left(T(w_3)\right)$ (if $w_{43}-w_{44}< i-j+2$, then $T(w_3)=0$).

Hence, 
\begin{align*}
  E_{m1}( T(w'))= & b(w,1,1,\dots , 1)T(w)+b(w,2,1,\dots , 1)E_{m1}(T(w_1))\\
    & +b(w,2,2,\dots , 1)E_{m1}(T(w_2))+ b(w,1,2,\dots , 1)E_{m1}(T(w_3)).
\end{align*}

 \end{enumerate}

Repeating the process, after finitely many steps we obtain that for any $T(w)\in \mathcal B_{\mathcal{C}}(T(v))$ there exists  $T(w_0)\in  V_{\mathcal{C}}(T(v))$ such that $T(w)=E_{m1}(T(w_0))$ which implies the surjectivity of $E_{m1}$.

Conversely, assume that $E_{m1}$ is surjective on $V_{\mathcal{C}}(T(v))$ but $((m,1);(m-1,1)) \in \mathcal{C}$.
 Choose $T(w)\in  \mathcal B_{\mathcal{C}}(T(v))$ such that   $w_{m-1,1}=w_{m1}$. As $((m,j);(m-1,j)) \in \mathcal{C}$ for all $j=2,\dots, m-1$, we can assume without loss of generality  that $w_{m-1,j}=w_{m,j}$ for all $j=2,\dots, m-1$. On the other hand, there exists $u\in V_{\mathcal{C}}(T(v))$ such that
$$E_{m1}(u)  =  \displaystyle \mathop{\sum_{i\in I, \ k=1,\dots, m-1}}_{(i_1,\ldots,i_{m-1})\in \{
1,\ldots,k \}^{m-1}} c_i \ a(w^i,i_1,\dots , i_{m-1})\  T(w^i-\varepsilon(i_1,\dots , i_{m-1})) =T(w).$$
Hence $T(w)=T(w^i-\varepsilon(i_1,\dots , i_{m-1}))$ for some $i\in I$ and $1\leq i_k \leq k \leq m-1$. Then there exists  $j\in\{1,\dots, m-1\}$ such that  $0=w_{mj}-w_{m-1,j}=w^i_{mj}-w^i_{m-1,j}+1>0$. Therefore, $((m,1);(m-1,1)) \notin \mathcal{C}$.
\end{enumerate}
\end{proof}

\

\begin{Rem}
Note that in Theorem \ref{inj-surj-Em1}  the set of relations $\mathcal{C}$ does not need  to be neither indecomposable  nor maximal set  satisfied by $T(v)$. This is the case, for example, when $u_{i+1}-u_i\in \mathbb{Z}_{\geq 0}$ for any $1\leq i <n$ and $\mathcal{C}=\{((i+1,1);(i,1))\ |\ 1\leq i\leq n\}$ or $u_{i+1}-u_i\notin \mathbb{Z}$ for any $1\leq i <n$ and $\mathcal{C}=\emptyset$.
\end{Rem}

\

Let $F:=\{E_{m_i1}\ |\ i=1,\dots,k \}$ such that, $m_i \in \{2,\dots, n+1\}$ for each $i=1,\dots, k$. We have

\begin{Pro} \label{localiz-F}  Let  $\lambda\in \mathfrak{h}^*$.
\begin{enumerate}[a)]
\item If $\lambda$
 satisfies conditions b), c) or e) for $i=1$ of Corollary \ref{prop:relationbounded}, then $D_F L(\lambda)$ is a bounded relation Gelfand-Tsetlin module.
 \item If $M=L(\lambda)$ and $F_m=\{E_{m1} \}$ for some $m\in \{2,\dots, n+1\}$ such that $E_{m1}$ acts injectivity on $M$, then $D_{F_m}(M)/M$ is simple.
 \end{enumerate}
\end{Pro}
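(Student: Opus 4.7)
The plan is to extend the tableau-type localization argument of Lemma \ref{localiz-E21} from $E_{21}$ to general $E_{m1}$, and then to the product of the commuting operators $E_{m_i1}$. For part (a), each of the conditions b), c), e) (with $i=1$) of Corollary \ref{prop:relationbounded} implies $\langle\lambda+\rho_{\pi},\alpha_k^{\vee}\rangle\in\mathbb{Z}_{>0}$ for $k>1$ (or $k>2$ in case e). By Theorem \ref{hw module}, $L(\lambda)$ admits a tableau realization $V_{\mathcal{C}}(T(v))$ in which the entries along columns $j\geq 2$ are constant down the rows and the column differences are strictly positive integers for the relevant $j$, while the first-column entry $v_1$ is not integrally related to $v_2$. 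In particular, $\mathcal{C}$ does not contain any relation of the form $((m_i-1,1);(m_i,1))$, and Theorem \ref{inj-surj-Em1}(a) then guarantees that each $E_{m_i1}$ acts injectively on $V_{\mathcal{C}}(T(v))$, so the localization $D_F L(\lambda)$ is well defined.

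The next step in part (a) is to identify $D_F L(\lambda)$ with a relation Gelfand--Tsetlin module $V_{\mathcal{D}}(T(v))$, where $\mathcal{D}$ is obtained from $\mathcal{C}$ by deleting the arrows $((m_i,1);(m_i-1,1))$ for $i=1,\ldots,k$. This requires a generalization of Lemma \ref{localiz-E21}: inverting $E_{m_i1}$ allows the entry $w_{m_i-1,1}$ in the leftmost column of any tableau to shift arbitrarily far downward, producing exactly the new tableaux in $V_{\mathcal{D}}(T(v))$ via the explicit formula of Proposition \ref{defEm1}. Admissibility of $\mathcal{D}$ is then checked by verifying the $\Diamond$-Condition for the adjoining pairs that appear along the first column, which is a local modification of the graph. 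Boundedness is inherited because $V_{\mathcal{D}}(T(v))$ embeds into a bounded dense module $V_{\mathcal{Q}}(T(v'))$ of Lemma \ref{Th: cuspidalBonded} for a suitable generic $T(v')$, and localization only shifts weight supports without increasing multiplicities.

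For part (b), we mimic the strategy of Corollary \ref{cor-twisted-isom}(b). The injectivity of $E_{m1}$ on $M=V_{\mathcal{C}}(T(v))$ forces $M\hookrightarrow D_{F_m}M$ to be strict, and the generalization of Lemma \ref{localiz-E21} gives $D_{F_m}M\simeq V_{\mathcal{D}}(T(v))$ with $\mathcal{D}$ obtained from $\mathcal{C}$ by deleting the relations of the form $((m,j);(m-1,j))$ for $j=1,\ldots,m-1$. The quotient is then identified with $V_{\mathcal{C}'}(T(v+\delta^{m-1,1}))$ where $\mathcal{C}'=\mathcal{D}\cup\{((m-1,1);(m,1))\}$; a direct check that $\mathcal{C}'$ is a maximal admissible set of relations satisfied by the shifted tableau, combined with Theorem \ref{thm-irr}, yields simplicity of $D_{F_m}M/M$.

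The main obstacle is the extension of Lemma \ref{localiz-E21} itself from $E_{21}$, whose action shifts a single tableau entry, to $E_{m1}$, which acts as a multi-term sum over index choices by Proposition \ref{defEm1}. Identifying $D_{E_{m1}}V_{\mathcal{C}}(T(v))$ with an explicit tableau module requires a careful inductive analysis in the spirit of the proof of Theorem \ref{inj-surj-Em1}(b), showing that every new tableau appearing after localization arises in the image of some polynomial expression in $E_{m1}^{-1}$, and that no tableaux outside $V_{\mathcal{D}}(T(v))$ are produced. Once this description is established, the admissibility verification and the boundedness estimate become routine.
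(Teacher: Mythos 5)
Your plan stands or falls on the explicit identification $D_F L(\lambda)\simeq V_{\mathcal D}(T(v))$, which you propose to obtain by generalizing Lemma \ref{localiz-E21}, i.e.\ by analyzing how the inverse of $E_{m1}$ acts on tableaux through the multi-term formula of Proposition \ref{defEm1}; but you never carry this out --- you yourself flag it as the ``main obstacle'' requiring ``a careful inductive analysis''. That unproved identification is the heart of the statement: without it neither the boundedness claim in (a) nor the simplicity of the quotient in (b) gets started. Note that the paper never computes the localized action at all, not even for $E_{21}$: both Lemma \ref{localiz-E21} and Proposition \ref{localiz-F} are proved by a maximality trick. One sets $N=V_{\mathcal D}(T(v))$, observes that $F$ acts bijectively on $N$ by Theorem \ref{inj-surj-Em1}, so that $M\subset D_F M\subset D_F N\simeq N$; one then shows that $M$ (and, in the induction on the number $k$ of elements of $F$, the sum $L=D_{F_1}M+\dots+D_{F_k}M$ over the $(k-1)$-element subsets $F_i$ of $F$) is a \emph{maximal} submodule of $N$, because the quotient is a simple relation module by Theorem \ref{thm-irr}; since $M$ (resp.\ $L$) is not $F$-bijective it is proper in $D_F M$, and Proposition \ref{loc-max} forces $D_F M\simeq N$. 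Replacing your deferred ``inductive analysis of $E_{m1}^{-1}$'' by this argument is what is needed; as written, the central isomorphism is asserted rather than proved, and the case $k>1$ is not addressed beyond the remark that the $E_{m_i1}$ commute (iterating a single-operator lemma would require re-verifying at each stage that the localized module is again a relation module on which the next operator is injective, which you do not do).

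Two further points. In part (b) you take $\mathcal D$ to be $\mathcal C$ with the relations $((m,j);(m-1,j))$, $j=1,\dots,m-1$, deleted; this removes too much. By Theorem \ref{inj-surj-Em1}(b) only $((m,1);(m-1,1))$ obstructs surjectivity of $E_{m1}$, and the relations with $j\geq 2$ belong to $\mathcal Q$ and must be kept: dropping them would admit tableaux violating constraints in columns untouched by the localization, giving a space strictly larger than $D_{F_m}M$. With the correct $\mathcal D=\mathcal C\setminus\{((m,1);(m-1,1))\}$, your identification of the quotient with $V_{\mathcal D\cup\{((m-1,1);(m,1))\}}(T(v+\delta^{m-1,1}))$ matches the paper, and Theorem \ref{thm-irr} gives simplicity. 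Your setup for part (a) --- injectivity of each $E_{m_i1}$ via Theorem \ref{inj-surj-Em1}(a) in the realization coming from conditions b), c), e) of Corollary \ref{prop:relationbounded}, and boundedness via the inclusion of $V_{\mathcal D}(T(v))$ (with $\mathcal D\supset\mathcal Q$) into the bounded module $V_{\mathcal Q}(T(v))$ of Lemma \ref{Th: cuspidalBonded} --- is consistent with the paper and fine.
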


\begin{proof} Suppose that $\lambda$ satisfies the condition b) of Corollary \ref{prop:relationbounded} (the proof of other cases is similar). Then $L(\lambda) \simeq V_{\mathcal{C}}(T(v))$ where $T(v)$ is the Gelfand--Tsetlin tableau \eqref{tableufixed} such that  $u_i=u_1$ for all $i=2,3, \dots, n+1$ and 
$\mathcal{C}=\mathcal{Q} \cup \{((i+1,1);(i,1))\ |\ 1\leq i\leq n\}$. 
Set $M=V_{\mathcal{C}}(T(v))$ and $N=V_{\mathcal{D}}(T(v))$, where $\mathcal{D}= \mathcal{C} \setminus \{((m_i,1);(m_i-1,1))\  |\ i=1,\dots,k \}$.
Then $M\subset D_F M \subset N$ by Theorem \ref{inj-surj-Em1}. 

Suppose first that  $k=1$. Then  $F=F_m=\{E_{m1} \}$.  Since   $E_{m1}$ is injective but not bijective on $M$, we conclude by Theorem \ref{inj-surj-Em1} that $M$ is a proper submodule of $D_{F_m }M$. 
On the other hand, consider the set of relations $\mathcal{D}_m=\mathcal{D}\cup \{((m-1,1);(m,1)) \} $. Then $V_{\mathcal{D}_m}(T(v+\delta^{m-1,1}))$ is a simple module  by Theorem \ref{thm-irr}, and $V_{\mathcal{D}_m}(T(v+\delta^{m-1,1}))\simeq N/M$. Hence, $M$ is a maximal submodule of $N$. Finally, given that $E_{m1}$ acts bijectively on $N$ we get that $N\simeq D_{F_m }M$ by Proposition \ref{loc-max}. This completes the proof in the case $k=1$. 

Now, suppose that statement holds for all subsets of $F$ with $k-1$ elements. Define $F_i:=F\setminus {\{E_{m_i1}\}}$ and $\mathcal{D}_i:= \mathcal{D} \cup \{((m_i,1);(m_i-1,1)) \}$ for any $i=1,\dots, k$. Then $D_{F_i}M\simeq V_{\mathcal{D}_i}(T(v))$. Set $L=V_{\mathcal{D}_1}(T(v))+\dots +V_{\mathcal{D}_k}(T(v))$. As $D_{F_i}M \subset D_{\{E_{m_i1}\}}D_{F_i}M\simeq D_F M$, we have $L \subset D_F M$. Since $L$ is not $F$-bijective, then $L$ is a proper submodule of $D_F M$.
On the other hand, let $T(w)=T(v+i_1 \delta^{m_1-1,1}+(i_1-1) \delta^{m_2-1,1}+\dots+ \delta^{m_{i_1}-1,1} + \dots +(k-i_s+1) \delta^{m_{i_s+1}-1,1}+\dots +\delta^{m_k-1,1})$. Consider $\mathcal{A}= \mathcal{D} \cup \{((m_i-1,1);(m_i,1))\  |\ i=1,\dots,k \}$, where $\{m_i\ |\ i=1,2,\dots k \}=\{m_1,\dots, m_{i_1} \} \cup \dots \cup \{m_{i_s},\dots, m_k \} $ is a disjoint union of sets with consecutive elements. Then $\mathcal{A}$ is a maximal set of relations satisfied by $T(w)$ and $V_{\mathcal{A}}(T(w))$ is a simple module by Theorem \ref{thm-irr}. As $N/L\simeq V_{\mathcal{A}}(T(w))$, we conclude that $L$ is a maximal submodule of $N$ and $D_F M \simeq N$ by Proposition \ref{loc-max}.

\end{proof}

\section{Simple  modules in the minimal nilpotent orbit}\label{Section: Minimal orbit}
 Recall that  $\mathfrak{g}=\mathfrak{sl}_{n+1}$.

\subsection{Minimal nilpotent orbit}
Let $k$ an admissible number for $\widehat{\mathfrak{g}}$ with denominator $q\in \mathbb{N}$. 
 In this section we discuss explicit construction of simple admissible highest weight and $\mathfrak{sl}_2$-induced $\mathfrak{g}$-modules  in the minimal nilpotent orbit $\mathbb{O}_{min}$. 
The orbit $\mathbb{O}_{min}$ is the unique minimal non-trivial nilpotent orbit of $\mathfrak{g}$ with $\dim \mathbb{O}_{min}= 2n$.
We have the following description of  $[{\overline{Pr}_k^{\mathbb{O}_{min}}}]$:

\begin{Pro} \cite[Proposition 2.10]{AFR17}\label{Pro:minimal}
Then
 \begin{align*}
  [{\overline{Pr}_k^{\mathbb{O}_{min}}}]
  =\bigsqcup_{a=1}^{q-1}\{[\bar \lambda-\frac{ap}{q}\varpi_1]\mid \lambda\in \widehat{P}_+^{p-n-1}\},
 \end{align*}
 where $\widehat{P}_+^{p-n-1}$
is the set of  level $p-n-1$  integral dominant weights of $\widehat{\mathfrak {g}}$ and $\varpi_1$ is the first fundamental weight.
 \end{Pro}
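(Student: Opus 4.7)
The plan is to derive the description of $[\overline{Pr}_k^{\mathbb{O}_{min}}]$ by combining the Kac--Wakimoto parametrization of $Pr_k$ with the orbit-decomposition of $[\overline{Pr}_k]$ stated just above the proposition. Recall from the excerpt that for admissible $k$ with denominator $q$, since $(q,r^\vee)=1$ holds automatically in type $A$, we have
\[
Pr_k=\bigcup_{y\in \widetilde{W},\ y(\widehat{\Delta}(k\Lambda_0)_+)\subset \widehat{\Delta}^{re}_+} y\cdot Pr_{k,\mathbb{Z}},
\]
and $[\overline{Pr}_k]=\bigsqcup_{\mathbb{O}\subset\overline{\mathbb{O}_q}}[\overline{Pr}_k^{\mathbb{O}}]$. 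The goal is therefore to single out exactly those $y\in\widetilde{W}$ whose twist of integral dominant admissible weights produces classes of primitive ideals with associated variety $\overline{\mathbb{O}_{min}}$.

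First, I would describe $Pr_{k,\mathbb{Z}}$ explicitly: the level condition $\lambda(K)=k$ together with integrality on $\alpha_i^\vee$ forces, after accounting for the rescaling produced by later conjugation with $y$, that the projection to $\mathfrak{h}^*$ of the underlying integral dominant weight belongs to $\widehat{P}_+^{p-n-1}$. Concretely, the shift of $k$ by the action of a translation $t_\gamma\in\widetilde W$ renormalizes the level to $p-n-1$, and the integrality of $\langle\lambda,\alpha_i^\vee\rangle$ combined with the admissibility condition (i) of Section 2.3 pins down the allowed integer dominant part.

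Next, the core step is to identify which $y$ produce the minimal orbit. For type $A_n$ one invokes Arakawa's computation of the associated variety of $J_\lambda$ for an admissible $\lambda$: the variety depends only on the translation part of the affine Weyl element $y$ modulo a suitable equivalence. In particular, translations of the form $y_a=t_{-(a/q)\,\theta^{\vee}}$ (equivalently, after the appropriate identification in type $A_n$, translations by multiples of the first fundamental coweight $\omega_1^\vee$) satisfy the positivity condition $y_a(\widehat{\Delta}(k\Lambda_0)_+)\subset \widehat{\Delta}^{re}_+$ precisely for $a=1,\dots,q-1$, and they are the unique ones (up to $W$-action) whose induced shift of the projected weight lies in $\overline{Pr}_k^{\mathbb{O}_{min}}$. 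A direct computation of $y_a\cdot\lambda$ then gives, after projection,
\[
\overline{y_a\cdot\lambda}=\bar\lambda-\tfrac{ap}{q}\varpi_1,\qquad \lambda\in\widehat{P}_+^{p-n-1}.
\]

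Finally, to establish the displayed equality as a disjoint union, I would appeal to Proposition \ref{Pro:equiv-class-adm}: two of these weights lie in the same class in $[\overline{Pr}_k^{\mathbb{O}_{min}}]$ iff they are conjugate under the dot-action of $W$, and a short calculation with central characters (the $-\frac{ap}{q}\varpi_1$ summand is not $W$-invariant modulo $\rho_\pi$-shifts for distinct $a$) shows that different values of $a\in\{1,\dots,q-1\}$ produce distinct classes. Exhaustiveness comes from the combinatorics of nilpotent orbits in $\overline{\mathbb{O}_q}$ for $\mathfrak{sl}_{n+1}$: the minimal orbit is cut out precisely by this one-parameter family of translations.

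The main obstacle is the second step: verifying that these specific translations $y_a$ give exactly the minimal nilpotent orbit and no other orbit in $\overline{\mathbb{O}_q}$. This requires the fine information on $\operatorname{Var}(J_\lambda)$ for admissible weights from \cite{Ara15}, together with a careful check that no other $y\in\widetilde{W}$ satisfying the positivity constraint yields the minimal orbit after projection. The disjointness is comparatively routine once distinctness of central characters is confirmed.
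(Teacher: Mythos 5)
The paper offers no argument of its own here: Proposition \ref{Pro:minimal} is quoted verbatim from \cite[Proposition 2.10]{AFR17}, so your sketch has to be measured against the proof given there, which does run along the general lines you indicate (Kac--Wakimoto parametrization of $Pr_k$, a translation twist producing the $\frac{ap}{q}\varpi_1$ shift, Arakawa's associated-variety results, and $W$-dot-action classes via Proposition \ref{Pro:equiv-class-adm}). The problem is that the entire content of the proposition sits in your second step, and you do not carry it out: both the claim that exactly these classes satisfy $\operatorname{Var}(J_\lambda)=\overline{\mathbb{O}}_{min}$ and the exhaustiveness claim are left as an appeal to ``fine information'' in \cite{Ara15} plus ``a careful check''; as written, that step merely restates the proposition. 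What is actually needed is the concrete criterion by which the orbit is read off an admissible weight: in type $A$ the variety $\operatorname{Var}(J_{\lambda})$ for $\lambda\in\overline{Pr}_k$ is governed by the integral root subsystem $\{\alpha\in\Delta \mid \langle\lambda+\rho_{\pi},\alpha^{\vee}\rangle\in\mathbb{Z}\}$; for $\bar\lambda-\frac{ap}{q}\varpi_1$ with $(p,q)=1$, $1\le a\le q-1$ and $\bar\lambda$ integral dominant of level $p-n-1$, the non-integral positive roots are exactly those containing $\alpha_1$, so the integral subsystem has type $A_{n-1}$ and the attached orbit, of dimension $2n$, is $\mathbb{O}_{min}$; conversely one must show that any admissible class with minimal associated variety is $W$-dot-conjugate to a weight of this shape (the integral case giving the zero orbit, larger non-integrality patterns giving larger orbits). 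None of this mechanism appears in your text, and the phrase ``the variety depends only on the translation part of $y$ modulo a suitable equivalence'' is too vague to substitute for it.

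A secondary but real defect: the elements you name, $y_a=t_{-(a/q)\theta^{\vee}}$, are not elements of $\widetilde W=W\ltimes Q^{\vee}$, since fractional translations do not lie in the group. The correct twisting elements are integral translations, essentially $t_{-a\varpi_1^{\vee}}$ (composed, if necessary, with finite Weyl group elements or a diagram automorphism so as to satisfy $y(\widehat{\Delta}(k\Lambda_0)_+)\subset\widehat{\Delta}^{re}_+$, a condition you never verify), and the factor $\frac{p}{q}$ in the shift arises because a translation $t_{\mu}$ moves the finite part of a level-$k$ weight by $(k+h^{\vee})\mu=\frac{p}{q}\mu$, not because the translation itself is by a fractional coweight. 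Since you never actually compute $\overline{y\cdot\lambda}$, this slip is symptomatic: the one computation and the one variety identification that together constitute the proof are both missing. Your disjointness argument (distinct $a$ give distinct fractional parts of the coordinates of $\lambda+\rho_{\pi}$, hence distinct classes by Proposition \ref{Pro:equiv-class-adm}) is fine, but it does not compensate for the unproven core.
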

 
\
To describe simple admissible $\mathfrak{g}$-modules in the minimal orbit $\mathbb{O}_{min}$  of level $k$ we need to find those  simple  $\mathfrak{g}$-modules $V$ for which  $\text{Ann}_{U(\mathfrak{g})}V=J_{\lambda}$, for each $\lambda\in [\overline{Pr}_k^{\mathbb{O}_{min}}]$. We start with the highest weight modules.

\

\subsection{Explicit realization of highest weight modules}  
Let
$$
k+n=\frac{p}{q}-1,\quad  p> n,\ q\geq 1 \mbox{ and }\ (p,q)=1.
$$
By Proposition \ref{Pro:minimal}, an element of  $[\overline{Pr}_k^{\mathbb{O}_{min}}]$  has the form 
$$
\bar\lambda-\frac{ap}{q}\varpi_1=\left(\lambda_{1}-\frac{ap}{q},\lambda_{2},\lambda_{3},\ldots,\lambda _{n-1},\lambda_n\right),
$$
where $\lambda_{i}\in\mathbb{Z}_{\geq 0}$, for all $ i=1,\dots ,n$  are such that $\lambda_{1}+\ldots+\lambda_n< p-n$ and $a\in \{1,2,\dots ,q-1\}$.

\begin{Ex}\label{ExMinOrbit} Let 
\textbf{$\mathfrak{g}=\mathfrak{sl}_3$}. As $\Lambda_1=(\lambda_1-\frac{ap}{q},\lambda_2) $  is regular dominant, by Corollary \ref{dom-ann},
 the simple admissible highest weight modules  in the minimal  orbit  are $L(\Lambda_i)$, $i=1,2,3$, where:
\begin{itemize}
    \item $\Lambda_2=s_1\cdot \Lambda_1=(\frac{ap}{q}-\lambda_1-2,\lambda_1+\lambda_2-\frac{ap}{q}+1)$;
\item  $\Lambda_3=s_2s_1\cdot \Lambda_1=(\lambda_2,\frac{ap}{q}-\lambda_1-\lambda_2-3)$.
\end{itemize}
These  modules have weight multiplicities bounded by $\lambda_2+1$.
\end{Ex}

Applying Corollary \ref{prop:relationbounded}, b), we get

 \begin{Th}\label{Pro: HWM} 
  Any simple admissible highest weight  module in the minimal nilpotent orbit is a bounded $\Gamma$-relation Gelfand--Tsetlin module.
 \end{Th}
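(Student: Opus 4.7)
The plan is to observe that the explicit form of $\lambda_a=\lambda-\tfrac{ap}{q}\varpi_1$ immediately satisfies condition (b) of Corollary \ref{prop:relationbounded}. To verify this, I would compute the pairings with each simple coroot: using $(p,q)=1$ together with $1\leq a\leq q-1$,
$$\langle \lambda_a+\rho_\pi,\alpha_1^\vee\rangle = \lambda_1-\tfrac{ap}{q}+1\notin\mathbb{Z},$$
while the assumption $\lambda_k\in\mathbb{Z}_{\geq 0}$ yields
$$\langle \lambda_a+\rho_\pi,\alpha_k^\vee\rangle = \lambda_k+1\in\mathbb{Z}_{>0}\qquad\text{for all }k>1.$$
Corollary \ref{prop:relationbounded}(b) then gives at once that $L(\lambda_a)$ is a bounded infinite-dimensional $\Gamma$-relation Gelfand--Tsetlin module, establishing part~(i).

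For the \emph{moreover} statement, the plan is to enumerate the remaining simple admissible highest weight modules in $\mathbb{O}_{min}$. By Proposition \ref{Pro:equiv-class-adm} combined with Proposition \ref{Pro:minimal}, every such module is of the form $L(w\cdot\lambda_a)$ for some $w\in W=S_{n+1}$ and some $\lambda_a$ as above. The critical observation is that the unique source of non-integrality in $w\cdot\lambda_a+\rho_\pi$ is the summand $-\tfrac{ap}{q}w(\varpi_1)$, whose pairing with a coroot $(e_i-e_j)^\vee$ is non-integral precisely when $r:=w(1)\in\{i,j\}$. I would therefore split into the three cases $r=1$ (condition (b) of Corollary \ref{prop:relationbounded}), $r=n+1$ (condition (a)), and $1<r<n+1$ (condition (e) with $i=r-1$), and in each case invoke Corollary \ref{prop:relationbounded} once the integer-valued pairings are checked to be strictly positive.

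The main difficulty lies in case (e): verifying $\langle w\cdot\lambda_a+\rho_\pi,\alpha_{i,i+1}^\vee\rangle\in\mathbb{Z}_{>0}$ and $\langle w\cdot\lambda_a+\rho_\pi,\alpha_k^\vee\rangle\in\mathbb{Z}_{>0}$ for $k\neq i,i+1$. This rests on careful bookkeeping of how the permutation $w$ acts on the coordinates of $\lambda_a+\rho_\pi$, together with the explicit bound $\lambda_1+\cdots+\lambda_n<p-n$ and the regularity of $\lambda_a$, both of which guarantee the required positivity.
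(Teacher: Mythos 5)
The first half of your argument is fine and is exactly what the paper does: $\lambda_a=\bar\lambda-\tfrac{ap}{q}\varpi_1$ pairs non-integrally with $\alpha_1^\vee$ (since $(p,q)=1$ and $1\leq a\leq q-1$) and gives $\lambda_k+1\in\mathbb{Z}_{>0}$ on the other simple coroots, so Corollary \ref{prop:relationbounded}(b) applies to $L(\lambda_a)$.

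The gap is in the ``moreover'' part. From the central character you correctly get that any simple admissible highest weight module in $\mathbb{O}_{min}$ is some $L(w\cdot\lambda_a)$, but you then split only on $r=w(1)$ and claim that the integral pairings come out strictly positive from ``bookkeeping'', the bound $\lambda_1+\cdots+\lambda_n<p-n$ and regularity. That is false for general $w$: take $\mathfrak{g}=\mathfrak{sl}_3$ and $w=s_2$, so $r=w(1)=1$, yet $\left<s_2\cdot\lambda_a+\rho_\pi,\alpha_2^\vee\right>=-(\lambda_2+1)\in\mathbb{Z}_{<0}$, and $L(s_2\cdot\lambda_a)$ is indeed \emph{not} a bounded relation module (none of (a)--(e) holds); similarly $w=s_3s_1$ in $\mathfrak{sl}_4$. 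Such $w$ are excluded only because $L(w\cdot\lambda_a)$ is not admissible in the minimal orbit, and your argument never uses that hypothesis beyond the central character, so it cannot rule them out. The missing step is the identification of which $w\cdot\lambda_a$ actually occur: since $\lambda_a$ is regular dominant, $\operatorname{Ann}_{U(\mathfrak g)}L(\mu)=J_{\lambda_a}$ forces $\mu$ to be a \emph{dominant} weight in $W\cdot\lambda_a$ (Proposition \ref{max-ann}, Corollary \ref{dom-ann}), and these are precisely $\Lambda_r=s_{r-1}\cdots s_1\cdot\lambda_a$, $r=1,\dots,n+1$, as in Examples \ref{ExMinOrbit} and the $\mathfrak{sl}_4$ example. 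Only after this reduction does your case split work: $\Lambda_1$ satisfies (b), $\Lambda_r$ with $1<r\leq n$ satisfies (e) with $i=r-1$ (the key positive integer being $\left<\Lambda_r+\rho_\pi,\alpha_{r-1,r}^\vee\right>=\lambda_r+1$), and $\Lambda_{n+1}$ satisfies (a). This dominance/annihilator argument (implicit in the paper via Corollary \ref{dom-ann} and the description of $[\overline{Pr}_k^{\mathbb{O}_{min}}]$) is the content your proposal is missing; the bound $\lambda_1+\cdots+\lambda_n<p-n$ plays no role in the positivity.
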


\begin{Ex} Let  \textbf{$\mathfrak{g}=\mathfrak{sl}_4$} and  $\Lambda_1=(\lambda_1-\frac{ap}{q},\lambda_2,\lambda_3)$. 
Then  the simple admissible highest weight modules  in the minimal  orbit  are $L(\Lambda_i)$, $i=1,2,3,4$, where:

\begin{itemize}
    \item $\Lambda_2=s_1\cdot \Lambda_1=(\frac{ap}{q}-\lambda_1-2,\lambda_1-\frac{ap}{q}+\lambda_2+1,\lambda_3)$;
\item $\Lambda_3=s_2s_1\cdot \Lambda_1=(\lambda_2,\frac{ap}{q}-\lambda_1-\lambda_2-3,\lambda_1+\lambda_2+\lambda_3-\frac{ap}{q}+2)$;
\item $\Lambda_4=s_3s_2s_1\cdot \Lambda_1=(\lambda_2,\lambda_3, \frac{ap}{q}- \lambda_1-\lambda_2-\lambda_3-4)$.
\end{itemize}
These modules are bounded, e.g. the weight multiplicities of 
$L(\Lambda_1)$ are bounded by $\frac{1}{2}(\lambda_2+1)(\lambda_3+1) (\lambda_2+\lambda_3+2)$.

\end{Ex}

Let $F:=\{E_{m_i1}\ |\ i=1,\dots, k \}$ such that $m_i \in \{2,\dots, n+1\}$ for each $i=1,\dots, k$. From Corollary \ref{Localization annihilator} and Proposition \ref{localiz-F} we immediately obtain 
 
\begin{Co}\label{cor-singular}
Let $n\geq 2$.
All simple subquotients of   $D_F L(\lambda')$ are admissible bounded  $\Gamma$-relation Gelfand--Tsetlin $\mathfrak g$-modules  in the minimal  orbit.
\end{Co}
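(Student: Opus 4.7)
The plan is to derive the corollary as a direct consequence of Theorem \ref{Pro: HWM}, Proposition \ref{localiz-F}(a), and Corollary \ref{Localization annihilator}, applied to any weight $\lambda'$ for which $L(\lambda')$ is a simple admissible highest weight module in the minimal orbit. By Proposition \ref{Pro:minimal} we may assume $\lambda' = \bar{\lambda} - \frac{ap}{q}\varpi_1$ with $\bar{\lambda}$ the projection to $\mathfrak{h}^*$ of an element of $\widehat{P}_+^{p-n-1}$ and $a \in \{1,\dots,q-1\}$.

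The first step is to check that $\lambda'$ is dominant and satisfies condition (b) of Corollary \ref{prop:relationbounded}. Since $\langle \varpi_1, \alpha_{1,k}^\vee\rangle = 1$ and $\langle \varpi_1, \alpha_{j,k}^\vee\rangle = 0$ for $j > 1$, a short computation yields $\langle \lambda'+\rho_\pi, \alpha_1^\vee\rangle = \lambda_1 - \frac{ap}{q} + 1 \notin \mathbb{Z}$ (as $(p,q)=1$ and $0 < a < q$), while $\langle \lambda'+\rho_\pi, \alpha_k^\vee\rangle = \lambda_k + 1 \in \mathbb{Z}_{>0}$ for $k > 1$; this is precisely condition (b). The same calculation shows that for every positive root $\alpha$ the pairing $\langle \lambda'+\rho_\pi, \alpha^\vee\rangle$ is either a positive integer (if $\alpha$ does not involve $\alpha_1$) or non-integer (if it does), so $\lambda'$ is dominant.

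Next, by Theorem \ref{Pro: HWM}, $L(\lambda')$ is a bounded $\Gamma$-relation Gelfand--Tsetlin module. From the proof of Proposition \ref{localiz-F}(a) it is realized as $V_{\mathcal{C}}(T(v))$ with $\mathcal{C} = \mathcal{Q} \cup \{((i+1,1);(i,1)) \mid 1 \leq i \leq n\}$, and since none of the reverse relations $((m-1,1);(m,1))$ belong to $\mathcal{C}$, Theorem \ref{inj-surj-Em1}(a) shows that each $E_{m_i 1} \in F$ acts injectively on $L(\lambda')$. Proposition \ref{localiz-F}(a) then guarantees that $D_F L(\lambda')$ is a bounded $\Gamma$-relation Gelfand--Tsetlin module. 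Because boundedness forces finite length (as noted after Lemma \ref{Th: cuspidalBonded}, via \cite{Fer90}), $D_F L(\lambda')$ admits a finite composition series; each simple subquotient $N$ is bounded by inheritance, and the explicit tableau realization together with Theorem \ref{thm-irr} exhibits every composition factor as $V_{\mathcal{C}'}(T(v'))$ for a maximal admissible set of relations $\mathcal{C}'$ extending that of $D_F L(\lambda')$, hence as a $\Gamma$-relation Gelfand--Tsetlin module.

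Finally, Corollary \ref{Localization annihilator} applies to the dominant $\lambda'$ with $F$ acting injectively, giving $\operatorname{Ann}_{U(\mathfrak{g})} L(\lambda') = \operatorname{Ann}_{U(\mathfrak{g})} N$ for any non-zero simple subquotient $N$. Since $\operatorname{Ann}_{U(\mathfrak{g})} L(\lambda') = J_{\lambda'}$ with $[\lambda'] \in [\overline{Pr}_k^{\mathbb{O}_{min}}]$, Theorem \ref{Th;subsquence} forces $N$ to be admissible of level $k$ and in $\mathbb{O}_{min}$. The one delicate point is the assertion that simple subquotients of a relation Gelfand--Tsetlin module are again relation modules in the sense of Section \ref{Section: Rel Modules}; this is the main thing to justify carefully, and it is handled by the finite-length property combined with the simplicity criterion of Theorem \ref{thm-irr}.
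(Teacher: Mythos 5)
Your argument is correct and follows essentially the same route as the paper, which deduces the corollary directly from Proposition \ref{localiz-F} (giving that $D_F L(\lambda')$ is a bounded $\Gamma$-relation Gelfand--Tsetlin module, after checking that $\lambda'$ satisfies condition b) of Corollary \ref{prop:relationbounded} and that $F$ acts injectively) together with Corollary \ref{Localization annihilator} and Theorem \ref{Th;subsquence} for admissibility in the minimal orbit. The ``delicate point'' you flag --- that every simple subquotient of the localized relation module is again a relation module --- is likewise left implicit in the paper, whose proof of Proposition \ref{localiz-F} exhibits the relevant subquotients explicitly as modules of the form $V_{\mathcal{C}'}(T(w))$, so your treatment is consistent with the paper's level of detail.
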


\
\begin{Rem}
\begin{enumerate}[(i)] 
\item  Corollary \ref{cor-singular} for $\mathfrak{sl}(3)$ was shown in   \cite[Theorem 5.6]{AFR17}.
 \item All simple modules  in Corollary \ref{cor-singular}  are highest weight modules (with respect to some Borel subalgebra) with bounded weight multiplicities.
 \end{enumerate}
\end{Rem}
\
We have  from Corollary \ref{cor-outras Borel}

\begin{Co}\label{cor-diff-Borel-same}
 Let ${\mathfrak b}={\mathfrak b}(\pi)$, $\beta_j\in \pi$, $j=1, \ldots, t $ with
$\left<\lambda+\rho_{\pi}, \beta_j^{\vee} \right> \in\mathbb{Z}_{\geq 0}$ for all $j$, and let  $w\in W$ be such that $\left<w(\lambda+\rho_{\pi}), \beta_j^{\vee} \right> \in\mathbb{Z}_{< 0}$ for all $j=1, \ldots, t$.   Then 
  $L(\lambda)\simeq L_{\mathfrak b(w\pi)}(w\lambda)$  is a $\Gamma_{\mathcal F}$-relation Gelfand--Tsetlin module, where $\Gamma_{\mathcal F}=w\Gamma$.
\end{Co}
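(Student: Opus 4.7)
The plan is to reduce this corollary to Theorem \ref{Pro: HWM} via the same change-of-Borel trick used in the proof of Corollary \ref{cor-outras Borel}, relying on two ingredients: a $\mathfrak{g}$-module identification and the Weyl-covariance of the relation module construction.

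First, I would establish the isomorphism $L_{\mathfrak{b}(\pi)}(\lambda)\simeq L_{\mathfrak{b}(w\pi)}(w\lambda)$. This is a standard fact about simple highest weight modules and is precisely the identification implicitly invoked in the proof of Corollary \ref{cor-outras Borel}: under partial dominance $\langle\lambda+\rho_\pi,\beta_j^{\vee}\rangle\in\mathbb{Z}_{\geq 0}$, each reflection $s_{\beta_j}$ applied simultaneously to the Borel and to the highest weight preserves the isomorphism class of $L(\lambda)$, and the assumption $\langle w(\lambda+\rho_\pi),\beta_j^{\vee}\rangle\in\mathbb{Z}_{<0}$ ensures that $w\lambda$ is indeed the highest weight with respect to the shifted Borel $\mathfrak{b}(w\pi)$ (the new highest weight vector lies among the extremal weights of $L(\lambda)$ and is annihilated by the nilradical of $\mathfrak{b}(w\pi)$).

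Second, I would invoke the $W$-equivariance of the relation module formalism, already noted at the end of Section \ref{Section: Rel Modules}: applying $w$ to the formulas \eqref{Gelfand--Tsetlin formulas} transports a $\Gamma$-relation realization of a simple highest weight module (with respect to $\mathfrak{b}(\pi)$) into a $w\Gamma$-relation realization of its $w$-twist (with respect to $\mathfrak{b}(w\pi)$). Consequently, $L_{\mathfrak{b}(w\pi)}(w\lambda)$ is a $w\Gamma$-relation Gelfand--Tsetlin module if and only if its image under the untwisting, $L_{\mathfrak{b}(\pi)}(\lambda)$, is a $\Gamma$-relation Gelfand--Tsetlin module.

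Finally, Theorem \ref{Pro: HWM} guarantees that every simple admissible highest weight module in the minimal nilpotent orbit is a bounded $\Gamma$-relation Gelfand--Tsetlin module, so $L(\lambda)$ qualifies, and the combined chain yields the desired conclusion. The principal obstacle is step one: rigorously verifying $L(\lambda)\simeq L_{\mathfrak{b}(w\pi)}(w\lambda)$ requires locating the correct extremal weight vector and checking its annihilation by the twisted nilradical under the stated integrality conditions; once this fact (standard, and already used in the proof of Corollary \ref{cor-outras Borel}) is secured, steps two and three reduce to straightforward bookkeeping with the Weyl action on the Gelfand--Tsetlin data.
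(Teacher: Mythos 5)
Your argument is correct and is essentially the paper's own: the corollary is deduced there in one line from Corollary \ref{cor-outras Borel}, whose proof consists precisely of your three steps — the identification $L(\lambda)\simeq L_{\mathfrak b(w\pi)}(w\lambda)$ (asserted in the paper without further justification, as you note), the $W$-equivariance of the relation-module realization obtained by applying $w$ to the Gelfand--Tsetlin formulas, and the criterion that $L(\lambda)$ itself is a $\Gamma$-relation module. The only cosmetic difference is that you invoke Theorem \ref{Pro: HWM}, thereby implicitly taking $\lambda$ admissible in the minimal orbit, while the paper appeals to Corollary \ref{prop:relationbounded}; both supply the same restriction on $\lambda$ that the statement leaves implicit, so the two routes coincide.
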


\

We also have the following result.

\begin{Co}\label{lem-diff-Borel} Let $\lambda\in\mathfrak{h}^*$, $\pi$ a basis of the root system, 
 $L(\lambda)=L_{\mathfrak b(\pi)}(\lambda)$ an admissible highest weight module in the minimal  orbit (with respect to the Borel subalgebra $\mathfrak b(\pi)$) and    $\beta\in \pi$ is  such that $\left<\lambda,\beta^{\vee} \right> \notin\mathbb{Z}_{\geq 0}$. Then
 
\begin{enumerate}[a)] 
\item The module $L_{\mathfrak b(s_{\beta}\pi)}(s_{\beta}(\lambda+\beta))$ 
 is an admissible $s_{\beta}\Gamma$-relation Gelfand--Tsetlin module in the minimal  orbit.
 \item Let $i>1$, $\beta=\beta_i$ the first simple root  of $\pi$ such that $\left<\lambda,\beta^{\vee} \right> \notin\mathbb{Z}_{\geq 0}$
  and $w=s_{i-1} s_i s_{i-2} s_{i-1} \cdots  
s_2s_1s_3s_1s_2 \in W$. Then $L_{\mathfrak b(w^{-1}\pi)}(w^{-1}\lambda)$ is a $\Gamma$-relation Gelfand--Tsetlin module in the minimal  orbit.
 \item For any $x\in \mathbb C$, $D_{f_{\beta}}^x L(\lambda)$ is a $w\Gamma$-relation Gelfand--Tsetlin module in the minimal  orbit.
\end{enumerate}
\end{Co}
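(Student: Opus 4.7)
The proof divides naturally into the three parts, with (b) iterating (a) and (c) combining (b) with Theorem \ref{twistedE21}.

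For part (a), the observation is that since $\beta\in\pi$ is simple one has $\langle\rho_\pi,\beta^\vee\rangle=1$, hence $s_\beta(\lambda+\beta)=s_\beta\cdot\lambda$, and the modules $L_{\mathfrak b(\pi)}(\lambda)$ and $L_{\mathfrak b(s_\beta\pi)}(s_\beta\cdot\lambda)$ share the central character $\chi_\lambda$. The admissibility of $\lambda$ places its class in $[\overline{Pr}_k^{\mathbb O_{min}}]$; combining Proposition \ref{max-ann} with Proposition \ref{Pro:equiv-class-adm} gives that $\operatorname{Ann}_{U(\mathfrak g)}L_{\mathfrak b(s_\beta\pi)}(s_\beta\cdot\lambda)=J_\lambda$, so this module is admissible and lies in $\mathbb O_{min}$. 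For the $s_\beta\Gamma$-relation property, I would transport Theorem \ref{hw module} across $s_\beta$ and verify that $s_\beta\cdot\lambda$ relative to $s_\beta\pi$ satisfies condition a) or b) of that theorem; the hypothesis $\langle\lambda,\beta^\vee\rangle\notin\mathbb Z_{\geq 0}$ is exactly what is needed to eliminate the obstruction.

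For part (b), iterate (a) along the reduced expression defining $w$, a variant of the element $w_i$ in Lemma \ref{lem-key} adapted to weights in $\overline{Pr}_k^{\mathbb O_{min}}$ whose unique non-integral pairing with a simple coroot occurs at $\beta=\beta_i$. At each step, (a) preserves admissibility, membership in $\mathbb O_{min}$, and the relation-module property with respect to the appropriately conjugated Gelfand--Tsetlin subalgebra. The specific $w$ is chosen so that after all reflections one returns to the standard $\Gamma$, and the transformed weight $w^{-1}\lambda$ fits one of the explicit patterns of Theorem \ref{hw module}. This mirrors the combinatorial argument in Lemma \ref{lem-key} but must be reproved for the non-regular admissible weight.

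For part (c), Proposition \ref{loc-ann} yields $\operatorname{Ann}_{U(\mathfrak g)}L(\lambda)\subseteq\operatorname{Ann}_{U(\mathfrak g)}D_{f_\beta}^x L(\lambda)$, and since $L(\lambda)$ has bounded weight multiplicities by Theorem \ref{Pro: HWM} so does the twisted localization, which guarantees finite-dimensional weight spaces on every simple subquotient and hence central character $\chi_\lambda$; thus the annihilator equals $J_\lambda$ and the module is in $\mathbb O_{min}$. Under the relabeling of simple roots induced by $w^{-1}$ from part (b), the root $\beta$ becomes $\alpha_1$ and $f_\beta$ becomes $E_{21}$; applying Theorem \ref{twistedE21} in the $w$-conjugated flag yields that $D_{f_\beta}^x L(\lambda)$ is a $w\Gamma$-relation Gelfand--Tsetlin module, as required.

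The main technical obstacle lies in part (b): one must verify combinatorially that the composite $w^{-1}$ indeed sends the single non-integral direction of $\lambda$ to the first coordinate while keeping every other pairing $\langle w^{-1}\lambda+\rho,\alpha^\vee\rangle$ in the correct integral range prescribed by Theorem \ref{hw module}. This demands an explicit bookkeeping of how each simple reflection in the product acts on $\lambda$ and permutes the positive roots, tracking the $s_\beta$ moves through the flag in the spirit of Lemma \ref{lem-key} but now for the shifted weights $\lambda_a=\lambda-\tfrac{ap}{q}\varpi_1$.
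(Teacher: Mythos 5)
Your argument for admissibility and membership in $\mathbb{O}_{min}$ rests on an invalid inference: equality of central characters does not give equality of annihilators. From ``$M$ has central character $\chi_\lambda$'' and Proposition \ref{max-ann} you only get $\operatorname{Ann}_{U(\mathfrak g)}M\subseteq J_\lambda$ (a Verma module with the same central character has strictly smaller annihilator), whereas being admissible in the minimal orbit requires the equality $\operatorname{Ann}_{U(\mathfrak g)}M=J_\mu$ for some $\mu\in\overline{Pr}_k^{\mathbb{O}_{min}}$; Proposition \ref{Pro:equiv-class-adm} only compares $J_\lambda$ and $J_\mu$ for two weights already known to lie in $\overline{Pr}_k$. (Your central character claim is itself shaky: the character of $L_{\mathfrak b(s_\beta\pi)}(\nu)$ is computed with $\rho_{s_\beta\pi}=s_\beta\rho_\pi$, not $\rho_\pi$.) The paper closes exactly this gap in a) by a combinatorial identification you omit: writing $\lambda=s_t\cdots s_1\cdot\Lambda$ with $\Lambda=\bar\lambda-\frac{ap}{q}\varpi_1$ as in Proposition \ref{Pro:minimal}, it locates the possible positions of $\beta$ and checks that $s_\beta\cdot\lambda=s_\beta(\lambda+\beta)$ is again one of the admissible highest weights in the minimal orbit, so the module in a) is an admissible simple highest weight module for the Borel $\mathfrak b(s_\beta\pi)$ and Theorem \ref{Pro: HWM}, transported by $s_\beta$, gives the $s_\beta\Gamma$-relation property. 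Likewise in c) the annihilator statement should come from Proposition \ref{loc-ann} together with Corollary \ref{Localization annihilator} (equality of $\operatorname{Ann}L(\lambda)$ and $\operatorname{Ann}D^x_{f_\beta}L(\lambda)$), not from a central-character detour.

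The more serious gap is b), where you yourself flag the ``main technical obstacle'' and leave it unresolved: the verification that $w^{-1}$ places the non-integral direction first while keeping all other pairings in the range required by Theorem \ref{hw module} is precisely the content of the statement, so deferring it means the proposal does not prove b) (and c), which you derive from b), inherits the gap). Moreover, no new combinatorics is needed: the paper simply invokes Lemma \ref{lem-key}, already proved by an explicit tableau construction, which applies whenever $\lambda$ satisfies condition a) or e) of Corollary \ref{prop:relationbounded}; an admissible weight in the minimal orbit whose first non-integral simple pairing occurs at $i>1$ does satisfy a) or e), and no regularity is required, contrary to your remark that the lemma ``must be reproved'' for these weights. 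Note also that iterating a) would only yield relation structure with respect to successively conjugated Gelfand--Tsetlin subalgebras for the reflected weights $s_\beta\cdot\lambda$, which is not the assertion of b) about the standard $\Gamma$ and the single weight $w^{-1}\lambda$; the paper's route is a direct citation of Lemma \ref{lem-key}, after which c) follows by conjugating the flag so that $f_\beta$ becomes $E_{21}$ and applying Theorem \ref{twistedE21}, as you correctly anticipate.
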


\begin{proof}
 Let $\Lambda=$
$
\bar\lambda-\frac{ap}{q}\varpi_1$, where  $\bar\lambda=(\lambda_1, \ldots, \lambda_n)
$ with non-negative integers $\lambda_i$ for all $i=1, \ldots, n$. Then $\lambda=s_t\ldots s_1\cdot \Lambda$ for some $t\leq n$. If $t=n$ then $\lambda$ has the last component non-integral in which case $\beta=\alpha_n$. If $t<n$ then $\lambda$ has exactly two non-integral components in the places $t$ and $t+1$ and $\beta=\alpha_t$ or $\beta=\alpha_{t-1}$. Hence, 
$L(s_{\beta}\cdot \lambda)$ is a simple admissible highest weight module  in the minimal  orbit. Note that
$s_{\beta}\cdot \lambda=s_{\beta}(\lambda+\beta)$ and consider $L_{\mathfrak b(s_{\beta}\pi)}(s_{\beta}(\lambda+\beta))$. This is the  highest weight module with respect to the Borel subalgebra $\mathfrak b(s_{\beta}\pi)$ and the corresponding highest weight (with respect to $\mathfrak b(s_{\beta}\pi)$) is $s_{\beta}(\lambda+\beta)$. Therefore, $L_{\mathfrak b(s_{\beta}\pi)}(s_{\beta}(\lambda+\beta))$ 
 is an admissible module in the minimal  orbit, and it is a $\Gamma'$-relation Gelfand-Tsetlin module 
 where $\Gamma'$ is the standard Gelfand-Tsetlin subalgebra of $s_{\beta}\pi$, that is 
 $\Gamma'=s_{\beta}\Gamma$. This shows a). 
 
  Since $L(\lambda)$ is a module in the nilpotent orbit, then $L_{w^{-1}\mathfrak b}(w^{-1}\lambda)$ is also an admissible module in the nilpotent orbit. Hence,  $L_{\mathfrak b(w^{-1}\pi)}(w^{-1}\lambda)$ is a $w^{-1}\Gamma$-relation Gelfand--Tsetlin module. The statement b) follows from  Lemma \ref{lem-key}.
 Now, twisting $L_{\mathfrak b(w^{-1}\pi)}(w^{-1}\lambda)$ by $w$, that is applying $w$ to the corresponding Gelfand-Tsetlin formulas, we obtain that $L(\lambda)$ is a $w\Gamma$-relation Gelfand--Tsetlin module.
 Note that $w\in W$ is the element of minimal length such that $\beta$ is the first simple root of $w\pi$. Hence,  c) follows from Theorem \ref{twistedE21}.
\end{proof}

\

\subsection{Classification of \texorpdfstring{$\mathfrak{sl}_2$}{sl2}-induced modules}
Let  $k=\frac{p}{q}-n-1$ be an admissible number for $\widehat{\mathfrak{g}}$.
 In  this section we complete the construction of all simple admissible $\mathfrak{g}$-modules in the minimal orbit, which are the quotients of modules induced  from  parabolic subalgebras with the Levi factor isomorphic to  $\mathfrak{sl}_2+\mathfrak{h}$. All such modules are $\Gamma_{\mathcal F}$-relation modules for some $\mathcal F$.

Let $M$ be an admissible  $\mathfrak{g}$-module of level $k$. Consider a parabolic subalgebra  $\mathfrak{p}=\mathfrak{p}_{\Sigma}=\mathfrak{l}_{\Sigma}+\mathfrak{n}_{\Sigma}$  of $\mathfrak{g}$, where
 $\Sigma$ consists of one simple root $\beta$. Denote by  $M^{\mathfrak{n}}$ the subspace of all $\mathfrak{n}_{\Sigma}$-invariants, which is  an  $\mathfrak{l}_{\Sigma}$-module.
Suppose  $M=L_{\mathfrak p}^{\mathfrak g}(\Sigma, N)$ for some simple weight $\mathfrak{l}_{\Sigma}$-module $N$ 
with $\mathfrak{n}_{\Sigma}N=0$. 
Then   
$M^{\mathfrak{n}}\simeq N$  is  admissible $\mathfrak{l}_{\Sigma}$-module of level $k_{\beta}=\frac{2}{(\beta,\beta)}(k+n+1)-2$ by  \cite[Theorem 2.12]{AFR17}. 
\

The following is straightforward.

\begin{Pro}\label{prop-sl2}
Let $\mathfrak a=\mathfrak{sl}_2$, $V=V(\gamma, \mu)$  a simple dense weight $\mathfrak a$-module, $\gamma, \mu\in \mathbb C$.  Then $V$ is admissible of level $k$ in the minimal orbit if and only if   $\mu=\lambda-  \dfrac{ap}{q}+2x$ and  $\gamma=\left(\lambda-  \dfrac{ap}{q}+1 \right)^2$, where $\lambda\in \{0,1,\dots, p-2 \}$, $a\in \{ 1, \ldots, q-1 \}$, $x\in  \mathbb{C}\setminus{\mathbb{Z}}$ and $x-\dfrac{ap}{q} \notin \mathbb{Z}$.
\end{Pro}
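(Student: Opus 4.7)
The plan is to reduce both implications to the parameterization of $[\overline{Pr}_k^{\mathbb{O}_{\min}}]$ for $\mathfrak{a}=\mathfrak{sl}_2$ given by Proposition \ref{Pro:minimal} (with $n=1$), combined with the centrality of the Casimir $c_\alpha$ and the twisted-localization machinery from Section \ref{Section: Loc Functors}. By Theorem \ref{Th;subsquence}, the simple module $V$ is admissible of level $k$ in $\mathbb{O}_{\min}$ if and only if $\operatorname{Ann}_{U(\mathfrak{a})}V = J_{\mu_0}$ for a representative of the form $\mu_0=\lambda-\frac{ap}{q}$ with $\lambda\in\{0,\ldots,p-2\}$ and $a\in\{1,\ldots,q-1\}$; in particular $\mu_0\notin\mathbb{Z}_{\geq 0}$, so $L(\mu_0)$ is infinite-dimensional and $f=E_{21}$ acts injectively on it.

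For the forward direction, I would first invoke the centrality of $c_\alpha$: since it acts on $L(\mu_0)$ as $(\mu_0+1)^2$ and $V$ has the same annihilator, it must act on $V$ by the same scalar, which pins down $\gamma=(\lambda-\frac{ap}{q}+1)^2$. Writing $\mu=\mu_0+2x$ defines $x\in\mathbb{C}$ uniquely modulo $\mathbb{Z}$, and the density of $V$ amounts to the absence of any weight vector annihilated by $E_{12}$ or $E_{21}$. Using the identity $c_\alpha=(H+1)^2+4E_{21}E_{12}$, a short computation shows that a weight vector of weight $\nu\in\mu+2\mathbb{Z}$ can be $E_{12}$-annihilated only when $\nu\in\{\mu_0,\,-\mu_0-2\}$ and can be $E_{21}$-annihilated only when $\nu\in\{\mu_0+2,\,-\mu_0\}$. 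Each of these four conditions translates into one of $x\in\mathbb{Z}$ or $x+\mu_0\in\mathbb{Z}$; since $\lambda\in\mathbb{Z}$ the latter is equivalent to $x-\frac{ap}{q}\in\mathbb{Z}$, and excluding all four yields exactly the two stated conditions $x\notin\mathbb{Z}$ and $x-\frac{ap}{q}\notin\mathbb{Z}$.

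For the converse, given the data $(\lambda,a,x)$ I would set $\mu_0=\lambda-\frac{ap}{q}$ and take $V:=D^x_f L(\mu_0)$. Since $f$ acts injectively on $L(\mu_0)$, Corollary \ref{Localization annihilator} yields $\operatorname{Ann}_{U(\mathfrak{a})}V=\operatorname{Ann}_{U(\mathfrak{a})}L(\mu_0)=J_{\mu_0}$, so $V$ is admissible in $\mathbb{O}_{\min}$ by Theorem \ref{Th;subsquence}. The twisted $\mathfrak{sl}_2$-action (the rank-one analogue of Lemma \ref{lem-formulas}) shows that $V$ has weight support $(\mu_0+2x)+2\mathbb{Z}=\mu+2\mathbb{Z}$ and that $c_\alpha$ still acts by $(\mu_0+1)^2=\gamma$; the two non-integrality conditions on $x$ then preclude any weight vector from being killed by $E_{12}$ or $E_{21}$, so $V$ is simple and dense, hence isomorphic to $V(\gamma,\mu)$.

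The only mildly delicate step is the translation of ``density'' into the two non-integrality conditions on $x$ in the forward direction; the rest is a direct combination of Theorem \ref{Th;subsquence}, Proposition \ref{Pro:minimal}, Corollary \ref{Localization annihilator}, and the explicit formulas for twisted localization.
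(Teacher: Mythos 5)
Your argument is correct. Note that the paper itself offers no proof here (it is dismissed as ``straightforward''), so there is nothing to match step by step; what you wrote is a sound fleshing-out that stays inside the paper's own machinery. Both directions check out: the forward direction correctly combines Theorem \ref{Th;subsquence} with Proposition \ref{Pro:minimal} for $n=1$, uses centrality of $c_\alpha$ to pin down $\gamma=(\mu_0+1)^2$, and your four-case weight computation (via $c_\alpha=(H+1)^2+4FE=(H-1)^2+4EF$) translates density exactly into $x\notin\mathbb Z$ and $x+\mu_0\notin\mathbb Z$, i.e. $x-\frac{ap}{q}\notin\mathbb Z$. For the converse, your route through $D^x_fL(\mu_0)$ and Corollary \ref{Localization annihilator} (legitimate since $\mu_0+1\notin\mathbb Z$ makes $\mu_0$ dominant and $f$ injective on the simple Verma module $L(\mu_0)=M(\mu_0)$) is slightly heavier than strictly necessary: for $\mathfrak{sl}_2$ one can argue directly that the annihilator of any infinite-dimensional simple weight module, in particular of $V(\gamma,\mu)$, is the minimal primitive ideal $U(\mathfrak a)\ker\chi$, which for non-integral $\mu_0$ equals $\operatorname{Ann}_{U(\mathfrak a)}L(\mu_0)=J_{\mu_0}$, so matching the Casimir eigenvalue already gives admissibility. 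What your localization route buys in exchange is the explicit identification $V(\gamma,\mu)\simeq D^x_fL(\mu_0)$, which is exactly the picture used in the Remark following the proposition and in Theorem \ref{sl2 Induced}, and it also proves existence of the module with the stated invariants. Two small points you should make explicit: when concluding simplicity of $D^x_fL(\mu_0)$ you are tacitly using that all its weight multiplicities are one (so injectivity of $E_{12}$ and $E_{21}$ on weight vectors forces transitivity between weight spaces), and when concluding $D^x_fL(\mu_0)\simeq V(\gamma,\mu)$ you are invoking the classification of simple dense $\mathfrak{sl}_2$-weight modules by the pair (Casimir eigenvalue, support coset); both are standard and harmless.
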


\begin{Rem} In the proposition above 
 we have  $V=V_{\mathcal{\mathcal{C}}}(T(v))$, where $\mathcal{C}=\emptyset$, $T(v)=T(v_{ij})$ is the Gelfand--Tsetlin tableau with height 2,  such that $v_{11}=\dfrac{\mu}{2}$,  $v_{21}=\dfrac{1}{2}\left(\lambda-\dfrac{ap}{q} \right)$, and $v_{22}=\dfrac{1}{2} \left(\dfrac{ap}{q}-\lambda- 2\right)$ (cf. Theorem \ref{sl2 Induced}).  The set $\mathcal{B}_{\mathcal{\mathcal{C}}}(T(v))= \{ T(v+\ell \delta^{11}) \ |\  \ell \in \mathbb{Z} \}$ is a basis of $V$.
\end{Rem}

\

Applying Proposition \ref{prop-sl2}, Theorem \ref{Th;subsquence} and Theorem \ref{sl2 Induced}
we obtain the following statement.

\begin{Co}\label{minimalinducedsl2}  Let $n>1$, $\gamma\in \mathbb C$, $\mu=(\mu_1, \ldots, \mu_n)\in \mathfrak{h}^*$, such that 
$V\simeq V(\gamma, \mu_1)$ is a simple dense weight $\mathfrak{sl}_2$-module and $\mu_i=\left<\mu, \alpha_i^{\vee} \right>$, 
$i=1, \ldots, n$.
Let $L(\gamma,\mu)=L_\mathfrak{p}^{\mathfrak{g}}(\{ \alpha \}, V)$.  Then $L(\gamma,\mu)$ is admissible of level $k$ in the minimal orbit if and only if   $\mu_1=\lambda_1-  \dfrac{ap}{q}+2x$, $\mu_2=\lambda_2-x$, $\mu_j=\lambda_j$ for all $j=3,\dots, n$ and  $\gamma=\left(\lambda_1-  \dfrac{ap}{q}+1 \right)^2$ with $\{\lambda_{i}\}_{i=1,\ldots,n} \subset \mathbb{Z}_{\geq 0}$ such that $\lambda_{1}+\ldots+\lambda_n< p-n$,  $a\in \{1,2,\dots ,q-1\}$, $x\in  \mathbb{C}\setminus{\mathbb{Z}}$  and $x-\dfrac{ap}{q} \notin \mathbb{Z}$.
\end{Co}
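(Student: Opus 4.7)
\smallskip

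\noindent\textbf{Proof proposal.} The plan is to combine the three cited results in the order they are listed. First, I would invoke Theorem \ref{sl2 Induced} to write $L(\gamma,\mu)\simeq D_f^{x}L(\lambda)$ for a suitable highest weight $\lambda\in\mathfrak{h}^*$, together with the weight constraints $\langle\mu-\lambda,\alpha_1^{\vee}\rangle=2x$, $\langle\mu-\lambda,\alpha_2^{\vee}\rangle=-x$, $\langle\mu-\lambda,\alpha_j^{\vee}\rangle=0$ for $j\geq 3$, the Casimir relation $\gamma=(\langle\lambda+\rho_{\pi},\alpha_1^{\vee}\rangle)^2=(2x-\mu_1-1)^2$, and the twist condition $x-\langle\mu+\rho_{\pi},\alpha_1^{\vee}\rangle\notin\mathbb Z$. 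Since $V(\gamma,\mu_1)$ is simple dense, Proposition \ref{prop-sl2} (applied to the $\mathfrak{sl}_2$-factor) forces $x\in\mathbb C\setminus\mathbb Z$.

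Next, I would reduce admissibility of $L(\gamma,\mu)$ to admissibility of $L(\lambda)$. By Corollary \ref{Localization annihilator}, the twisted localization preserves the annihilator, so
\[
\operatorname{Ann}_{U(\mathfrak g)}L(\gamma,\mu)=\operatorname{Ann}_{U(\mathfrak g)}L(\lambda).
\]
Theorem \ref{Th;subsquence} then says that $L(\gamma,\mu)$ is admissible of level $k$ in $\mathbb{O}_{min}$ if and only if $\operatorname{Ann}_{U(\mathfrak g)}L(\lambda)=J_{\lambda'}$ for some $\lambda'\in[\overline{Pr}_k^{\mathbb{O}_{min}}]$, which by Proposition \ref{Pro:equiv-class-adm} is equivalent to $\lambda$ itself lying in $\overline{Pr}_k^{\mathbb{O}_{min}}$ (up to dot-action of $W$).

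Finally, I would substitute the explicit parametrization $[\overline{Pr}_k^{\mathbb{O}_{min}}]=\bigsqcup_{a=1}^{q-1}\{[\bar\lambda-\tfrac{ap}{q}\varpi_1]\}$ from Proposition \ref{Pro:minimal}, choosing as representative of the orbit the weight $\lambda=\bar\lambda-\tfrac{ap}{q}\varpi_1$ with $\lambda_i\in\mathbb Z_{\geq 0}$ and $\lambda_1+\cdots+\lambda_n<p-n$. Plugging $\langle\lambda,\alpha_1^{\vee}\rangle=\lambda_1-\tfrac{ap}{q}$ and $\langle\lambda,\alpha_j^{\vee}\rangle=\lambda_j$ for $j\geq 2$ into the weight relations of Theorem \ref{sl2 Induced} yields $\mu_1=\lambda_1-\tfrac{ap}{q}+2x$, $\mu_2=\lambda_2-x$, $\mu_j=\lambda_j$ for $j\geq 3$, and $\gamma=(\lambda_1-\tfrac{ap}{q}+1)^2$, while the twist condition $x-\langle\mu+\rho_{\pi},\alpha_1^{\vee}\rangle\notin\mathbb Z$ simplifies (using that $\lambda_1,1\in\mathbb Z$) to $x-\tfrac{ap}{q}\notin\mathbb Z$.

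The only subtle point I expect is the choice of the Weyl-orbit representative of $\lambda$: one needs the specific representative from Proposition \ref{Pro:minimal} for the formulas in Theorem \ref{sl2 Induced} to take the advertised form. Once that representative is fixed, the conditions on $x$, $a$, and $\lambda$ match exactly the statement, and the converse direction is immediate since each of the three ingredients above is an ``if and only if''.
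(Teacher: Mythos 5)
Your ``if'' direction is essentially the paper's argument and is sound: for $\lambda=\bar\lambda-\frac{ap}{q}\varpi_1$ one has $\left<\lambda+\rho_{\pi},\alpha_1^{\vee}\right>\notin\mathbb{Z}$ and $\left<\lambda+\rho_{\pi},\alpha_k^{\vee}\right>\in\mathbb{Z}_{>0}$ for $k\geq 2$, so $L(\lambda)$ is a dominant $\Gamma$-relation module (Corollary \ref{prop:relationbounded}), Theorem \ref{sl2 Induced} then really does give $L(\gamma,\mu)\simeq D_f^{x}L(\lambda)$, and Corollary \ref{Localization annihilator} together with Theorem \ref{Th;subsquence} and Proposition \ref{Pro:minimal} yield admissibility in $\mathbb{O}_{min}$.

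The gap is in the ``only if'' direction, which you declare immediate because ``each ingredient is an if and only if''. Theorem \ref{sl2 Induced} is an equivalence between \emph{relation-module} properties; the isomorphism $L(\gamma,\mu)\simeq D_f^{x}L(\lambda)$ is asserted only \emph{in that case}, i.e.\ after one knows that $L(\lambda)$ (equivalently $L(\gamma,\mu)$) is a relation module. If you only assume that $L(\gamma,\mu)$ is admissible of level $k$ in $\mathbb{O}_{min}$, nothing yet guarantees that the weight $\lambda$ you would read off from $\mu,\gamma,x$ satisfies the conditions of Theorem \ref{hw module}, so you cannot open the argument by invoking Theorem \ref{sl2 Induced}. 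This is exactly where Proposition \ref{prop-sl2} carries the load in the paper: since $M^{\mathfrak n}\simeq V$ is an admissible $\mathfrak{l}_\Sigma$-module of level $k_\beta$ by \cite[Theorem 2.12]{AFR17} (quoted just before Proposition \ref{prop-sl2}), that proposition pins down $\gamma=\bigl(\lambda_1-\frac{ap}{q}+1\bigr)^2$ and $\mu_1=\lambda_1-\frac{ap}{q}+2x$ with $\lambda_1\in\mathbb{Z}_{\geq 0}$, $x\notin\mathbb{Z}$ and $x-\frac{ap}{q}\notin\mathbb{Z}$; you instead use it only to conclude $x\notin\mathbb{Z}$, which already follows from density of $V$, and thereby miss its actual role. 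Finally, your reduction ``$\operatorname{Ann}_{U(\mathfrak g)}L(\lambda)=J_{\lambda'}$ if and only if $\lambda$ lies in $\overline{Pr}_k^{\mathbb{O}_{min}}$ up to the dot action'' is not an equivalence: Proposition \ref{Pro:equiv-class-adm} compares weights that already lie in $\overline{Pr}_k$, and a non-dominant $W$-conjugate can have a strictly smaller annihilator than $J_{\lambda'}$. Even granting $W$-conjugacy of central characters, what must be proved is that the \emph{specific} representative occurs, i.e.\ that $\mu_2+x$ and $\mu_j$ for $j\geq 3$ are non-negative integers with $\lambda_1+\cdots+\lambda_n<p-n$; that is precisely the ``subtle point'' you defer, and it is where the substance of the only-if direction lies, absorbed in the paper by combining Proposition \ref{prop-sl2} with Theorems \ref{Th;subsquence} and \ref{sl2 Induced}.
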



\

\begin{Th}\label{inducedsl2minimal} Let $\pi$ be a basis of the root system of $\mathfrak g$,
$\beta$  a positive root of $\mathfrak g$ (with respect to $\pi$).  Let $L_{\mathfrak b(\pi)}(\lambda)$ be an admissible simple $\mathfrak b(\pi)$-highest weight $\mathfrak g$-module  in the minimal  orbit,  such that  $\left<\lambda,\beta^{\vee} \right> \notin\mathbb{Z}$, and $f=f_{\beta}$. Denote by $A_{\pi, \beta}$ the set of all $x\in \mathbb{C}\setminus \mathbb{Z}$ such that   $x+\left<\lambda+\rho_{\pi},\beta^{\vee} \right> \notin\mathbb{Z}$.

 \begin{enumerate}[a)]
 \item The $\mathfrak g$-module $D^x_{f}L_{\mathfrak b(\pi)}(\lambda)$ is admissible    in the minimal  orbit for any $x\in A_{\pi, \beta}$;
  \item
    Modules $D^x_{f}L_{\mathfrak b(\pi)}(\lambda)$, where $\pi$ runs the sets of simple roots of $\mathfrak g$, $\beta$ runs positive roots with respect to $\pi$,   $x\in A_{\pi, \beta}$, $\left<\lambda,\beta^{\vee} \right> \notin\mathbb{Z}$ and $L_{\mathfrak b(\pi)}(\lambda)$ is admissible module in the minimal  orbit, exhaust all  simple $\mathfrak{sl}_2$-induced admissible modules in the minimal orbit.  All such modules have bounded weight multiplicities;
     \item  There exists a flag ${\mathcal F}$ such that $D^x_{f}L_{\mathfrak b(\pi)}(\lambda)$ is  $\Gamma_{\mathcal F}$-relation Gelfand--Tsetlin $\mathfrak g$-module. 
 \end{enumerate}
\end{Th}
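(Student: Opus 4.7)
The plan has three steps corresponding to the three parts of the theorem. For part (a), I would first argue that the hypothesis $\left<\lambda,\beta^{\vee}\right>\notin\mathbb{Z}$ forces $f_\beta$ to act injectively on $L_{\mathfrak b(\pi)}(\lambda)$: any singular vector killed by $f_\beta$ would require $\left<\lambda,\beta^{\vee}\right>\in\mathbb{Z}_{\geq 0}$ by standard highest weight theory, and the additional condition $x+\left<\lambda+\rho_\pi,\beta^{\vee}\right>\notin\mathbb{Z}$ guarantees that no $\mathbb{Z}$-shift of $x$ produces a singular parameter, so $D^x_f L_{\mathfrak b(\pi)}(\lambda)$ is defined and simple. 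Corollary \ref{Localization annihilator} (applied after possibly conjugating to a dominant frame) then yields
\[
\operatorname{Ann}_{U(\mathfrak g)} L_{\mathfrak b(\pi)}(\lambda) \;=\; \operatorname{Ann}_{U(\mathfrak g)} D^x_f L_{\mathfrak b(\pi)}(\lambda),
\]
and since $L_{\mathfrak b(\pi)}(\lambda)$ is admissible in $\mathbb{O}_{min}$, its annihilator equals $J_\mu$ for some $\mu\in\overline{\mathrm{Pr}}_k^{\mathbb{O}_{min}}$; therefore $D^x_f L_{\mathfrak b(\pi)}(\lambda)$ is also admissible in the minimal orbit.

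For part (b) I would start with an arbitrary simple $\mathfrak{sl}_2$-induced admissible $\mathfrak g$-module $M$ of level $k$ in $\mathbb{O}_{min}$, with Levi factor $\mathfrak a_\gamma\simeq\mathfrak{sl}_2+\mathfrak h$ based on some positive root $\gamma$. By \cite[Theorem 2.12]{AFR17}, the space $M^{\mathfrak n}$ of invariants under the nilradical is a simple dense $\mathfrak{sl}_2$-module admissible of level $k_\gamma$. Proposition \ref{prop-sl2} fixes the parameters $(\gamma,\mu)$ of this module, and after $W$-conjugation bringing $\gamma$ to the first simple root position, Corollary \ref{minimalinducedsl2} combined with Theorem \ref{sl2 Induced} identifies $M$ with a twisted localization $D^{x}_{f_\beta}L_{\mathfrak b(\pi)}(\lambda)$ for a suitable Borel $\mathfrak b(\pi)$, admissible weight $\lambda$ with $\left<\lambda,\beta^\vee\right>\notin\mathbb{Z}$, and twist $x\in A_{\pi,\beta}$. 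Boundedness is then immediate from Theorem \ref{sl2 Induced}(a) together with Theorem \ref{Pro: HWM}, which guarantees that $L_{\mathfrak b(\pi)}(\lambda)$ itself is bounded.

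For part (c), the idea is to first realize $L_{\mathfrak b(\pi)}(\lambda)$ as a relation Gelfand-Tsetlin module via Theorem \ref{Pro: HWM} and Corollary \ref{cor-outras Borel}, then transport the relation property across the twisted localization using Theorem \ref{twistedE21}. Explicitly, I would pick $w\in W$ so that $w^{-1}\beta$ is the first simple root of $w^{-1}\pi$, realize $L_{\mathfrak b(w^{-1}\pi)}(w^{-1}\lambda)\simeq L_{\mathfrak b(\pi)}(\lambda)$ as a $\Gamma$-relation module (using Lemma \ref{lem-key} and Corollary \ref{lem-diff-Borel} when necessary), apply Theorem \ref{twistedE21} to the element $E_{21}=f_{w^{-1}\beta}$ in that frame, and then twist back by $w$ to conclude that $D^x_{f_\beta} L_{\mathfrak b(\pi)}(\lambda)$ is a $\Gamma_{\mathcal F}$-relation Gelfand-Tsetlin module for the conjugated flag $\mathcal F=w\mathcal F_{st}$.

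The main obstacle is the bookkeeping in part (b): one must verify that every pair $(\pi,\beta)$ up to $W$-conjugation, together with every admissible choice of $(\gamma,\mu)$ in the minimal orbit, is realized by some admissible highest weight $\lambda$ and twist parameter $x\in A_{\pi,\beta}$, so that no simple $\mathfrak{sl}_2$-induced admissible module in $\mathbb{O}_{min}$ is omitted. Matching the Weyl chamber specifying $\pi$ with the constraints $\left<\lambda,\beta^\vee\right>\notin\mathbb{Z}$ and $x\in\mathbb{C}\setminus\mathbb{Z}$ against the explicit parametrization $\lambda_a=\lambda-\frac{ap}{q}\varpi_1$ from Proposition \ref{Pro:minimal} requires a careful case analysis, and ensuring the resulting twist indeed yields an admissible module in $\mathbb{O}_{min}$ relies on the annihilator equality established in part (a).
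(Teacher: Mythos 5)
Your overall route is the paper's: part (b) through Theorem \ref{sl2 Induced} together with Proposition \ref{prop-sl2} and Corollary \ref{minimalinducedsl2}, parts (a) and (c) through a change of frame by the Weyl group followed by Theorem \ref{twistedE21}, and admissibility via preservation of annihilators (Proposition \ref{loc-ann}, Corollary \ref{Localization annihilator}, using the dominant representative of the dot-orbit as in Proposition \ref{Pro:equiv-class-adm}). Your part (a) argued directly through annihilators is fine and is essentially what the paper relies on via Corollary \ref{Localization annihilator} and Corollary \ref{cor-singular}.

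There are, however, two places where you assert what the paper has to prove. First, simplicity of $D^x_f L_{\mathfrak b(\pi)}(\lambda)$ does not follow from the remark that no integer shift of $x$ hits a singular parameter; in the paper it is extracted from the tableaux machinery, namely the maximality criterion of Theorem \ref{thm-irr} inside the proof of Theorem \ref{sl2 Induced} (see also Corollary \ref{cor-twisted-isom}), and without it neither (a) nor the identification in (b) is complete. Second, and more seriously, in (c) the instruction ``pick $w\in W$ so that $w^{-1}\beta$ is the first simple root of $w^{-1}\pi$'' is not sufficient: Theorem \ref{twistedE21} applies only if $L(\lambda)$ is genuinely a relation module with respect to the conjugated flag, and for a generic such $w$ this fails, since the relation property in the new frame is governed by Theorem \ref{hw module} and Corollary \ref{prop:relationbounded} applied to the transformed weight. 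Producing a suitable flag is exactly the content of the paper's proof: when $\lambda$ has a single non-integral simple coordinate one must take the specific basis $\pi'=\{\beta,-\alpha_t,\dots,-\alpha_2,\alpha_2+\dots+\alpha_{t+1},\alpha_{t+2},\dots,\alpha_n\}$ and verify Corollary \ref{cor-diff-Borel-same}; when $\lambda$ has two consecutive non-integral coordinates one reduces, via a minimal $w$ and the Dynkin-diagram symmetry, to the case $\beta$ simple and invokes Lemma \ref{lem-key} and Corollary \ref{lem-diff-Borel}. So the case analysis you defer as ``bookkeeping'' for (b) is in fact the core of the proof of (a) and (c), not a routine verification, and your plan leaves it open.
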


\begin{proof} Let $\pi=\{\alpha_1, \ldots, \alpha_n\}$.
First we prove a).
 Let $\beta=\alpha_r+\ldots +\alpha_t$ for some consecutive simple roots
$\alpha_j$, $j=r, \ldots, t$. Since $\lambda$ is admissible in the minimal orbit then we have two possibilities: either there exists only one simple root $i$ such that  $\left<\lambda,\alpha_i^{\vee} \right> \notin\mathbb{Z}$, or there are only two such roots which are consecutive. Assume the first case. Then either $i=r=1$ or $i=t=n$. Without loss of generality we may assume that $i=1$ (if $i=n$ then apply symmetry of the root system), and hence  $\left<\lambda+\rho_{\pi},\alpha_j^{\vee} \right> \in\mathbb{Z}_{> 0}$, $j=2, \ldots, t$. 
Take the following basis of the root system
$$\pi'=\{\beta,  -\alpha_t, \ldots, -\alpha_{2},  \alpha_{2}+\ldots +\alpha_{t+1}, \alpha_{t+2},
\ldots, \alpha_{n}\}.$$
Let $w\in W$ be such that $w\pi=\pi'$. Then $w$ satisfies Corollary  \ref{cor-diff-Borel-same}, and 
  $L(\lambda)\simeq L_{\mathfrak b(w\pi)}(w\lambda)$  is a $w\Gamma$-relation Gelfand--Tsetlin module. Then the statement follows from   Theorem \ref{twistedE21}.
  
  Consider now the second case. 
   Assume that $\left<\lambda,\alpha_i^{\vee} \right> \notin\mathbb{Z}$ for $i=k,k+1$ for some $k$. If $\beta=\alpha_k$ then the statement follows from Corollary \ref{lem-diff-Borel}, c). If $\beta=\alpha_{k+1}$ then apply the symmetry of the Dynkin diagram and  Corollary \ref{lem-diff-Borel}, c).
    Let $\beta=\alpha_{r}+\ldots +\alpha_k$ for some $1\leq r\leq k-1$. Take the minimal $w\in W$ such that $w\pi$ contains $\beta$ and 
  $-\alpha_{r}, \ldots, -\alpha_{k-1}$ (such $w$ clearly exists). Then  $L(\lambda)\simeq L_{\mathfrak b(w\pi)}(w\lambda)$. Hence, the problem reduces to the case $\beta=\alpha_k$ which was argued above. If $\beta=\alpha_{k+1}+\ldots +\alpha_t$ for some $k+1\leq t\leq n$ then the statement follows from the symmetry of the Dynkin diagram. Now b) and c) follow from a) and Theorem \ref{sl2 Induced}.


\end{proof}

We note that Theorem \ref{inducedsl2minimal} was initially proved for $\mathfrak g=\mathfrak{sl}(3)$ in \cite[Theorem 5.6]{AFR17}.

Let $\beta$ be a root of $\mathfrak g$ and $\pi$ be a basis of the root system containing $\beta$ as the first root (such $\pi$ always exists by the  conjugation by the Weyl group). 
Let ${\mathfrak p}=\mathfrak{a}_{\beta}\oplus \mathfrak{n}$ be a parabolic subalgebra of $\mathfrak g$ containing $\mathfrak b(\pi)$ with the Levi factor $\mathfrak{a}_{\beta}\simeq\mathfrak{sl}_2+\mathfrak h$ based on the root $\beta$. Let $V=V(\gamma_{\beta}, \mu)$, $\gamma_{\beta}\in \mathbb C$,  $\mu=\mu_1 \varpi_1+\ldots + \mu_n \varpi_n \in \mathfrak h^*$, where $\gamma_{\beta}$ is the eigenvalue of the Casimir element of $\mathfrak{a}_{\beta}$ and $\left<\varpi_i, \beta_j^{\vee} \right> = \delta_{ij}$, $i=1, \ldots, n$. 
  
\

  \begin{Co} The module
 $L_{\pi, \beta}(\gamma_{\beta}, \mu):=L_{\mathfrak p}^{\mathfrak g}(V)$    
 is  admissible   in the minimal  orbit 
   if and only if   $\left<\mu-\lambda,\beta_1^{\vee} \right>=2x$, $\left<\mu-\lambda,\beta_2^{\vee} \right>=-x$, $\left<\mu-\lambda,\beta_i^{\vee} \right>=0$ for each $i=3,\dots, n$, and $\gamma_{\beta}= \left<\lambda+\rho_{\mathfrak{b}},\beta_1^{\vee} \right>^2$, with  $\left<\lambda+\rho_{\pi},\beta_i^{\vee} \right> \in \mathbb{Z}_{> 0}$  for all $i\in \{2,\ldots,n\} $,   $x\in  \mathbb{C}\setminus{\mathbb{Z}}$,  $x+\left<\lambda+\rho_{\pi},\beta_1^{\vee} \right> \notin \mathbb{Z}$ and $\left<\lambda+\frac{ap}{q}\varpi_1+\rho_{\pi},\beta_1^{\vee} \right> \in \mathbb{Z}_{> 0}$, $\left<\lambda+\frac{ap}{q}\varpi_1+ \rho_{\pi},\beta_{1,n}^{\vee} \right> < p$ for some  $a\in \{1,2,\dots ,q-1\}$.
  
\end{Co}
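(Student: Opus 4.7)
The plan is to read this as a direct repackaging of Theorem \ref{inducedsl2minimal} together with Theorem \ref{sl2 Induced} and Proposition \ref{Pro:minimal}, once the simple root $\beta$ is placed at the first position of $\pi$ (which is assumed). First I would invoke Theorem \ref{sl2 Induced} with the basis $\pi = \{\beta_1 = \beta, \beta_2, \ldots, \beta_n\}$ and $f = f_\beta$: the equivalence there says that $L_{\pi,\beta}(\gamma_\beta,\mu) \simeq D^x_{f} L_{\mathfrak b(\pi)}(\lambda)$ precisely when the numerical identities $\left<\mu-\lambda,\beta_1^\vee\right>=2x$, $\left<\mu-\lambda,\beta_2^\vee\right>=-x$, $\left<\mu-\lambda,\beta_i^\vee\right>=0$ for $i\geq 3$, and $\gamma_\beta = \left<\lambda+\rho_\pi,\beta_1^\vee\right>^2$ hold, together with $x\in\mathbb{C}\setminus\mathbb{Z}$ and $x+\left<\lambda+\rho_\pi,\beta_1^\vee\right>\notin\mathbb{Z}$; the requirement that $L_{\mathfrak b(\pi)}(\lambda)$ be a highest weight $\Gamma$-relation module in the form needed by Theorem \ref{sl2 Induced} forces (via Corollary \ref{prop:relationbounded}, b)) the conditions $\left<\lambda+\rho_\pi,\beta_i^\vee\right>\in\mathbb{Z}_{>0}$ for all $i=2,\ldots,n$ and $\left<\lambda,\beta_1^\vee\right>\notin\mathbb{Z}$.

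Next I would pass from these parameter conditions to admissibility. By Proposition \ref{loc-ann} and Corollary \ref{Localization annihilator}, the annihilator of $D^x_f L_{\mathfrak b(\pi)}(\lambda)$ coincides with the annihilator of $L_{\mathfrak b(\pi)}(\lambda)$ whenever $f$ acts injectively, so $L_{\pi,\beta}(\gamma_\beta,\mu)$ lies in $\mathbb{O}_{\min}$ if and only if $L_{\mathfrak b(\pi)}(\lambda)$ does. By Proposition \ref{Pro:minimal}, the latter is equivalent to the existence of $a \in \{1,\ldots,q-1\}$ and $\bar\nu \in \widehat{P}_+^{p-n-1}$ such that $\lambda$ is Weyl-group dot-equivalent to $\bar\nu - \frac{ap}{q}\varpi_1$.

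At this point the core translation step is to verify that the constraints $\left<\lambda+\rho_\pi,\beta_i^\vee\right>\in\mathbb{Z}_{>0}$ for $i\geq 2$ together with $\left<\lambda,\beta_1^\vee\right>\notin\mathbb{Z}$ force $\lambda$ itself (not just its dot-orbit class) to have the form $\bar\nu - \frac{ap}{q}\varpi_1$: indeed no nontrivial element of $W$ preserves both the non-integral first coordinate and the strict dominance on $\beta_2,\ldots,\beta_n$. Once this representative is fixed, setting $\bar\nu_i := \left<\bar\nu,\beta_i^\vee\right>$ the requirement $\bar\nu \in \widehat{P}_+^{p-n-1}$ unpacks as $\bar\nu_1 = \left<\lambda+\tfrac{ap}{q}\varpi_1+\rho_\pi,\beta_1^\vee\right>-1 \in\mathbb{Z}_{\geq 0}$, together with $\bar\nu_i \in \mathbb{Z}_{\geq 0}$ for $i\geq 2$ (already supplied), and the level condition $\bar\nu_1+\cdots+\bar\nu_n < p-n$, which reads $\left<\lambda+\tfrac{ap}{q}\varpi_1+\rho_\pi,\beta_{1,n}^\vee\right> < p$. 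These are precisely the inequalities appearing in the corollary.

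The main obstacle I expect is the Weyl-group bookkeeping in paragraph three: different dot-orbit representatives of the same admissible class satisfy different integrality patterns, so I must check carefully that the representative produced by Theorem \ref{sl2 Induced} (the one for which $L(\lambda)$ is a relation module with respect to the standard Gelfand--Tsetlin flag of $\pi$ and with $f_\beta$ acting injectively) is exactly $\bar\nu-\tfrac{ap}{q}\varpi_1$. This can be done using Proposition \ref{Pro:equiv-class-adm} to fix the class uniquely and then observing that the stipulated integrality/non-integrality pattern leaves only this representative as admissible.
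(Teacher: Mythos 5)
Your ``if'' direction is essentially the paper's intended assembly (transfer Theorem \ref{sl2 Induced} and Corollary \ref{prop:relationbounded} to the basis $\pi$, then use Corollary \ref{Localization annihilator} and Proposition \ref{Pro:minimal}), but the ``only if'' direction has a genuine gap. You claim in your first paragraph that the hypotheses of Theorem \ref{sl2 Induced} force, via Corollary \ref{prop:relationbounded} b), the conditions $\left<\lambda+\rho_{\pi},\beta_i^{\vee}\right>\in\mathbb{Z}_{>0}$ for $i\geq 2$. That is false: Theorem \ref{sl2 Induced} only requires $L(\lambda)$ to be a relation highest weight module, and by Theorem \ref{hw module} relation modules realize many other integrality patterns (Corollary \ref{prop:relationbounded} lists five distinct cases, and case b) is merely one sufficient pattern, not a consequence). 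So when you start from an admissible $L_{\pi,\beta}(\gamma_\beta,\mu)$ in the minimal orbit, you never actually derive the integrality conditions on $\lambda$; your paragraph three then uses exactly these undeduced conditions to pin down the dot-orbit representative, so the argument is circular. Relatedly, your paragraph two is too coarse on two counts: Proposition \ref{Pro:minimal} describes $[\overline{Pr}_k^{\mathbb{O}_{min}}]$, and ``admissible in the minimal orbit'' means $\operatorname{Ann}_{U(\mathfrak g)}L(\lambda)=J_{\Lambda}$, not merely that $\lambda$ is dot-conjugate to some $\bar\nu-\frac{ap}{q}\varpi_1$ (for $\mathfrak{sl}_3$, $s_2\cdot\Lambda_1$ is dot-conjugate but $L(s_2\cdot\Lambda_1)$ is not admissible, cf.\ Example \ref{ExMinOrbit}); and the equality of annihilators you invoke is Corollary \ref{Localization annihilator}, which requires $\lambda$ dominant --- Proposition \ref{loc-ann} alone gives only inclusions (compare Theorem \ref{sl2 Induced} c)), and dominance of $\lambda$ is part of what the ``only if'' direction must establish.

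The way the paper closes this gap is different from your route: for the ``only if'' direction one reduces to the Levi factor, using that $M^{\mathfrak n_\Sigma}\simeq V$ is an admissible $\mathfrak{sl}_2+\mathfrak h$-module of level $k_\beta$ (the cited \cite[Theorem 2.12]{AFR17}), so Proposition \ref{prop-sl2} pins down $\gamma_\beta$ and the $\beta_1$-component of $\mu$ together with $x\notin\mathbb{Z}$; the remaining integrality conditions on $\lambda$ then come from the classification of admissible highest weight modules in the minimal orbit (Theorem \ref{Pro: HWM}, i.e.\ Proposition \ref{Pro:minimal} transported by the Weyl group to $\pi$) combined with the exhaustion statement of Theorem \ref{inducedsl2minimal} b) and the identification $L(\gamma_\beta,\mu)\simeq D^x_{f_\beta}L_{\mathfrak b(\pi)}(\lambda)$ from Theorem \ref{sl2 Induced}; this is exactly how Corollary \ref{minimalinducedsl2} is obtained in the standard case. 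If you replace your ``relation module forces pattern b)'' step by this Levi reduction plus the highest-weight classification, and restrict the use of Corollary \ref{Localization annihilator} to the direction where $\lambda$ is already known to be dominant, your outline becomes a correct proof.
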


\

\begin{Rem}
Theorem \ref{inducedsl2minimal} provides an algorithm how to list all simple $\mathfrak{sl}_2$-induced admissible modules in the minimal orbit:
\begin{itemize}
\item Consider all possible Borel subalgebras of $\mathfrak g$ containing $\mathfrak h$;
\item For each Borel subalgebra $\mathfrak b(\pi)$ describe $\lambda\in \mathfrak h^*$ for which 
$L_{\mathfrak b}(\lambda)$ is admissible  using the Arakawa's classification for the standard Borel and applying the Weyl group;
\item Choose any positive  (with respect to $\pi$) root $\beta$ such that $\left<\lambda,\beta^{\vee} \right> \notin\mathbb{Z}$, and define $ D^x_{f}L_{\mathfrak b(\pi)}(\lambda)$ for any $x\in \mathbb{C}\setminus \mathbb{Z}$ such that  $x+\left<\lambda+\rho_{\pi},\beta^{\vee} \right> \notin\mathbb{Z}$, where $f=f_{\beta}$.
\end{itemize} 
Obtained modules exhaust all simple $\mathfrak{sl}_2$-induced admissible modules in the minimal orbit.
Moreover, the proof of Theorem \ref{inducedsl2minimal} explains how to define  the flag  ${\mathcal F}$ for which $L_{\pi, \beta}(\gamma_{\beta}, \mu)$ is a $\Gamma_{\mathcal F}$-relation Gelfand--Tsetlin $\mathfrak g$-module, and hence to obtain explicit tableaux realization for all such modules. 
\end{Rem}


\begin{thebibliography}{FKW92}
\bibliographystyle{alpha}
\bibitem[Ada16]{Adamovic2016} D. Adamovi\'c, \emph{A realization of certain modules for the $N=4$ superconformal algebra and the affine Lie algebra $A_2^{(1)}$}, Transform. Groups, \textbf{21} (2016), 299--327.
\bibitem[AM95]{AM95} D. Adamovi\'c and A. Milas, \emph{Vertex operator algebras associated to modular invariant representations for $A_1^{(1)}$}, Math. Res. Lett., \textbf{2}(5) (1995), 563--575.
\bibitem[AM07]{AM07} D. Adamovi\'c and A. Milas, \emph{Logarithmic intertwining operators and $\mathcal{W}(2,2p-1)$-algebras}, J. Math. Phys., \textbf{48} (2007), 073503.
\bibitem[AP08]{AP08} D. Adamovi\'c and O. Perse, \emph{Representations of certain non-rational vertex operator algebras of affine type}, J. Algebra,
\textbf{319} (2008), 2434--2450.
\bibitem[AM09]{AM09} D. Adamovi\'c and A. Milas, \emph{The $N = 1$ Triplet Vertex Operator Superalgebras}, Comm. Math. Phys., \textbf{288} (2009), 225--270.
\bibitem[Ara15]{Ara15}  T. Arakawa, \emph{Rationality of $W$-algebras: principal nilpotent cases}, Ann. of Math., \textbf{182}(2) (2015), 565--694.
\bibitem[Ara16]{Ara16}
T. Arakawa.
\newblock Rationality of admissible affine vertex algebras in the category
  {$\mathcal{O}$}.
\newblock {\em Duke Math. J.}, \textbf{165}(1):67--93, 2016.
 \bibitem[AE19]{AE19} T. Arakawa and J. Ekeren, \emph{Rationality and Fusion Rules of Exceptional W-Algebras}, arXiv:1905.11473.
\bibitem[AFR17]{AFR17}  T. Arakawa, V. Futorny, and L. E. Ramirez, \emph{Weight representations of admissible affine vertex algebras}, Comm. Math. Phys., \textbf{353}(3) (2017), 1151--1178.
\bibitem[ACR18]{Auger-Creutzig-Ridout2018}  J. Auger, T. Creutzig, and D. Ridout, \emph{Modularity of logarithmic parafermion vertex algebras}, Lett. Math. Phys., \textbf{108} (2018), 2543--2587.
\bibitem[CR12]{CR1} T. Creutzig and D. Ridout, \emph{Modular Data and Verlinde Formulae for Fractional Level WZW Models I}, Nuclear Phys. B., \textbf{865}, No. 1  (2012), 83--114.
\bibitem[CR13]{CR2}  T. Creutzig and D. Ridout, \emph{Modular Data and Verlinde Formulae for Fractional Level WZW Models II},  Nuclear Phys. B., \textbf{875}, No. 2 (2013), 423--458.
\bibitem[Deo80]{Deo80}  V. Deodhar, \emph{On a construction of representations and a problem of Enright}, Invent. Math., \textbf{57}(2) (1980), 101--118.
\bibitem[Dix77]{Dix77} J Dixmier. \emph{Enveloping Algebras}, Grad. Stud. Math., \textbf{11} (1996).
\bibitem[DFO94]{DFO94}  Y. Drozd, V. Futorny, and S. Ovsienko, \emph{Harish-Chandra subalgebras and Gelfand-Zetlin modules}, Finite-dimensional algebras and related topics (Ottawa, ON, 1992) NATO Adv. Sci. Inst. Ser. C Math. Phys. Sci., Kluwer Acad. Publ., Dordrecht, \textbf{424}  (1994), 79--93.
\bibitem[Duf77]{Duf77}  M. Duflo, \emph{Sur la classification des id\'eaux primitifs dans l'alg\`ebre enveloppante d'une alg\`ebre de Lie semi-simple}, Ann. of Math., \textbf{105}(1) (1977), 107--120.
 \bibitem[FKR21]{FKR} Z. Fehily, K. Kawasetsu and D. Ridout, \emph{Classifying relaxed highest-weight modules for admissible-level Bershadsky-Polyakov algebras}, Comm. Math. Phys. (2021).
\bibitem[FST88]{FST1998} B. Feigin, A. Semikhatov, and  I. Tipunin, \emph{Equivalence between chain categories of representations of affine $\mathfrak{sl}(2)$ and $N = 2$ superconformal algebras}, J.  Math. Phys., \textbf{39} (1998), 3865--3905. 
\bibitem[Fer90]{Fer90}  S. Fernando, \emph{Lie algebra modules with finite dimensional weight spaces, I}, Trans. Amer. Math. Soc., \textbf{322}(2) (1990), 757--781.
\bibitem[FBZ04]{FB04}  E. Frenkel and D. Ben-Zvi, \emph{Vertex algebras and algebraic curves},  Math. Surveys Monogr., \textbf{88} (2004).
\bibitem[FKW92]{FKW92} E. Frenkel, V. Kac, and  M.  Wakimoto, \emph{Characters and fusion rules for $W$-algebras via quantized Drinfeld-Sokolov reduction}, Comm. Math. Phys., \textbf{147}(2) (1992), 295--328.
\bibitem[FGR16]{FGR16} V. Futorny, D. Grantcharov, and L. E. Ramirez, \emph{Singular Gelfand–Tsetlin modules for $\mathfrak{gl}(n)$}, Adv. Math., \textbf{290} (2016), 453--482.
\bibitem[FK20]{FK20}
V. Futorny, L. K\v{r}i\v{z}ka,  \emph{Positive energy representations of affine vertex algebras}, Comm. Math. Phys., (2020), 1432--0916.
\bibitem[FRZ19]{FRZ19}  V. Futorny, L. E. Ramirez, and J. Zhang, \emph{Combinatorial construction of Gelfand--Tsetlin modules for $\mathfrak{gl}_n$}, Adv. Math., \textbf{343} (2019), 681--711.
\bibitem[GT50]{GT50} I. Gelfand and M. Tsetlin, \emph{Finite-dimensional representations of the group of unimodular matrices}, Dokl. Akad. Nauk USSR, \textbf{71} (1950), 825--828.
\bibitem[GP18]{GP18}  D. Grantcharov and I. Penkov, \emph{Simple bounded weight modules of $\mathfrak{sl}(\infty)$, $\mathfrak{o}(\infty)$, $\mathfrak{sp}(\infty)$}, arXiv:1807.01899.
\bibitem[Jos79]{Jos79}  A. Joseph, \emph{Dixmier’s problem for Verma and principal series submodules}, J.  Lond. Math. Soc., \textbf{20}(2) (1979), 193--204.
\bibitem[Jos85]{Jos85} A. Joseph, \emph{On the associated variety of a primitive ideal}, J. Algebra, \textbf{93}(2) (1985), 509--523.
\bibitem[Kac98]{Kac98}  V. Kac, \emph{Vertex algebras for beginners}, Univ. Lecture Ser., \textbf{10}  (1998).
\bibitem[KW89]{KacWak89} V. Kac and M. Wakimoto, \emph{Classification of modular invariant representations of affine algebras}, In  \textit{Infinite-dimensional Lie algebras and groups (Luminy-Marseille, 1988)},  Adv. Ser. Math. Phys.,  \textbf{7} (1989), 138--177.
\bibitem[KW08]{KacWak08}  V. Kac and M. Wakimoto, \emph{On rationality of $W$-algebras}, Transform. Groups, \textbf{13}(3) (2008), 671--713.
\bibitem[KR19a]{KR19a} K. Kawasetsu and D. Ridout, \emph{Relaxed highest weight modules I: Rank $1$ cases}, Comm. Math. Phys., \textbf{368} (2019), 627--663.
\bibitem[KR19b]{KR19b} K. Kawasetsu and D. Ridout, \emph{Relaxed highest weight modules II: Classification for affine vertex algebras}, arXiv:1906.02935.
\bibitem[Mat00]{Mat00}  O. Mathieu, \emph{Classification of irreducible weight modules}, Ann. Inst. Fourier, \textbf{50}(2) (2000), 537--592.
\bibitem[Maz98]{Maz98} V. Mazorchuk, \emph{Tableaux realization of generalized Verma modules}, Canad. J. Math., \textbf{50}(4) (1998), 816--828.
\bibitem[Maz03]{Maz03}  V. Mazorchuk, \emph{Quantum deformation and tableaux realization of simple dense $\mathfrak{gl}(n,\mathbb{C})$-modules}, J. Algebra Appl., \textbf{2}(1) (2003), 1--20.
\bibitem[Rid09]{R}  D. Ridout, \emph{$\widehat{sl}(2)_{-\frac{1}{2}}:$ A case study}, Nuclear Phys. B., \textbf{814}, No. 3 (2009), 485--521.
\bibitem[RW15]{RW}  D. Ridout and S. Wood, \emph{Bosonic Ghosts at c = 2 as a Logarithmic CFT}. Lett. Math. Phys., \textbf{105} (2015), 279--307. 
\bibitem[Wan99]{Wan99}  W. Wang, \emph{Dimension of a minimal nilpotent orbit}, Proc. Amer. Math. Soc., \textbf{127}(3) (1999), 935--936.
\bibitem[Zhe73]{Zel73}  D. Zhelobenko, \emph{Compact Lie groups and their representations}, Trans. Amer. Math. Soc., \textbf{40} (1973). 
\bibitem[Zhu96]{Zh}  Y. Zhu, \emph{Modular invariance of characters of vertex operator algebras}, J. Amer. Math. Soc., \textbf{9}(1) (1996), 237--302.
\end{thebibliography}
\end{document}